\newcommand{\RR}{\mathbb{R}}
\newcommand{\kk}{\mathbb{k}}
\newcommand{\bmm}{\mathbb{M}}
\newcommand{\NN}{\normalfont\mathbb{N}}
\newcommand{\ZZ}{{\normalfont\mathbb{Z}}}
\newcommand{\PP}{{\normalfont\mathbb{P}}}
\newcommand{\dd}{{\normalfont\mathbf{d}}}
\newcommand{\mm}{{\normalfont\mathfrak{m}}}
\newcommand{\SSS}{\mathcal{S}}
\newcommand{\pp}{{\normalfont\mathfrak{p}}}
\newcommand{\bn}{{\normalfont\mathbf{n}}}
\newcommand{\bx}{{\normalfont\mathbf{x}}}
\newcommand{\bm}{{\normalfont\mathbf{m}}}
\newcommand{\ttt}{{\normalfont\mathbf{t}}}
\newcommand{\fJ}{{\mathfrak{J}}}
\newcommand{\conv}{\normalfont\text{conv}}
\newcommand{\Quot}{\normalfont\text{Quot}}
\newcommand{\II}{\mathbb{I}}
\newcommand{\Rees}{\mathcal{R}}
\newcommand{\ee}{{\normalfont\mathbf{e}}}
\newcommand{\Pic}{{\normalfont\text{Pic}}}
\newcommand{\JJ}{\mathbb{J}}
\newcommand{\bJ}{\mathbf{J}}
\newcommand{\bK}{\mathbf{K}}
\newcommand{\OO}{\mathcal{O}}
\newcommand{\FF}{\mathbb{F}}
\newcommand{\HH}{\normalfont\text{H}}
\newcommand{\Spec}{\normalfont\text{Spec}}
\newcommand{\Vol}{{\normalfont\text{Vol}}}
\newcommand{\MV}{{\normalfont\text{MV}}}
\newcommand{\ind}{{\normalfont\text{ind}}}
\newcommand{\Con}{{\normalfont\text{Con}}}
\def\f0{\mathbf{0}}
\def\fn{\mathbf{n}}
\def\fm{\mathbf{m}}
\def\ft{\mathbf{t}}
\def\fd{\mathbf{d}}
\def\ft{\mathbf{t}}
\def\ls{\leqslant}
\def\gs{\geqslant}
\def\*{{\color{red}\blacksquare}}
\newtheorem{theorem}{Theorem}[section]
\newtheorem{headthm}{Theorem}
\newaliascnt{headcor}{headthm}
\newaliascnt{headthmdef}{headthm}
\newaliascnt{headconj}{headthm}
\newaliascnt{corollary}{theorem}
\newtheorem{corollary}[corollary]{Corollary}
\newaliascnt{lemma}{theorem}
\newtheorem{lemma}[lemma]{Lemma}
\newaliascnt{conjecture}{theorem}
\newaliascnt{proposition}{theorem}
\newtheorem{proposition}[proposition]{Proposition}
\theoremstyle{definition}
\newaliascnt{definition}{theorem}
\newtheorem{definition}[definition]{Definition}
\newaliascnt{notation}{theorem}
\newaliascnt{example}{theorem}
\newtheorem{example}[example]{Example}
\newaliascnt{examples}{theorem}
\newaliascnt{remark}{theorem}
\newtheorem{remark}[remark]{Remark}
\newaliascnt{problem}{theorem}
\newaliascnt{question}{theorem}
\newaliascnt{convention}{theorem}
\newaliascnt{construction}{theorem}
\newaliascnt{setup}{theorem}
\newtheorem{setup}[setup]{Setup}
\newaliascnt{algorithm}{theorem}
\newaliascnt{observation}{theorem}
\newaliascnt{defprop}{theorem}
\def\equationautorefname~#1\null{(#1)\null}
\def\sectionautorefname~#1\null{Section #1\null}
\def\subsectionautorefname~#1\null{\S #1\null}
\def\trdeg{{\rm trdeg}}
\begin{document}
	
	\title{Multigraded algebras and multigraded linear series}

	\author{Yairon Cid-Ruiz}
	\address[Cid-Ruiz]{Department of Mathematics, North Carolina State University, Raleigh, NC 27695, USA}
	\email{ycidrui@ncsu.edu}
	
	\author{Fatemeh Mohammadi}
	\address[Mohammadi]{Department of Mathematics, KU Leuven, Celestijnenlaan 200B, Leuven, Belgium  and Department of Computer Science, KU Leuven, Celestijnenlaan 200A, 3001 Leuven, Belgium}
	\email{fatemeh.mohammadi@kuleuven.be}

	\author{Leonid Monin}
	\address[Monin]
	{Institute of Mathematics, Ecole Polytechnique F\'ed\'erale de Lausanne, Switzerland}
	\email{leonid.monin@epfl.ch}
	
	\keywords{multigraded algebras, multigraded linear series, Newton-Okounkov bodies, Krull dimension, mixed multiplicities, Hilbert function}
	\subjclass[2010]{Primary 13H15, 14M25; Secondary 13A18, 52B20}
	
	\date{\today}
	
	\begin{abstract}
	This paper is devoted to the study of multigraded algebras and multigraded linear series. 
	For an $\NN^s$-graded algebra $A$, we define and study its volume function $F_A:\NN_+^s\to \RR$, which computes the asymptotics of the Hilbert function of $A$. 
	We relate the volume function $F_A$ to the volume of the fibers of the global Newton-Okounkov body $\Delta(A)$ of $A$.
	Unlike the classical case of standard multigraded algebras, the volume function $F_A$ is not a polynomial in general. However, in the case when the algebra $A$ has a decomposable grading, we show that the volume function $F_A$ is a polynomial with non-negative coefficients. 
	We then define mixed multiplicities in this case and provide a full characterization for their positivity. 
	Furthermore, we apply our results on multigraded algebras to multigraded linear series.
	
	Our work recovers and unifies recent developments on mixed multiplicities. 
	In particular, we recover results on the existence of mixed multiplicities for (not necessarily Noetherian) graded families of ideals and on the positivity of the multidegrees of multiprojective varieties.
	\end{abstract}
	
	\maketitle
	
	{
		\hypersetup{linkcolor=black}
		\tableofcontents
	}

	\section{Introduction}
	
	The main goal of this paper is to study and extend the current theory of \emph{graded algebras of almost integral type} and \emph{graded linear series} to \emph{multigraded algebras of almost integral type} and \emph{multigraded linear series}, respectively. The asymptotic behavior of graded algebras of almost integral type and graded linear series was extensively studied in  the foundational papers of Kaveh and Khovanskii \cite{KAVEH_KHOVANSKII} and of Lazarsfeld and Musta\c{t}\u{a} \cite{lazarsfeld09}. 
	Following ideas from seminal works of Okounkov \cite{Ok1, Ok2}, the authors of \cite{KAVEH_KHOVANSKII} and \cite{lazarsfeld09} associated a convex body $\Delta(A)$ to a graded algebra $A$ equipped with a valuation with one-dimensional leaves. The convex body $\Delta(A)$ is called the \emph{Newton-Okounkov body} of the algebra $A$. 
	One of the main results of \cite{KAVEH_KHOVANSKII, lazarsfeld09} relates the asymptotic growth of the Hilbert function of a graded algebra of almost integral type with the corresponding Newton-Okounkov body. 
	
	More precisely, let $\kk$ be an algebraically closed field and $\FF$ be a field containing $\kk$. 
	Recall that a graded $\kk$-subalgebra $A\subset \FF[t]$ of the polynomial ring in one variable is of \emph{integral type} if $A$ is finitely generated over $\kk$ and is a finitely generated module over the subalgebra generated by $[A]_1$. 
	A graded $\kk$-algebra $A \subset \FF[t]$ is of \emph{almost integral type} if $A \subset B \subset \FF[t]$, where $B$ is a graded $\kk$-algebra of integral type. Then, one has the following theorem.
	
	\begin{theorem}[\cite{KAVEH_KHOVANSKII, lazarsfeld09}]\label{thm-kkml}
	Let $A\subset \FF[t]$ be a graded $\kk$-algebra of almost integral type, and let $q=\dim_\RR(\Delta(A))$.
	Then the Hilbert function of $A$ has a polynomial growth. 
	Moreover, the main term of asymptotics has degree $q$ and its coefficient is given by the volume of $\Delta(A)$.
	\end{theorem}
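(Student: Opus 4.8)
The plan is to translate the statement into a combinatorial estimate for the value semigroup of $A$ and then to prove that estimate by sandwiching the semigroup between finitely generated objects. First I would fix the valuation $v$ with one-dimensional leaves used to construct $\Delta(A)$, say with values in $\ZZ^n$ equipped with a total group ordering, and form the graded value semigroup
\[
S(A)=\big\{(v(f),m):0\neq f\in[A]_m\big\}\subset\ZZ^n\times\NN\subset\ZZ^{n+1}.
\]
The one-dimensional leaves hypothesis identifies, for each $m$, a $\kk$-basis of $[A]_m$ with the height-$m$ slice $S(A)_m=\{\alpha:(\alpha,m)\in S(A)\}$, so that $\dim_\kk[A]_m=\#S(A)_m$, while by construction $\Delta(A)$ is the height-$1$ slice of the closed convex cone $\Con(S(A))$. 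In these terms the theorem asks that $\#S(A)_m$ be asymptotic to $\Vol(\Delta(A))\,m^q$, where $q=\dim_\RR\Delta(A)$ and $\Vol$ is the Lebesgue measure on the affine span of $\Delta(A)$ normalized by the lattice $M_0=G(S(A))\cap(\ZZ^n\times\{0\})$, with $G(S(A))\subseteq\ZZ^{n+1}$ the subgroup generated by $S(A)$.

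Next I would exploit almost integral type to control $S(A)$. Choosing $B\subset\FF[t]$ of integral type with $A\subset B$, one gets $S(A)\subset S(B)$, and since $B$ is a finite module over the standard graded subalgebra generated by $[B]_1$, the semigroup $S(B)$ lies in a finitely generated semigroup; hence $\Con(S(A))$ is strongly convex with bounded height-$1$ slice, so $\Delta(A)$ is a compact convex body and $q<\infty$. This already yields the upper bound: $S(A)_m$ is contained in the intersection of the dilate $m\,\Delta(A)$ with the height-$m$ coset $M_m$ of $M_0$ inside $G(S(A))$, and the elementary lattice-point count for a translated dilate of a fixed compact convex body in a $q$-dimensional lattice gives $\#S(A)_m\le\Vol(\Delta(A))\,m^q+o(m^q)$.

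For the matching lower bound I would exhaust $S(A)$ by finitely generated subsemigroups $S'\subseteq S(A)$ and use, for each of them, the classical conductor estimate — equivalently, the Ehrhart asymptotics of the affine toric variety $\Spec\kk[S']$ — that every lattice point of $\Con(S')$ lying outside a bounded-width neighbourhood of the boundary belongs to $S'$; this gives $\#S'_m=\Vol(\Delta(S'))\,m^{q'}+O(m^{q'-1})$ with $q'=\dim_\RR\Delta(S')$. For $S'$ large enough one has $q'=q$ and $G(S')=G(S(A))$, so the volume normalizations coincide, and as $S'$ exhausts $S(A)$ the bodies $\Delta(S')$ increase to $\Delta(A)$, whence $\Vol(\Delta(S'))\to\Vol(\Delta(A))$ by continuity of volume under monotone approximation of convex bodies. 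Therefore $\liminf_m\#S(A)_m/m^q\ge\Vol(\Delta(A))$, and together with the upper bound this shows $\dim_\kk[A]_m$ has polynomial growth of degree $q$ with leading coefficient $\Vol(\Delta(A))$, where the limit is taken along the arithmetic progression of $m$ with $S(A)_m\neq\varnothing$.

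I expect the genuine work to be the finitely generated case in the last step: establishing that a finitely generated subsemigroup of $\ZZ^{n+1}$ captures all lattice points of its cone outside a boundary layer, and that the approximating subsemigroups can be chosen so that their cones and generated groups stabilize to those of $S(A)$ — this is the combinatorial core of \cite{KAVEH_KHOVANSKII}. By comparison, compactness of $\Delta(A)$, the lattice-point estimate behind the upper bound, and the translation through the valuation are routine.
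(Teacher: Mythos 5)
This theorem is cited, not proved, in the paper: the authors use the sharper semigroup-level result (\autoref{thm_limit_KK}, Kaveh--Khovanskii's Corollary~1.16, together with the approximation \autoref{thm_approx}) as a black box when proving their own generalization (\autoref{thm_limit_graded_domain}). Your reconstruction is, in outline, exactly the argument of Kaveh--Khovanskii and Lazarsfeld--Musta\c{t}\u{a}: pass to the graded value semigroup $S(A)$ via a valuation with one-dimensional leaves so that $\dim_\kk [A]_m=\#S(A)_m$, show $\Con(S(A))$ has a compact height-one slice using the containing algebra of integral type, deduce the upper bound from lattice-point counting in dilates of $\Delta(A)$, and deduce the lower bound by exhausting $S(A)$ by finitely generated subsemigroups $S'$ with $G(S')=G(S(A))$ and applying the Khovanskii-type conductor estimate together with monotone convergence of volumes of $\Delta(S')\uparrow\Delta(A)$.

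Two small points worth making explicit in a clean write-up. First, your normalization of Lebesgue measure by $M_0=G(S(A))\cap(\ZZ^n\times\{0\})$ is precisely what absorbs the index $\ind(S)$ that appears in Kaveh--Khovanskii's formulation; the paper's \autoref{thm_limit_graded_domain} instead keeps $\ind(A)$ and also a factor $\ell_A$ visible because it allows valuations with bounded-dimensional (not just one-dimensional) leaves, splitting $\Gamma_A$ into the layers $\Gamma_A^{(t)}$. Second, the stabilization $q'=q$ and $G(S')=G(S(A))$ for large enough $S'$ does require an argument: it holds because $G(S(A))$ is finitely generated (being a subgroup of the finitely generated $G(S(B))$), so a finite set of elements of $S(A)$ already generates it, and any $S'$ containing those elements plus a full-dimensional simplex works. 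You correctly flag the restriction of the limit to the arithmetic progression of $m$ with $S(A)_m\neq\varnothing$, i.e.\ multiples of $m(S)$. With those caveats spelled out, the proposal is a correct rendering of the cited proof.
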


	We generalize \autoref{thm-kkml} to a far-reaching general case of algebras of almost integral type. We consider an arbitrary field $\kk$ (not necessarily algebraically closed) and we replace the field $\FF$ with any reduced $\kk$-algebra $R$. We also relate the dimension of the Newton-Okounkov body of $A\subset R[t]$ to the Krull dimension of $A$.
	
	\begin{headthm}[\autoref{thm_limit_graded}]
	    \label{thm_A}
	    Let $\kk$ be a field and $R$ be a reduced $\kk$-algebra.
		Let $A \subset R[t]$ be a graded $\kk$-algebra of almost integral type. 
		Suppose $d = \dim(A) > 0$.
		Then, there exists an integer $m > 0$ such that the limit 
		$$
		\lim_{n\to \infty} \frac{\dim_{\kk}\left([A]_{nm}\right)}{n^{d-1}}  \;\in\; \RR_{+}
		$$
		exists and it is a positive real number.   
	\end{headthm}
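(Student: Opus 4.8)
The plan is to reduce the statement to finitely many applications of the classical case \autoref{thm-kkml}, by decomposing a Noetherian model of $R$ along its minimal primes and following the Hilbert function through the induced filtration of $A$. Since $A$ is of almost integral type, fix a graded $\kk$-algebra $B$ of integral type with $A\subseteq B\subseteq R[t]$; replacing $R$ by the $\kk$-subalgebra $R_0\subseteq R$ generated by the coefficients of finitely many homogeneous generators of $B$ changes neither $A$ nor $\dim A$ nor the spaces $[A]_n$, and $R_0$ is a finitely generated, reduced (being a subring of $R$) $\kk$-algebra. So I may assume $R$ is Noetherian with minimal primes $\pp_1,\dots,\pp_r$ and $\pp_1\cap\dots\cap\pp_r=0$. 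Put $\FF_j:=\operatorname{Frac}(R/\pp_j)$, a finitely generated field extension of $\kk$, and let $A_j\subseteq\FF_j[t]$ be the image of $A$ under $R[t]\twoheadrightarrow (R/\pp_j)[t]\hookrightarrow\FF_j[t]$; each $A_j$ is a graded $\kk$-subalgebra of $\FF_j[t]$, again of almost integral type (it sits inside the image of $B$, still of integral type), and $\dim A_j\le\dim A$ because $A\twoheadrightarrow A_j$. Reorder the $\pp_j$ so that $\dim A_1$ is maximal among the $\dim A_j$.

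Next I would set up the filtration. Let $A^{(0)}:=A$ and $A^{(j)}:=A\cap(\pp_1\cap\dots\cap\pp_j)[t]$ for $1\le j\le r$, a descending chain of graded ideals of $A$ with $A^{(r)}=0$. Since $R[t]\twoheadrightarrow(R/\pp_j)[t]$ restricts on $A^{(j-1)}$ to a map whose kernel is precisely $A^{(j)}$, the successive quotient $A^{(j-1)}/A^{(j)}$ is, as a graded $\kk$-vector space, the image $\widetilde A_j\subseteq\FF_j[t]$ of $A^{(j-1)}$, a graded ideal of $A_j$; note $\widetilde A_1=A_1$. Therefore
\[
\dim_\kk\bigl([A]_n\bigr)=\sum_{j=1}^r\dim_\kk\bigl([\widetilde A_j]_n\bigr)\qquad\text{for every }n\ge 0 .
\]
A short annihilator argument over the (possibly non-Noetherian) ring $A$ — using that a nonzero $\widetilde A_j$ is a faithful $A_j$-module, being a nonzero ideal of the domain $\FF_j[t]$ — gives $\dim A=\max\{\dim A_j:\widetilde A_j\neq 0\}$, which by the chosen ordering and the inequality $\dim A_j\le\dim A$ equals $\dim A_1=\max_j\dim A_j=:d$.

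Finally I would extract the asymptotics. If $\widetilde A_j\neq 0$, pick a homogeneous $0\neq f_j\in\widetilde A_j$; then $f_jA_j\subseteq\widetilde A_j\subseteq A_j$, and $\deg f_j$ lies in the subgroup of $\ZZ$ generated by $\{n:[A_j]_n\neq 0\}$ (being a degree in which $A_j$ is nonzero), whose positive generator we call $m_j$. Hence
\[
\dim_\kk\bigl([A_j]_{n-\deg f_j}\bigr)\ \le\ \dim_\kk\bigl([\widetilde A_j]_n\bigr)\ \le\ \dim_\kk\bigl([A_j]_n\bigr).
\]
By \autoref{thm-kkml} applied to $A_j$, together with the standard fact that $\dim\Delta(A_j)=\dim A_j-1$ when $\dim A_j>0$: for $j$ with $\dim A_j=d$ one has $\lim_{n\to\infty}\dim_\kk([A_j]_{nm_j})/(nm_j)^{d-1}=\Vol_{d-1}(\Delta(A_j))>0$, and since $\deg f_j\in m_j\ZZ$ the squeeze transfers this limit verbatim to $\widetilde A_j$; for $j$ with $\dim A_j<d$ the upper bound gives $\dim_\kk([\widetilde A_j]_n)=O(n^{\dim A_j-1})=o(n^{d-1})$. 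Let $m$ be the least common multiple of the $m_j$ over the nonempty set of indices $j$ with $\dim A_j=d$ and $\widetilde A_j\neq0$ (it contains $j=1$). Then each summand in the displayed formula, divided by $n^{d-1}$, converges along $n\mapsto nm$ — to a strictly positive real for those finitely many $j$, and to $0$ for the rest — so $\lim_{n\to\infty}\dim_\kk([A]_{nm})/n^{d-1}$ exists and is a positive real, as claimed.

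The reduced-ring aspect is thus handled cleanly by the minimal-prime filtration; the real obstacle is the input \autoref{thm-kkml}, stated over an algebraically closed $\kk$ (where the chosen $\kk$-valuation has one-dimensional leaves), whereas here $\kk$ is arbitrary. To run the argument one must either base change each $A_j$ to $\overline\kk$ and repeat it over the finitely many components of $\FF_j\otimes_\kk\overline\kk$ (immediate when $\kk$ is perfect, needing an extra purely inseparable descent step otherwise), or work directly with a valuation of bounded leaf dimension and check that passing from $\#(\Gamma)_n$ to $\dim_\kk([A_j]_n)$ affects neither the order of growth nor — crucially — the \emph{existence} of the limit, only possibly its value. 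Getting a genuine limit out of the general-field semigroup method, rather than merely matching upper and lower asymptotic bounds, is the delicate point.
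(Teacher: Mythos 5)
Your minimal-prime filtration and the resulting identity $\dim_\kk([A]_n)=\sum_j\dim_\kk([\widetilde A_j]_n)$ mirror the paper's reduction in \autoref{lem_decompose} (your $\widetilde A_j$ are precisely the paper's modules $M^i$ of \autoref{eq_alg_B^i}), and the squeeze via multiplication by a fixed homogeneous $f_j$ correctly transfers the asymptotics of $A_j$ to the ideal $\widetilde A_j$. So the reduction from a reduced $\kk$-algebra $R$ to a finite collection of graded algebras inside $\FF_j[t]$ with $\FF_j$ a field is sound and is essentially the same decomposition the paper uses.

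However, the proof as written does not establish the theorem, because at the decisive step you cite \autoref{thm-kkml} --- together with the identity $\dim_\RR\Delta(A_j)=\dim(A_j)-1$ --- for the algebras $A_j\subset\FF_j[t]$ over the given field $\kk$, and that input is stated and proved by Kaveh--Khovanskii only when $\kk$ is algebraically closed, where the chosen valuation has one-dimensional leaves. For general $\kk$ the leaves can have dimension larger than one, and supplying a substitute is exactly the technical content of the paper's Sections 3.1--3.2: one fixes a valuation whose leaves have bounded dimension (\autoref{prop_bounded_val}), stratifies each graded piece of $A$ by the semigroups $\Gamma^{(t)}_A$ of \autoref{eq_semigroups}, shows each $\Gamma^{(t)}_A$ is a strongly non-negative semigroup ideal of $\Gamma_A$ with the same Newton--Okounkov body and index (\autoref{lem_polytope_equality}, \autoref{prop_properties_semig}), and only then sums the $\ell_A$ separate limits produced by \autoref{thm_limit_KK} to obtain \autoref{thm_limit_graded_domain}, which also justifies the equality $\dim_\RR\Delta(A_j)=\dim(A_j)-1$ in this generality. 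You acknowledge this gap in your final paragraph and sketch two possible repairs (descent from $\overline{\kk}$, or bounded-leaf valuations), but neither is carried out; in particular the existence --- not merely the order of growth --- of the limit over a general field, which is the heart of \autoref{thm_A}, is left unproven in your proposal.
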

	
	We prove this result by analyzing the graded algebras equipped with a valuation having leaves of bounded dimension. 
	In particular, we show that any algebra $A \subset R[t]$ of almost integral type has a valuation with bounded leaves (\autoref{prop_bounded_val}). 
	Our treatment of singly graded algebras also profited from the works of Cutkosky \cite{cutkosky2013, cutkosky2014}.
	
	\smallskip
	
	We now focus on our developments of the multigraded case of algebras and linear series.

	\addtocontents{toc}{\protect\setcounter{tocdepth}{1}}
	\subsection{Multigraded algebras of almost integral type}
	Here we describe our main results regarding arbitrary multigraded algebras of almost integral type.
	
	Let $\kk$ be an arbitrary field and $R$ be a $\kk$-domain. 
	Our results in principle could cover the case when $R$ is an arbitrary 
	reduced ring (see the general reductions used in \autoref{sect_single_grad}), however, in doing so the notation would be quite cumbersome in the notion of volume function that we define below.
	
	First, we introduce our main object of study, which provides the multigraded extension of graded algebras of almost integral type (as introduced in \cite{KAVEH_KHOVANSKII}).
	Let $t_1,\ldots,t_s$ be new variables  over $R$ and consider $R[t_1,\ldots,t_s]$ as a standard $\NN^s$-graded polynomial ring where $\deg(t_i) = \ee_i  \in \NN^s$ and $\ee_i$ denotes the $i$-th elementary vector $(0,\ldots, 1,\dots, 0)$.
	We have the following notions:
	
	\begin{enumerate}[(i)]
			\item An $\NN^s$-graded $\kk$-algebra $A = \bigoplus_{\bn \in \NN^s} [A]_{\bn} \subset R[t_1,\ldots,t_s]$ is said to be of \emph{integral type} if $A$ is finitely generated over $\kk$ and is a finite module over the subalgebra generated by $[A]_{\ee_1}, [A]_{\ee_2},\ldots,[A]_{\ee_s}$.
			\item An $\NN^s$-graded $\kk$-algebra $A = \bigoplus_{\bn \in \NN^s} [A]_{\bn} \subset R[t_1,\ldots,t_s]$ is said to be of \emph{almost integral type} if $A \subset B \subset R[t_1,\ldots,t_s]$, where $B$ is an $\NN^s$-graded algebra of integral type.
	\end{enumerate}

	Let $A \subset R[t_1,\ldots,t_s]$ be an $\NN^s$-graded algebra of almost integral type.
	To simplify notation, we also need to assume that $[A]_{\ee_i} \neq 0$ for all $1 \le i \le s$.  
	One has that the Krull dimension of $A$ is finite (see \autoref{thm_dim_subfinite_alg}), i.e.,~$\dim(A) < \infty$.
	Let $d = \dim(A)$ and $q = d -s$.
	Our main focus is on the \emph{volume function} of $A$, which is defined as 
	\begin{equation}
		\label{eq_volume_funct}
		F_A : \ZZ_+^s \rightarrow \RR, \quad\quad F_A(n_1,\ldots,n_s) \;= \; \lim_{n \to \infty} \frac{\dim_{\kk}\Big([A]_{(nn_1,\ldots,nn_s)}\Big)}{n^q}.
	\end{equation}
	As a direct consequence of \autoref{thm_limit_graded_domain}, we obtain that the limit defining the volume function $F_A$ always exists.
	Note that,  when $A$ is a standard $\NN^s$-graded algebra then the volume function $F_A$ encodes the mixed multiplicities of $A$ (see \autoref{rem_standard_graded}).
	
	In a similar fashion to \cite{lazarsfeld09}, we relate the volume function $F_A$ of $A$ to a global Newton-Okounkov body that we define below. 
	For that, we can safely assume that $R$ is finitely generated over $\kk$ (see \autoref{rem_finite_gen}) and that there exists a valuation $\nu : \Quot(R) \rightarrow \ZZ^r$ with certain good properties (see \autoref{prop_bounded_val}).
	Of particular importance is the fact that the valuation $\nu$ has leaves of bounded dimension. 
	Let $\Gamma_A$ be the \emph{valued semigroup} of the $\NN^s$-graded algebra $A$:
	\[
	\Gamma_A \;=\; \big\{(\nu(a),\bm)\in  \ZZ^{r}\times\NN^{s} \;\mid\; 0 \neq  a\in [A]_\bm \big\}.
	\]
	We define 
	$$ 
	\Delta(A) \;=\;  \Con(\Gamma_A) \subset \RR^r \times \RR_{\geq 0}^s
	$$ 
	to be the closed convex cone generated by $\Gamma_A$ and we call it the \emph{global Newton-Okounkov body} of $A$.
	We denote the index of $A$ by $\ind(A)$ and the maximal dimension of leaves of $A$ by $\ell_A$ (these invariants refer to a uniform behavior of the Veronese subalgebras $A^{(\bn)} = \bigoplus_{n = 0}^\infty [A]_{n\bn}$; for more details, see  \autoref{def_uniform_A}).
	One has the diagram below
	\begin{center}
		\begin{tikzpicture}
			\matrix (m) [matrix of math nodes,row sep=1.8em,column sep=2.5em,minimum width=2.5em, text height=1.5ex, text depth=0.25ex]
			{
				\Delta(A) &               & \RR^r \times \RR_{\ge 0}^s & \\
				&  \RR^r & & \RR_{\ge 0}^s\\				
			};
			\path[-stealth]
			(m-1-3) edge node [above]	{$\pi_1$} (m-2-2)
			(m-1-3) edge node [above]  {$\pi_2$} (m-2-4)
			;	
			\draw[right hook->] (m-1-1)--(m-1-3);			
		\end{tikzpicture}	
	\end{center}
	where $\pi_1  : \RR^{r} \times \RR_{\ge 0}^s \rightarrow \RR^r$ and $\pi_2  : \RR^{r} \times \RR_{\ge 0}^s \rightarrow \RR_{\ge 0}^s$ denote the natural projections.
	We denote the \emph{fiber} of the global Newton-Okounkov body $\Delta(A)$ over $x\in \RR_{\geq 0}^s$ by $\Delta(A)_x = \Delta(A) \cap \pi_2^{-1}(x)$.
	
	The following theorem contains our main results regarding the relations between the volume function $F_A$ and the global Newton-Okounkov body $\Delta(A)$.
	
	\begin{headthm}[{\autoref{thm-globalNO}, \autoref{cor-global}}]\label{head-globalNO}
		Under the notations above the following results hold:
		\begin{enumerate}[(i)]
			\item The fiber $\Delta(A)_{\bn}$ of the global Newton-Okounkov body $\Delta(A) \subset \RR^r \times \RR_{\ge 0}^s$  coincides with the Newton-Okounkov body $\Delta(A^{(\bn)})$ for each $\bn\in \ZZ_+^s$, that is:
			$$
			\pi_1\left(\Delta(A)_\bn\right) \times \{1\} \;= \; \pi_1 \left(\Delta(A) \cap \pi_2^{-1}(\bn) \right) \times \{1\} \;=\; \Delta(A^{(\bn)}) \; \subset \; \RR^r \times \{1\}.
			$$
			\item There exists a unique continuous homogeneous function of degree $q$ extending the volume function $F_A(\bn)$ defined in \autoref{eq_volume_funct} to the positive orthant $\RR_{\geq 0}^s$.
			This function is given by 
			$$
			F_A : \RR_{\geq 0}^s\to \RR, \qquad x \mapsto \ell_A \cdot \frac{\Vol_q(\Delta(A)_x)}{\ind(A)}.
			$$
			Moreover, the function $F_A$ is log-concave:
			$$
			F_A(x+y)^{\frac1q} \; \geq \; F_A(x)^{\frac1q}+F_A(y)^{\frac1q}\quad\text{for all } x,y\in \RR_{\geq 0}^s.
			$$
		\end{enumerate}
	\end{headthm}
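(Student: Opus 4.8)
The plan is to reduce everything to the single-graded theory through the Veronese subalgebras $A^{(\bn)}=\bigoplus_{n\ge 0}[A]_{n\bn}$, and then to read off continuity, homogeneity and log-concavity from the convexity of the closed cone $\Delta(A)$. So first I would establish (i), identifying the fiber of $\Delta(A)$ with the Newton--Okounkov body of the corresponding Veronese; then (ii) follows by feeding $A^{(\bn)}$ into the single-graded asymptotic theorem and applying Brunn-type convex geometry.

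\emph{Part (i).} Fix $\bn\in\ZZ_+^s$; after rescaling I may assume $\bn$ is primitive. Set $L_\bn:=\RR^r\times\RR_{\ge 0}\bn$, a closed convex cone. Under the linear map $\RR^r\times\RR\to\RR^r\times\RR^s$, $(w,m)\mapsto(w,m\bn)$, primitivity of $\bn$ together with $[A^{(\bn)}]_m=[A]_{m\bn}$ identifies the valued semigroup $\Gamma_{A^{(\bn)}}$ of the single-graded algebra $A^{(\bn)}\subset R[t]$ with $\Gamma_A\cap L_\bn$. Since $\Delta(A^{(\bn)})=\Con(\Gamma_{A^{(\bn)}})\cap(\RR^r\times\{1\})$ and $\Delta(A)_\bn=\Con(\Gamma_A)\cap\pi_2^{-1}(\bn)$, part (i) amounts to the cone identity
\[
\Con(\Gamma_A)\cap L_\bn=\Con\big(\Gamma_A\cap L_\bn\big).
\]
The inclusion $\supseteq$ is automatic since $L_\bn$ is a closed convex cone. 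For $\subseteq$, I would write $(v,\bn)\in\Con(\Gamma_A)$ as a limit of nonnegative combinations of elements $(\nu(a_i),\bm_i)\in\Gamma_A$; for $N\gg 0$ one forms a product of the $a_i$ lying in some $[A]_{\bm^{(N)}}$ with valuation $\approx Nv$ and $\bm^{(N)}\approx N\bn$, and then uses the multiplicativity $\nu(ab)=\nu(a)+\nu(b)$ and elements sitting over the $\ee_j$ (available since $[A]_{\ee_j}\ne 0$) to correct $\bm^{(N)}$ to an exact multiple of $\bn$, at the cost of a change in valuation that becomes of lower order as $N\to\infty$. The boundedness of leaves (\autoref{prop_bounded_val}), guaranteed by the almost-integral-type hypothesis, is what keeps these correction terms under control. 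I expect this degree-correction step to be the main obstacle in the whole proof.

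\emph{Part (ii), the formula on $\ZZ_+^s$.} For $\bn\in\ZZ_+^s$ one has $F_A(\bn)=\lim_{n}\dim_\kk([A^{(\bn)}]_n)/n^{q}$, and $A^{(\bn)}$ is again a single-graded $\kk$-algebra of almost integral type. Applying the single-graded asymptotic theorem (\autoref{thm_limit_graded_domain}) to $A^{(\bn)}$, using the dimension theory (\autoref{thm_dim_subfinite_alg}), which forces $\dim A^{(\bn)}\le q+1$ for all $\bn\in\ZZ_+^s$ (otherwise the limit defining $F_A$ would be infinite), and using \autoref{def_uniform_A} to replace $\ind(A^{(\bn)})$ and $\ell_{A^{(\bn)}}$ by the uniform invariants $\ind(A)$ and $\ell_A$, I obtain
\[
F_A(\bn)=\ell_A\cdot\frac{\Vol_q\big(\Delta(A^{(\bn)})\big)}{\ind(A)},
\]
both sides vanishing precisely when $\dim A^{(\bn)}<q+1$. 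Part (i) identifies $\Vol_q(\Delta(A^{(\bn)}))$ with $\Vol_q(\Delta(A)_\bn)$, giving the asserted formula on $\ZZ_+^s$.

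\emph{Part (ii), homogeneity, continuity, log-concavity.} Homogeneity of degree $q$ on $\ZZ_+^s$ is immediate from $F_A(c\bn)=\lim_n\dim_\kk([A]_{nc\bn})/n^q=c^qF_A(\bn)$, so $F_A$ is homogeneous of degree $q$ on $\QQ_+^s$. Since $\Delta(A)$ is a cone, $\Delta(A)_{cx}=c\,\Delta(A)_x$, hence $G(x):=\ell_A\Vol_q(\Delta(A)_x)/\ind(A)$ is well defined, homogeneous of degree $q$ on $\RR_{\ge 0}^s$, and agrees with $F_A$ on $\QQ_+^s$. By Brunn's theorem the map $x\mapsto\Vol_q(\Delta(A)_x)^{1/q}$ — a fiber-volume of the fixed convex set $\Delta(A)$, with bounded fibers thanks to the boundedness of leaves — is concave on $\RR_{\ge 0}^s$; being also positively homogeneous of degree $1$ it is superadditive, which after scaling by $(\ell_A/\ind(A))^{1/q}>0$ yields the log-concavity
\[
F_A(x+y)^{1/q}\ \ge\ F_A(x)^{1/q}+F_A(y)^{1/q}.
\]
Continuity of $G$ on all of $\RR_{\ge 0}^s$ (which is exactly $\pi_2(\Delta(A))$, as the elements over the $\ee_j$ fill it out) follows from the standard fact that fiber-volumes of a fixed closed convex set with bounded fibers vary continuously: upper semicontinuity from closedness of $\Delta(A)$ via Blaschke selection, lower semicontinuity by interpolating nearby fibers through the convexity of $\Delta(A)$. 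Thus $G$ is a continuous homogeneous-degree-$q$ function on $\RR_{\ge 0}^s$ restricting to $F_A$ on the dense subset $\QQ_+^s$; such an extension is unique, so $G=F_A$, completing the proof.
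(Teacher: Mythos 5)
Your proposal is correct and follows essentially the paper's route: part (i) by identifying $\Gamma_{A^{(\bn)}}$ with the slice of $\Gamma_A$ over multiples of $\bn$ and proving that the fiber of $\Con(\Gamma_A)$ equals the closure of the normalized points of that slice (the paper outsources this to the closure formula of \autoref{lem-NObody-closure}, whereas you spell out the degree-correction argument using the elements of $[A]_{\ee_j}$ — a worthwhile addition, since that is the only nontrivial inclusion); part (ii) by feeding $A^{(\bn)}$ into \autoref{thm_limit_graded_domain}, invoking the uniformity of $\ind$ and $\ell$ from \autoref{lem_uniform_veronese}, and getting homogeneity and log-concavity from the cone structure and Brunn--Minkowski, exactly as in \autoref{cor-global}. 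Two minor inaccuracies: the correction terms in part (i) are controlled simply because the $\nu(f_j)$ are fixed vectors, not by the boundedness of the leaves of $\nu$ (that hypothesis is used to pass between $\dim_\kk[A]_{n\bn}$ and semigroup counts inside \autoref{thm_limit_graded_domain}); and your justification that $\dim A^{(\bn)} \le q+1$ ``otherwise the limit would be infinite'' is circular — the paper instead computes $\dim(A^{(\bn)}) = d-s+1$ directly from $\Quot(A^{(\bn)}) = A_{(0)}\big(f_1^{n_1}\cdots f_s^{n_s}\big)$ and \autoref{thm_dim_subfinite_alg}, which is what legitimizes the exponent $q$ in \autoref{eq_volume_funct} in the first place. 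Neither slip affects the validity of the overall argument.
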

	
	We illustrate the concrete computation of the volume functions in  \autoref{examp_non_poly} and \autoref{examp_funct_min}. 
	In particular, we show that in general, part \emph{(ii)} of \autoref{head-globalNO} is the strongest result that can be obtained in this setting: for any non-negative,  homogeneous of degree $1$, concave function $f:\RR^s_{\geq 0} \to \RR_{\geq0}$ we construct an $\NN^s$-graded algebra $A$ whose volume function is $f$ (see \autoref{ex-universal-volume}).

	\subsection{Algebras with decomposable grading}
	An important question is to determine the following:
	\begin{itemize}
		\item	\emph{When is the volume function $F_A$  a polynomial?}
	\end{itemize}
	By mimicking the classical case of standard multigraded algebras, the existence of such a polynomial yields a natural notion of mixed multiplicities for the algebra $A$ (see \autoref{rem_standard_graded}). 
	It turns out that the volume function $F_A$ is not always a polynomial (see, e.g.,~\autoref{examp_non_poly}, \autoref{ex-universal-volume} and \autoref{examp_funct_min}). 
However, we show that the volume function is a polynomial for the family of multigraded algebras with decomposable gradings.

An $\NN^s$-graded algebra $A$ is said to have a \emph{decomposable grading} if the equality
		$$
		[A]_{(n_1,n_2,\ldots,n_s)} = [A]_{n_1\ee_1} \cdot [A]_{n_2\ee_2}  \cdot \, \cdots \, \cdot [A]_{n_s\ee_s}  
		$$
	holds for each $(n_1,n_2,\ldots,n_s) \in \NN^s$.

	Now, we additionally assume that the $\NN^s$-graded algebra $A \subset R[t_1,\ldots,t_s]$ of almost integral type has a decomposable grading.
	For each $p \ge 1$, let 
	$
	\widehat{A}_{[p]} \;=\; \kk\left[ [A]_{p\ee_1},\ldots, [A]_{p\ee_s} \right] \subset A
	$
	be the $\NN^s$-graded algebra generated by $[A]_{p\ee_1},\ldots,[A]_{p\ee_s}$, and denote by $\widetilde{A}_{[p]} = \bigoplus_{\bn \in \NN^s} \left[\widehat{A}_{[p]}\right]_{p\bn}$ the standard $\NN^s$-graded algebra obtained by regrading $\widehat{A}_{[p]}$.
	
	The following theorem deals with the case of algebras with a decomposable grading.
	
	\begin{headthm}[\autoref{thm_poly_decomp}]
		\label{thm_C}
		Assume the notations above with $A$ having a decomposable grading.
		Then, there exists a homogeneous polynomial $G_A(\bn) \in \RR[n_1,\ldots,n_s]$ of degree $q$ with non-negative real coefficients such that 
		$$
		F_A(\bn) \; = \; G_A(\bn) \quad \text{ for all } \quad  \bn \in \ZZ_+^s.
		$$
		Additionally, we have 
		$$
		G_A(\bn)  \; = \; \lim_{p \to \infty} \frac{G_{\widetilde{A}_{[p]}}(\bn)}{p^q} \; = \; \sup_{p \in \ZZ_+} \frac{G_{\widetilde{A}_{[p]}}(\bn)}{p^q} \quad \text{ for all } \quad  \bn \in \ZZ_+^s.
		$$
	\end{headthm}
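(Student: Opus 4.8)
The plan is to reduce everything to the standard multigraded algebras $\widetilde{A}_{[p]}$ and then feed in the single‑variable results of \autoref{sect_single_grad}. By \autoref{rem_finite_gen} we may assume $R$ is finitely generated over $\kk$ and fix, via \autoref{prop_bounded_val}, a valuation $\nu\colon\Quot(R)\to\ZZ^r$ with leaves of bounded dimension. For $1\le i\le s$ set $A_i:=\bigoplus_{n\ge 0}[A]_{n\ee_i}\subset R[t_i]$, a graded $\kk$‑algebra of almost integral type in one variable with $[A_i]_1\ne 0$; decomposability of the grading means exactly that $A$ is assembled from the $A_i$ along rays, $[A]_{n\bn}=[A]_{nn_1\ee_1}\cdots[A]_{nn_s\ee_s}$, and similarly $[\widetilde{A}_{[p]}]_{\bn}=[\widehat{A}_{[p]}]_{p\bn}=([A]_{p\ee_1})^{n_1}\cdots([A]_{p\ee_s})^{n_s}$ (spans of products inside $R[t_1,\dots,t_s]$). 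Since $A$ is of almost integral type with $[A]_{\ee_i}\ne 0$, the ring $[A]_{\f0}$ acts faithfully on the finite‑dimensional $\kk$‑vector space $[A]_{\ee_1}$ and is therefore a finite field extension of $\kk$; hence every $[A]_{\bn}$ with $\bn\ne\f0$ is finite‑dimensional over $\kk$, and each $\widetilde{A}_{[p]}$ is a finitely generated \emph{standard} $\NN^s$‑graded algebra, reduced as a subring of $R[t_1,\dots,t_s]$, with finite‑dimensional graded components.

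For such a standard multigraded algebra the Hilbert function eventually agrees with a polynomial, and, as recalled in \autoref{rem_standard_graded}, $F_{\widetilde{A}_{[p]}}=G_{\widetilde{A}_{[p]}}$ is exactly its top homogeneous component: a homogeneous polynomial of degree $q_p:=\dim(\widetilde{A}_{[p]})-s$ whose coefficients are the (suitably normalized) mixed multiplicities of $\widetilde{A}_{[p]}$, in particular non‑negative. Regrading does not change Krull dimension and $\widehat{A}_{[p]}\subseteq A$, so $q_p=\dim(\widehat{A}_{[p]})-s\le\dim(A)-s=q$; moreover, applying the single‑graded dimension results of \autoref{sect_single_grad} to each $A_i$ gives $\dim(\widehat{A}_{[p]})=\dim(A)$, i.e.\ $q_p=q$, for all sufficiently divisible $p$. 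Thus every $G_{\widetilde{A}_{[p]}}$ is an honest polynomial of degree $\le q$ with non‑negative coefficients, of degree exactly $q$ once $p\gg 0$.

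The substance of the proof, and the step I expect to be the main obstacle, is the comparison of $G_{\widetilde{A}_{[p]}}$ with $F_A$. Monotonicity along divisibility is formal: if $p'=cp$ then $([A]_{p\ee_i})^{cn_i}\subseteq([A]_{p'\ee_i})^{n_i}$, so $[\widetilde{A}_{[p]}]_{c\bn}\subseteq[\widetilde{A}_{[p']}]_{\bn}$, and dividing $\dim_\kk$ by $n^q$ and letting $n\to\infty$ gives $p^{-q}G_{\widetilde{A}_{[p]}}(\bn)\le{p'}^{-q}G_{\widetilde{A}_{[p']}}(\bn)$ for all $\bn\in\ZZ_+^s$. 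The bound $p^{-q}G_{\widetilde{A}_{[p]}}(\bn)\le F_A(\bn)$ follows from $\widehat{A}_{[p]}\subseteq A$: once $q_p=q$,
\[ \frac{G_{\widetilde{A}_{[p]}}(\bn)}{p^q}=\lim_{n\to\infty}\frac{\dim_\kk[\widehat{A}_{[p]}]_{pn\bn}}{(pn)^q}\ \le\ \lim_{n\to\infty}\frac{\dim_\kk[A]_{pn\bn}}{(pn)^q}=F_A(\bn). \]
For the reverse inequality $\sup_p p^{-q}G_{\widetilde{A}_{[p]}}(\bn)\ge F_A(\bn)$ — the heart of the matter — decomposability and \autoref{head-globalNO}(i) reduce the statement, ray by ray, to the single‑variable Fujita‑type approximation of \autoref{sect_single_grad}: the subalgebras $\kk[[A_i]_p]$ recapture the asymptotic Hilbert growth of $A_i$ as $p\to\infty$. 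Geometrically, $\Gamma_{A_1}+\cdots+\Gamma_{A_s}\subseteq\Gamma_A$ together with additivity of $\nu$ on products gives $\Delta(A)_{\bn}\supseteq n_1K_1+\cdots+n_sK_s$ (with $K_i$ the Newton--Okounkov body of $A_i$), and the Fujita statement upgrades this to an equality after passing to truncations and letting $p\to\infty$; with \autoref{head-globalNO}(ii) and $\ell_{\widetilde{A}_{[p]}}=\ind(\widetilde{A}_{[p]})=1$ (as $\widetilde{A}_{[p]}$ is standard), this yields $p^{-q}G_{\widetilde{A}_{[p]}}(\bn)\to\ell_A\Vol_q(\Delta(A)_{\bn})/\ind(A)=F_A(\bn)$, and together with the monotonicity, $\sup_p p^{-q}G_{\widetilde{A}_{[p]}}(\bn)=\lim_{p\to\infty}p^{-q}G_{\widetilde{A}_{[p]}}(\bn)=F_A(\bn)$.

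It remains only to see that $F_A$ is a polynomial on $\ZZ_+^s$, and this is now immediate. The functions $p^{-q}G_{\widetilde{A}_{[p]}}$ all lie in the finite‑dimensional real vector space $V$ of polynomials of degree $\le q$ in $n_1,\dots,n_s$, have non‑negative coefficients, and by the previous step converge pointwise on $\ZZ_+^s$ to $F_A$. Evaluation at a unisolvent (poised) set of $\dim_\RR V$ points of $\ZZ_+^s$ is a linear isomorphism onto $\RR^{\dim_\RR V}$, so pointwise convergence at those finitely many points already forces convergence inside $V$; hence $F_A$ agrees on $\ZZ_+^s$ with a polynomial $G_A\in\RR[n_1,\dots,n_s]$ of degree $\le q$, and since the cone of polynomials with non‑negative coefficients is closed in $V$, $G_A$ has non‑negative coefficients. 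Finally, by \autoref{head-globalNO}(ii) $F_A$ extends to a homogeneous function of degree $q$ on $\RR_{\ge 0}^s$, so $G_A$ is homogeneous of degree $q$; in fact $G_A=\frac{\ell_A}{\ind(A)}\sum_{|\alpha|=q}\binom{q}{\alpha}\MV\big(K_1^{[\alpha_1]},\dots,K_s^{[\alpha_s]}\big)n^{\alpha}$, whose coefficients are the (non‑negative) mixed volumes. This establishes all the assertions.
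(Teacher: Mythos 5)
Your outer scaffolding (monotonicity of $p^{-q}G_{\widetilde{A}_{[p]}}(\bn)$ along divisibility, the upper bound by $F_A(\bn)$, and the final ``pointwise convergence of degree-$\le q$ polynomials forces convergence of coefficients'' step) matches the paper and is fine; the last step is exactly the paper's appeal to \cite[Lemma 3.2]{cutkosky2019}. But the step you yourself flag as the heart of the matter --- $\sup_p p^{-q}G_{\widetilde{A}_{[p]}}(\bn)\ge F_A(\bn)$ --- has a genuine gap. You propose to reduce it ``ray by ray'' to the single-variable Fujita approximation for the algebras $A_i=\bigoplus_n[A]_{n\ee_i}$, and you assert that this upgrades $\Delta(A)_{\bn}\supseteq n_1K_1+\cdots+n_sK_s$ to an equality, whence $G_A$ is the Minkowski mixed-volume polynomial of the $K_i=\Delta(A_i)$. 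Neither claim is justified, and the second is false in general. Decomposability says $[A]_{\bm}$ is the \emph{span} of products $a_1\cdots a_s$; a linear combination of such products can have valuation strictly larger than any $\nu(a_1)+\cdots+\nu(a_s)$, so $\Gamma_A$ genuinely exceeds $\Gamma_{A_1}+\cdots+\Gamma_{A_s}$ and the superadditivity $\Delta(A)_{\bn}\supseteq\sum_i n_iK_i$ is strict in general. (Standard bigraded example: for the graph $Y\subset\PP^2\times\PP^2$ of a quadratic Cremona map one has $e(1,1;A)=(L_1\cdot L_2)=2$, while Kaveh--Khovanskii only give $(L_1\cdot L_2)\ge 2\,\MV(\Delta(L_1),\Delta(L_2))$, with equality failing in general; this is precisely why the paper states polynomiality for \emph{linear families of fibers} as a separate sufficient condition and does not derive \autoref{thm_C} from it.) Knowing that $\kk[[A_i]_p]$ recaptures the growth of each $A_i$ does not control $\dim_\kk\bigl([A]_{p\ee_1}^{n_1}\cdots[A]_{p\ee_s}^{n_s}\bigr)$, because the dimension of a span of products is not a function of the dimensions of the factors.

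What the paper does instead is apply the semigroup approximation theorem (\autoref{thm_approx}) not to the rays $A_i$ but to the \emph{diagonal} singly graded algebra $B=\bigoplus_n[A]_{n\ee_1}^{n_1}\cdots[A]_{n\ee_s}^{n_s}$ and its subalgebras $C^p=(\widetilde{A}_{[p]})^{(\bn)}$, via the inclusions $n\star[\Gamma_B^{(t)}]_p\subset[\Gamma_{C^p}^{(t)}]_n\subset[\Gamma_B^{(t)}]_{np}$; decomposability enters only to identify $[B]_{nh}$ with $[A]_{nh\bn}$ for suitable $h$ (\autoref{prop_subalg_p}), and this identification requires $A$ finitely generated, so a separate truncation argument (\autoref{prop_tuncation}, comparing $A$ with $G^{[a]}(A)$ as $a\to\infty$) is needed to remove that hypothesis --- a reduction your proposal does not supply. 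Two smaller errors: standardness of $\widetilde{A}_{[p]}$ does not give $\ell_{\widetilde{A}_{[p]}}=\ind(\widetilde{A}_{[p]})=1$ (these depend on the valuation and the field extensions, not on the grading); and your closing formula $G_A=\frac{\ell_A}{\ind(A)}\sum_{|\alpha|=q}\binom{q}{\alpha}\MV\bigl(K_1^{[\alpha_1]},\ldots,K_s^{[\alpha_s]}\bigr)n^{\alpha}$ is only a lower bound for $G_A$, not an identity.
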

		We illustrate \autoref{thm_C} with an example of the Cox ring of a full flag variety (\autoref{examp_cox_ring}). 

	Moreover, we can write the polynomial $G_A$ of \autoref{thm_C} as follows
	$$
	G_A(\bn) \;=\; \sum_{|\dd| = q} \frac{1}{\dd!}\, e(\dd;A)\, \bn^\dd.
	$$
	Then, for each  $\dd = (d_1,\ldots,d_s) \in \NN^s$ with $|\dd| = q$, we define the non-negative real number 
	$
	e(\dd;A) \ge 0
	$ 
	to be the {\it mixed multiplicity of type $\dd$ of $A$}.

	In \autoref{cor_vol_mult}, we provide an extension into a multigraded setting of the Fujita approximation theorem for graded algebras given in \cite[Theorem 2.35]{KAVEH_KHOVANSKII}.
	We show the following equalities 
		$$
	e(\dd;A) 
	\; = \; 
	\lim_{p \to \infty}  \frac{e\big(\dd; \widetilde{A}_{[p]}\big)}{p^q}
	\; = \; 
	\sup_{p \in \ZZ_+}  \frac{e\big(\dd; \widetilde{A}_{[p]}\big)}{p^q}.
	$$
	Furthermore, in \autoref{thm_postivity_decomp}, we provide a full characterization for the positivity of the mixed multiplicities $e(\dd;A)$ of $A$.
	
	In our findings, we got that a number of interesting applications follow rather easily by studying certain multigraded algebras with decomposable grading.
	The list of applications includes the following: 
	\begin{enumerate}
		\item In \autoref{sect_appl_ideals}, we recover some results from \cite{MIXED_MULT_GRAD_FAM, MIXED_VOL_MONOM, cutkosky2019} by showing the existence of mixed multiplicities for (not necessarily Noetherian) graded families of ideals.
		\item In \autoref{subsect_equigen_ideals}, we provide a full characterization for the positivity of the mixed multiplicities of graded families of equally generated ideals.
		\item In \autoref{subsect_mixed_vol}, we recover a classical characterization for the positivity of the mixed volumes of convex bodies.
	\end{enumerate}
	
	\subsection{Multigraded linear series}
Finally, we are interested in the notion of multigraded linear series. 
	Let $\kk$ be an arbitrary field and $X$ be a proper variety over $\kk$.
	Let $D_1,\ldots,D_s$ be a sequence of Cartier divisors on $X$, and consider the corresponding section ring
	$$
	\SSS(D_1,\ldots,D_s) \; = \; \bigoplus_{(n_1,\ldots,n_s) \in \NN^s} \HH^0\big(X, \OO(n_1D_1+\cdots+n_sD_s)\big).
	$$
	A \emph{multigraded linear series} associated to the divisors $D_1,\ldots,D_s$ is an $\NN^s$-graded $\kk$-subalgebra $W$ of the section ring $\SSS(D_1,\ldots,D_s)$. 
In particular, $W$ is 
of almost integral type (see \autoref{prop_sect_ring}).
	The \emph{Kodaira-Itaka dimension} of $W$ is 
	given by 
	$$
	\kappa(W) \; = \; \dim(W) - s,
	$$
	where as before $\dim(W)$ denotes the Krull dimension of $W$.
	As simple consequence of our developments for multigraded algebras, we have the following theorem for multigraded linear series (which we enunciate below for the sake of completeness).
	
	\begin{headthm}[\autoref{thm-linear-series}]
	\label{thm_D}
		Under the above notations, let $W \subset \SSS(D_1,\ldots,D_s)$ be a multigraded linear series and suppose that $[W]_{\ee_i} \neq 0$ for all $1 \le i \le s$.				
		Then, the following statements hold: 
		\begin{enumerate}[(i)]
			\item The volume function 
			$$
			F_W(\bn) \; = \; \lim_{n \to \infty} \frac{\dim_{\kk}\big([W]_{n\bn}\big)}{n^{\kappa(W)}}
			$$
			of $W$ is well-defined for all $\bn \in \ZZ_+^s$.
			\item There exists a unique continuous function that is homogeneous of degree $\kappa(W)$ and log-concave and that extends the volume function $F_W(\bn)$ 
			to the positive orthant $\RR_{\geq 0}^s$.
			This function is given by 
			$$
			F_W \, :\, \RR_{\geq 0}^s\to \RR, \qquad x \,\mapsto\, \ell_W \cdot \frac{\Vol_{\kappa(W)}\big(\Delta(W)_x\big)}{\ind(W)}.
			$$
			\item If $W$ has a decomposable grading, then there exists a homogeneous polynomial $G_W(\bn) \in \RR[n_1,\ldots,n_s]$ of degree $\kappa(W)$ with non-negative real coefficients such that 
			$$
			F_W(\bn)  \;=\;  G_W(\bn)
			$$  for all $\bn \in \ZZ_+^s$.
			\end{enumerate}
	\end{headthm}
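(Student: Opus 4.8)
The strategy is to realize $W$ as a multigraded algebra of almost integral type over a $\kk$-domain and then quote the results of the preceding subsections; essentially all the content is in the set-up. First I would choose, for each $i$, a nonzero rational section $\sigma_i$ of $\OO(D_i)$, and use the resulting identification of $\HH^0\big(X,\OO(n_1D_1+\cdots+n_sD_s)\big)$ with a $\kk$-subspace of the function field $\kk(X)$ (global sections divided by $\sigma_1^{n_1}\cdots\sigma_s^{n_s}$) to embed the section ring, hence $W$, as an $\NN^s$-graded $\kk$-subalgebra of $R[t_1,\ldots,t_s]$, where $R=\kk(X)$ is a $\kk$-domain (in fact a field, as $X$ is a variety) and $[W]_{n\ee_i}\subseteq R\,t_i^n$. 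By \autoref{prop_sect_ring} the section ring $\SSS(D_1,\ldots,D_s)$, and therefore its $\NN^s$-graded subalgebra $W$, is of almost integral type, so $\dim(W)<\infty$ by \autoref{thm_dim_subfinite_alg}. The hypothesis $[W]_{\ee_i}\neq 0$ for all $i$ is exactly the standing assumption made on $A$ in the multigraded theory; moreover, choosing a nonzero $f_i t_i\in[W]_{\ee_i}$ for each $i$ produces $s$ elements of $W$ that are algebraically independent over $\kk$ (a polynomial relation among them would force all coefficients to vanish, since $R$ is a domain and the monomials $t_1^{a_1}\cdots t_s^{a_s}$ are $R$-linearly independent), so $\dim(W)\geq s>0$ and $\kappa(W)=\dim(W)-s$ is precisely the integer $q$ attached to $W$ as a multigraded algebra. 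Hence the volume function of $W$ in the sense of \autoref{eq_volume_funct} coincides with the $F_W$ of the statement.

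With this identification in hand, part (i) is immediate: the limit defining $F_W(\bn)$ exists for every $\bn\in\ZZ_+^s$ by \autoref{thm_limit_graded_domain}, as recorded right after \autoref{eq_volume_funct}. For part (ii) I would, after the harmless reductions of \autoref{rem_finite_gen} (shrink $R$ to a finitely generated $\kk$-subalgebra carrying the relevant data) and \autoref{prop_bounded_val} (fix a valuation $\nu$ on $\Quot(R)$ with leaves of bounded dimension), form the valued semigroup $\Gamma_W$ and the global Newton--Okounkov body $\Delta(W)=\Con(\Gamma_W)$, and apply \autoref{thm-globalNO} and \autoref{cor-global} (that is, part (ii) of \autoref{head-globalNO}) to $W$. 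This yields the unique continuous, log-concave extension of $F_W$ to $\RR_{\geq 0}^s$ that is homogeneous of degree $\kappa(W)$, given by $x\mapsto \ell_W\cdot\Vol_{\kappa(W)}(\Delta(W)_x)/\ind(W)$. For part (iii), if $W$ has a decomposable grading then \autoref{thm_poly_decomp} (that is, \autoref{thm_C}) applies verbatim to $W$ and produces the homogeneous polynomial $G_W\in\RR[n_1,\ldots,n_s]$ of degree $\kappa(W)$ with non-negative coefficients such that $F_W(\bn)=G_W(\bn)$ for all $\bn\in\ZZ_+^s$.

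The only step that is not a mechanical transcription --- and hence the main, though rather mild, obstacle --- is this first reduction: checking that $\SSS(D_1,\ldots,D_s)$, and so $W$, is genuinely of almost integral type inside $R[t_1,\ldots,t_s]$ over a $\kk$-domain. This uses properness of $X$ (so that each $\HH^0\big(X,\OO(n_1D_1+\cdots+n_sD_s)\big)$ is finite-dimensional over $\kk$ and the section ring sits inside a graded algebra of integral type) together with the integrality of $X$ (to take $R=\kk(X)$ a field), and it is precisely the content of \autoref{prop_sect_ring}. Everything downstream of it is a direct application of \autoref{thm_A}, \autoref{head-globalNO} and \autoref{thm_C}, already established for arbitrary multigraded algebras of almost integral type.
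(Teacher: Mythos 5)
Your proof is correct and takes the same route as the paper: the paper's own proof of \autoref{thm-linear-series} is just a three-line citation of \autoref{eq_function_A} for (i), \autoref{cor-global} for (ii), and \autoref{thm_poly_decomp} for (iii), and the preliminary details you spell out --- embedding $\SSS(D_1,\ldots,D_s)$ into $\kk(X)[t_1,\ldots,t_s]$ via division by rational sections $\sigma_i$, invoking \autoref{prop_sect_ring} for the almost-integral-type property, and identifying $\kappa(W)=\dim(W)-s$ with the exponent $q$ --- are exactly the implicit reductions the paper relies on.
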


	Finally, in \autoref{thm_Kodaira_multigrad}, we express the mixed multiplicities of a multigraded linear series with decomposable grading in terms of the multidegrees of the multiprojective varieties obtained as the image of certain Kodaira rational maps.
	
	\subsection{Organization of the paper}
	
	The basic outline of this paper is as follows. 
	In \autoref{sect_notations}, we recall some important results and fix some notations.
	In \autoref{sect_single_grad}, we deal with singly graded algebras and we prove \autoref{thm_A}.
	In \autoref{sect_mult_grad}, we begin our study of multigraded algebras and we prove \autoref{head-globalNO}.
	Our treatment of multigraded algebras with decomposable grading is made in \autoref{sect_decomp_grad}, where we show \autoref{thm_C}.
	In \autoref{sect_appl_ideals}, we obtain some applications for the mixed multiplicities of graded families of ideals and for the mixed volumes of convex bodies.
	Finally, in \autoref{section_mult_lin_series}, we apply our results on multigraded algebras to multigraded linear series and we prove \autoref{thm_D}.

	\addtocontents{toc}{\protect\setcounter{tocdepth}{2}}
	\section{Notation and preliminaries}
	\label{sect_notations}
	
	In this preparatory section, we fix our notation and recall some important results to be used throughout the paper.
	We denote the set of non-negative integers by $\NN = \{ 0,1,2,\ldots \}$ and the set of positive integers by $\ZZ_+ = \{1,2,\ldots\}$.
	Let $d \ge 1$.
	For a vector $\fn=(n_1,\ldots, n_d)\in \NN^d$ we denote by $|\fn|$ the sum of its entries. We also denote by $\ee_i\in \NN^{d}$ the $i$-th elementary vector $(0,\ldots, 1,\dots, 0)$. 
	For $\fn=(n_1,\ldots, n_d)$ and $\fm=(m_1,\ldots, m_d)$ in $\NN^d$ we write $\fn\gs \fm$ if $n_i\gs m_i$ for every $1\ls i\ls d$. 
	The null vector $(0,\ldots,0) \in \NN^d$ is denoted by $\mathbf{0} \in \NN^d$. 
	Moreover, we write $\fn\gg \mathbf{0}$ if $n_i\gs 0$ for every $1\ls i\ls d$. 
	We also use the abbreviations $\fn!=n_1!\cdots n_d!$ and $\fn^\fm=n_1^{m_1}\cdots n_d^{m_d}$.
	
	\smallskip
	
	We first describe the notions and methods of Newton-Okounkov bodies and recall some important results from \cite{KAVEH_KHOVANSKII}.
	Let $d \ge 1$.
	Suppose that $S \subset \ZZ^{d+1}$ is a semigroup in $\ZZ^{d+1}$.
	Let $\pi  : \RR^{d+1} \rightarrow \RR$ be the projection into the last component.
	Let $L = L(S)$ be the linear subspace of $R^{d+1}$ which is generated by $S$.
	Let $M = M(S)$ be the rational half-space $M(S):= L(S) \cap \pi^{-1}(\RR_{\ge 0}) = L(S) \cap \left(\RR^d \times \RR_{\ge 0}\right)$, and let $\partial M_\ZZ = \partial M \cap \ZZ^{d+1}$.
	Let $\Con(S) \subset L(S)$ be the closed convex cone which is the closure of the set of all linear combinations $\sum_i \lambda_is_i$ with $s_i \in S$ and $\lambda_i \ge 0$.
	Let $G(S) \subset L(S)$ be the group generated by $S$.
	
	We say that the semigroup $S$ is \emph{non-negative} if $S \subset M$;
	additionally, if $\Con(S)$ is strictly convex and intersects the space $\partial M$ only at the origin, then $S$ is \emph{strongly non-negative} (see \cite[Definition 1.9 and \S 1.4]{KAVEH_KHOVANSKII}).
	
	Following \cite{KAVEH_KHOVANSKII}, when $S$ is non-negative we fix the following notation:
	\begin{itemize}[--]
		\item $[S]_n := S \cap \pi^{-1}(n) = S \cap \left(\ZZ^d \times \{n\}\right)$.
		\item $m(S) := \left[\ZZ : \pi(G(S))\right]$.
		\item $\ind(S) := \left[\partial M_\ZZ : G(S) \cap \partial M\right]$.
		\item $\Delta(S) := \Con(S) \cap \pi^{-1}(m(S))$ \quad  (the Newton-Okounkov body of $S$).
		\item $\Vol_q(\Delta(S))$ is the \emph{integral volume} of $\Delta(M,S)$ (see \cite[Definition 1.13]{KAVEH_KHOVANSKII}); this volume is computed using the
		translation of the \emph{integral measure} on $\partial M$.
	\end{itemize}
	
	The following result is of remarkable importance for us.
	
	\begin{theorem}[{Kaveh-Khovanskii, \cite[Corollary 1.16]{KAVEH_KHOVANSKII}}]\label{thm_limit_KK}
		Suppose that $S$ is strongly non-negative.
		Let $m = m(S)$ and $q = \dim_\RR(\Delta(S))$. 
		Then 
		$$
		\lim_{n\to \infty} \frac{\#[S]_{nm}}{n^q} \;=\; \frac{\Vol_q(\Delta(S))}{\ind(S)}.
		$$
	\end{theorem}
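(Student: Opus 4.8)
\emph{Proof proposal.} Since this is a statement about counting lattice points in slices of a cone, the plan is to prove it first when $S$ is finitely generated, by trapping $\#[S]_{nm}$ between two lattice-point counts for the ``full'' semigroup $\Con(S)\cap G(S)$, and then to deduce the general case by exhausting $S$ from inside by finitely generated subsemigroups.

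\textbf{Finitely generated case.} Write $G=G(S)$ and $C=\Con(S)$, a rational polyhedral cone; strong non-negativity forces each slice $C\cap\pi^{-1}(t)$ to be bounded, so $\Delta(S)=C\cap\pi^{-1}(m)$ is a $q$-dimensional rational polytope and $C\cap\pi^{-1}(nm)=n\,\Delta(S)$. The key input is Khovanskii's structure theorem for finitely generated semigroups (from \cite{KAVEH_KHOVANSKII}): there exists $a\in S$, with $\pi(a)=cm$ for some $c\in\NN$, such that $a+(C\cap G)\subseteq S$. Together with $S\subseteq C\cap G$, slicing at level $nm$ gives
\[
\#\big[C\cap G\big]_{(n-c)m}\;\leq\;\#[S]_{nm}\;\leq\;\#\big[C\cap G\big]_{nm},
\]
so it is enough to compute $\lim_n \#[C\cap G]_{nm}/n^q$. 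The fiber $G\cap\pi^{-1}(nm)$ is a coset of the rank-$q$ lattice $L:=G\cap\partial M$, so $\#[C\cap G]_{nm}$ counts the points of a translated copy of $L$ inside the dilate $n\,\Delta(S)$; the classical equidistribution of lattice points in large convex bodies gives $\#[C\cap G]_{nm}\sim n^q\,\mathrm{vol}_q(\Delta(S))/\mathrm{covol}(L)$ for any fixed Euclidean volume $\mathrm{vol}_q$ on $\partial M$. Unwinding the normalizations, $\mathrm{vol}_q(\Delta(S))=\mathrm{covol}(\partial M_\ZZ)\cdot\Vol_q(\Delta(S))$ and $\mathrm{covol}(L)=[\partial M_\ZZ:L]\cdot\mathrm{covol}(\partial M_\ZZ)=\ind(S)\cdot\mathrm{covol}(\partial M_\ZZ)$, so the ratio is exactly $\Vol_q(\Delta(S))/\ind(S)$, which settles this case.

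\textbf{General case.} For the upper bound, $\#[S]_{nm}\leq\#[\Con(S)\cap G(S)]_{nm}$, and the right side is asymptotic to $n^q\,\Vol_q(\Delta(S))/\ind(S)$ by the same lattice-point asymptotic, now applied to the compact (hence Jordan-measurable) convex body $\Delta(S)$, which need not be a polytope; hence $\limsup_n\#[S]_{nm}/n^q\leq\Vol_q(\Delta(S))/\ind(S)$. For the lower bound, write $S=\bigcup_i S_i$ as an increasing union of finitely generated subsemigroups. Each $S_i$ is again strongly non-negative, and because increasing chains of subgroups of $\ZZ$, of the finitely generated group $\partial M_\ZZ$, and of the ambient linear span all stabilize, one has $m(S_i)=m(S)$, $\ind(S_i)=\ind(S)$, and $\dim\Delta(S_i)=q$ for all $i\gg 0$. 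The finitely generated case then yields $\liminf_n\#[S]_{nm}/n^q\geq\Vol_q(\Delta(S_i))/\ind(S)$ for such $i$, and letting $i\to\infty$ gives the matching lower bound since $\Con(S_i)\uparrow\Con(S)$ forces $\Vol_q(\Delta(S_i))\to\Vol_q(\Delta(S))$ by monotone convergence. Combining the bounds, the limit exists and equals $\Vol_q(\Delta(S))/\ind(S)$.

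\textbf{Main obstacle.} The crux is the finitely generated case, and within it the appeal to Khovanskii's theorem that $a+(\Con(S)\cap G(S))\subseteq S$ for some $a\in S$: without such a statement the semigroup $S$ could a priori carry lattice-point gaps persisting in every fiber, which would break the count. The only other delicate point is the bookkeeping identifying the integral volume $\Vol_q$ and the index $\ind(S)$ with Euclidean covolumes; the remainder is the routine asymptotics of lattice points in dilated convex bodies.
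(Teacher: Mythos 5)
The paper does not prove this statement --- it quotes Kaveh--Khovanskii's Corollary~1.16 without proof --- and your proposal essentially reconstructs the argument of the cited reference: first the finitely generated case via Khovanskii's regularization theorem (that $a+(\Con(S)\cap G(S))\subseteq S$ for some $a\in S$) together with lattice-point asymptotics for dilated convex bodies, then the general case by exhausting $S$ from inside by finitely generated subsemigroups, observing that $m(S_i)$, $\ind(S_i)$, and the span $L(S_i)$ stabilize, and passing to the limit of volumes by monotone convergence. The normalization bookkeeping relating the integral volume $\Vol_q$, the index $\ind(S)$, and Euclidean covolumes is handled correctly, so the argument is sound and follows the same route as the source.
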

	
	For a subset $U \subset S$ of $S$ and  $n\in \NN$,  we define
	$
	n\star U \;:=\; \Big\{\sum_{i=1}^n u_i \,\mid \, u_1,\ldots, u_n \in U \Big\}.
	$
	For $p \in \NN$, we denote by $\widehat{S}_p$ the subsemigroup of $S$ generated by $[S]_p$, that is,
	$$
	\left[\widehat{S}_p\right]_{np} = n \star [S]_p \quad\; \text{ and } \quad\; \left[\widehat{S}_p\right]_{n} = 0 \text{ if } p \text{ does not divide } n.
	$$
	The following approximation theorem relates the semigroup $S$ with the semigroups $\widehat{S}_{pm}$ for $p$ big enough.
	
	\begin{theorem}[{\cite[Theorem 1.27]{KAVEH_KHOVANSKII}, \cite[Proposition 3.1]{lazarsfeld09}}]
		\label{thm_approx}
		Suppose that $S$ is strongly non-negative.
		Let $m = m(S)$ and $q = \dim(\Delta(S))$. 
		Let $\varepsilon > 0$ be a positive real number.
		Then, for $p \gg 0$  the following statements hold: 
		\begin{enumerate}[(i)]
			\item $\dim_\RR(\Delta(\widehat{S}_{pm})) = q$.
			\item $\ind(\widehat{S}_{pm}) = \ind(S)$. 
			\item We have the inequalities
			$$
			\lim_{n\to \infty} \frac{\#[S]_{nm}}{n^q} - \varepsilon \;\; \le \;\; 
			\lim_{n\to \infty} \frac{\#\left(n \star [S]_{pm}\right)}{n^qp^q} 
			\;\; \le \;\; \lim_{n\to \infty} \frac{\#[S]_{nm}}{n^q}.
			$$
		\end{enumerate} 
	\end{theorem}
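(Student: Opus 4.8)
The plan is to squeeze the (possibly non-finitely-generated) semigroup $S$ from inside by a single well-chosen finitely generated subsemigroup, and then invoke Khovanskii's theorem that a finitely generated semigroup fills its cone away from the boundary. Before that, I would dispose of the right-hand inequality in (iii) and the existence of that limit, which are formal once (i) and (ii) are in hand. Note that $\widehat S_{pm}$ is a subsemigroup of the strongly non-negative $S$, hence is itself strongly non-negative (its cone is a subcone of $\Con(S)$ and $L(\widehat S_{pm})\subseteq L(S)$), and $m(\widehat S_{pm})=pm$ because $[S]_{pm}\neq\emptyset$ for $p\gg0$. Since $n\star[S]_{pm}=[\widehat S_{pm}]_{npm}\subseteq[S]_{npm}$, applying \autoref{thm_limit_KK} to $\widehat S_{pm}$ — using $\dim\Delta(\widehat S_{pm})=q$ from (i) — shows that $\lim_n \#(n\star[S]_{pm})/(n^qp^q)$ exists and equals $\Vol_q(\Delta(\widehat S_{pm}))/(p^q\ind(\widehat S_{pm}))$; comparing with $\#[S]_{npm}$ and \autoref{thm_limit_KK} for $S$ then gives $\lim_n \#(n\star[S]_{pm})/(n^qp^q)\le\lim_n \#[S]_{nm}/n^q$. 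So the real work is (i), (ii), and the left-hand inequality.

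The key step is to build the approximating subsemigroup. Fix $\varepsilon>0$. I would pick rational points $x_1,\dots,x_M$ in the relative interior of $\Delta(S)=\Con(S)\cap\pi^{-1}(m)$ with $\Vol_q(\conv(x_1,\dots,x_M))\ge\Vol_q(\Delta(S))-\varepsilon$. Because $\Con(S)$ is by definition the closure of the union of the cones $\Con(F)$ over finite $F\subseteq S$, its relative interior is exhausted by the relative interiors of these finite subcones, so there is a finite $F_0\subseteq S$ with all $x_i\in\operatorname{relint}\Con(F_0)$. Then I would enlarge $F_0$ to a finite $F\subseteq S$ that also contains summands witnessing that (a) some level-$m$ element of $G(S)$ lies in the group generated by $F$, and (b) a finite generating set of the level-zero subgroup $G(S)\cap\partial M$ lies in the group generated by $F$. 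Put $S_F:=\langle F\rangle$: a finitely generated strongly non-negative subsemigroup of $S$ with $\Con(S_F)$ pointed, with $m(S_F)=m$ by (a), with $G(S_F)\cap\partial M=G(S)\cap\partial M$ by (b) together with $S_F\subseteq S$, and with the compact set $K:=\conv(x_1,\dots,x_M)$ contained in $\operatorname{relint}\Con(S_F)$.

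Now apply Khovanskii's theorem to the finitely generated $S_F$: for the compact $K\subseteq\operatorname{relint}\Con(S_F)$ there is $p_0$ with $pK\cap G(S_F)\subseteq S_F$ for all $p\ge p_0$; since $pK$ sits at level $pm$, this reads $pK\cap G(S_F)\subseteq[S_F]_{pm}\subseteq[S]_{pm}$. From this, (i) follows because $pK$ is a full $q$-dimensional polytope, so for $p$ large $pK\cap G(S_F)$ affinely spans the level-$pm$ slice, forcing $\dim\Con(\widehat S_{pm})=q+1$ and hence $\dim\Delta(\widehat S_{pm})=q$ (the opposite bound is automatic from $[S]_{pm}\subseteq\Con(S)$); (ii) follows because for $p$ large $pK$ contains a translate of a fundamental domain of the level-zero lattice, so the differences of the points of $pK\cap G(S_F)$ — all lying in $[S]_{pm}$, hence in $G(\widehat S_{pm})$ — generate $G(S_F)\cap\partial M=G(S)\cap\partial M$, which with $G(\widehat S_{pm})\subseteq G(S)$ and $L(\widehat S_{pm})=L(S)$ gives $\ind(\widehat S_{pm})=\ind(S)$. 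For the left-hand inequality, $\Con(\widehat S_{pm})\cap\pi^{-1}(pm)=\conv([S]_{pm})=p\cdot\conv(\tfrac1p[S]_{pm})$, so \autoref{thm_limit_KK} for $\widehat S_{pm}$ together with (ii) yields
\[
\lim_{n\to\infty}\frac{\#\big(n\star[S]_{pm}\big)}{n^qp^q}
=\frac{\Vol_q\big(\conv(\tfrac1p[S]_{pm})\big)}{\ind(S)}
\ge\frac{\Vol_q\big(\conv(K\cap\tfrac1pG(S_F))\big)}{\ind(S)},
\]
using $\tfrac1p[S]_{pm}\supseteq K\cap\tfrac1pG(S_F)$; since $K$ is full-dimensional with rational vertices and the mesh of $\tfrac1pG(S_F)$ shrinks to $0$, the right-hand side tends to $\Vol_q(K)/\ind(S)\ge(\Vol_q(\Delta(S))-\varepsilon)/\ind(S)$ as $p\to\infty$, hence exceeds $\lim_n \#[S]_{nm}/n^q-2\varepsilon$ for $p\gg0$; renaming $\varepsilon$ finishes it.

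The hard part — and the place where strong non-negativity is indispensable — is the construction of the single finitely generated $S_F$ that is good in all three respects at once: it must recover almost all of the volume of $\Delta(S)$, carry the entire level-zero subgroup $G(S)\cap\partial M$, and have the correct index $m(S)$. Strong non-negativity makes $\Con(S)$ pointed, which both gives Khovanskii's filling theorem content and ensures that the relative interior of $\Con(S)$ is covered by the finite subcones $\Con(F)$. The remaining ingredients — Ehrhart-type counting of lattice points in dilating rational polytopes and the behaviour of the integral volume under rescaling of the grading level — I would treat as routine, quoting the machinery behind \autoref{thm_limit_KK}.
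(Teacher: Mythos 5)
The paper does not actually prove this statement---it is quoted from Kaveh--Khovanskii (Theorem 1.27) and Lazarsfeld--Musta\c{t}\u{a} (Proposition 3.1)---so there is no internal proof to compare against. Your reconstruction is correct and follows the same route as those sources: approximate $S$ from inside by one finitely generated subsemigroup $S_F$ that simultaneously recovers most of $\Vol_q(\Delta(S))$, the level-zero subgroup $G(S)\cap\partial M$, and the index $m(S)$, then apply Khovanskii's filling theorem to $S_F$ to force $[S]_{pm}$ to contain all lattice points of a large $q$-dimensional polytope $pK$. The only places needing a touch more care are routine: a point of $\operatorname{relint}\Con(S)$ lies in $\operatorname{relint}\Con(F)$ for a finite $F\subseteq S$ with $L(F)=L(S)$ (take a small full-dimensional simplex around the point inside the cone and express its vertices as finite combinations), and for part (ii) the set $pK$ must contain pairs $y,\,y+v_j$ for a generating set $\{v_j\}$ of $G(S)\cap\partial M$, not merely a fundamental domain---both hold for $p\gg 0$ since $pK$ dilates linearly.
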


	\smallskip
	
	We now briefly recall Minkowski's theorem and the notion of mixed volume of convex bodies.
	Let $\bK = (K_1,\ldots,K_s)$ be a sequence of convex bodies in $\RR^d$.
	For any sequence $\lambda = (\lambda_1,\ldots,\lambda_s) \in \NN^s$ of non-negative integers, we denote by $\lambda \bK$ the Minkowski sum $\lambda \bK := \lambda_1 K_1+\cdots+\lambda_s K_s$ and by $\bK_\lambda$ the multiset $\bK_\lambda := \bigcup_{i=1}^s\bigcup_{j=1}^{\lambda_i} \{K_i\}$ of $\lambda_i$ copies of $K_i$ for each $1 \le i \le s$.
	Below is the classical Minkowski's theorem (see, e.g.,~\cite[Theorem 5.1.7]{SCHNEIDER}).
	
	\begin{theorem}[Minkowski]
		\label{thm_Minkowski}
		 $\Vol_{d}(\lambda \bK)$ is a homogeneous polynomial of degree $d$.	
	\end{theorem}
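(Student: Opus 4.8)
The plan is to prove this classical statement (a form of Minkowski's theorem, whose coefficients are the mixed volumes) by reducing to the case of polytopes and then inducting on the ambient dimension $d$; homogeneity of degree $d$ will come for free.

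\textbf{Homogeneity and reduction to polytopes.} For $t\in\ZZ_+$ one has $\Vol_d\big((t\lambda)\bK\big)=\Vol_d\big(t(\lambda\bK)\big)=t^d\,\Vol_d(\lambda\bK)$, since a homothety of ratio $t$ scales $d$-dimensional volume by $t^d$; hence any polynomial on $\NN^s$ agreeing with $\lambda\mapsto\Vol_d(\lambda\bK)$ is automatically homogeneous of degree $d$, and it suffices to produce a polynomial of degree $\le d$ with this property. Let $\mathcal{P}$ be the finite-dimensional $\RR$-space of polynomials in $s$ variables of degree $\le d$: a member of $\mathcal{P}$ is determined by its restriction to the grid $\{0,1,\dots,d\}^s$, so coefficientwise convergence in $\mathcal{P}$ coincides with pointwise convergence on that grid. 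Since $\Vol_d$ is continuous on the space of convex bodies of $\RR^d$ for the Hausdorff metric, Minkowski addition and dilation are continuous, and every convex body is a Hausdorff limit of polytopes, the following finishes the reduction: if the theorem holds for polytopes and $P_i^{(k)}\to K_i$ are polytopes, then $P^{(k)}(\lambda):=\Vol_d\big(\lambda(P_1^{(k)},\dots,P_s^{(k)})\big)\in\mathcal{P}$ converges pointwise on $\NN^s$ to $\lambda\mapsto\Vol_d(\lambda\bK)$, hence converges coefficientwise to some $P\in\mathcal{P}$, and $P$ agrees with $\lambda\mapsto\Vol_d(\lambda\bK)$ on all of $\NN^s$. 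So I may assume each $K_i$ is a polytope $P_i$ and write $P(\lambda):=\lambda_1P_1+\dots+\lambda_sP_s$.

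\textbf{The polytope case by induction on $d$.} The case $d=1$ (or even $d=0$) is immediate. For the inductive step, if $P(\lambda)$ is not full-dimensional — equivalently, $\sum_i\big(\mathrm{aff}(P_i)-\mathrm{aff}(P_i)\big)$ fails to span $\RR^d$, a condition not involving $\lambda$ — then $\Vol_d(P(\lambda))\equiv 0$ and there is nothing to prove. Otherwise, after translating each $P_i$ so that $0$ lies in its relative interior (always possible), the Minkowski sum of the resulting relative neighborhoods of $0$ is a neighborhood of $0$ in $\RR^d$, so $0\in\mathrm{int}(P(\lambda))$ for every $\lambda\in\ZZ_+^s$. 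I would then invoke two standard facts about Minkowski sums of polytopes: over the open positive orthant the normal fan of $P(\lambda)$ is independent of $\lambda$, so there is a fixed finite list of outer facet normals $u_1,\dots,u_N$ with corresponding facets $F_1(\lambda),\dots,F_N(\lambda)$; and the face of $P(\lambda)$ in direction $u_j$ equals $F_j(\lambda)=\sum_i\lambda_i F_{j,i}$, where $F_{j,i}$ is the face of $P_i$ maximizing $\langle u_j,\cdot\rangle$. With $h_Q(u)=\max_{x\in Q}\langle u,x\rangle$, the facet $F_j(\lambda)$ lies in the hyperplane $\{\langle u_j,\cdot\rangle=h_{P(\lambda)}(u_j)\}$, and $h_{P(\lambda)}(u_j)=\sum_i\lambda_i h_{P_i}(u_j)$ is linear in $\lambda$ and positive. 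Decomposing $P(\lambda)$ into pyramids with apex $0$ over its facets gives
\[
\Vol_d\big(P(\lambda)\big)\;=\;\sum_{j=1}^N\frac{h_{P(\lambda)}(u_j)}{d\,|u_j|}\;\Vol_{d-1}\big(F_j(\lambda)\big),
\]
with $\Vol_{d-1}(F_j(\lambda))$ computed in the $(d-1)$-plane containing $F_j(\lambda)$. Since the $F_{j,i}$ lie in parallel translates of $u_j^{\perp}$, the inductive hypothesis (applied in $u_j^{\perp}\cong\RR^{d-1}$ after a translation preserving $(d-1)$-volume) shows $\Vol_{d-1}(F_j(\lambda))$ is homogeneous of degree $d-1$ in $\lambda$; hence each summand is a linear form times a degree-$(d-1)$ form, i.e.\ homogeneous of degree $d$, and so is their sum. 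Together with the first paragraph, this proves the theorem.

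\textbf{Main obstacle.} The formal ingredients — homogeneity, the approximation passage, and the pyramid formula — are routine; the genuine content lies in the two polytope facts used in the inductive step (constancy of the normal fan of $P(\lambda)$ over the open orthant, and the identification of a face of a Minkowski sum with the Minkowski sum of the corresponding faces). One also has to take a little care with the degenerate low-dimensional configurations and with placing the apex $0$ in the interior of $P(\lambda)$ uniformly in $\lambda$ — which is why one first translates so that $0$ is in the relative interior of each $P_i$ — but none of this is serious. Alternatively, one may simply cite \cite[Theorem 5.1.7]{SCHNEIDER}.
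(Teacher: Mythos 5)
The paper does not actually prove this statement: it is quoted as the classical Minkowski theorem with a pointer to \cite[Theorem 5.1.7]{SCHNEIDER}, so there is no in-paper argument to compare yours against. What you have written is the standard textbook proof (polytope approximation plus induction on the ambient dimension via the pyramid decomposition over facets), and it is correct as far as it goes: the interpolation argument for passing from polytopes to general bodies, the uniform placement of the origin in the interior of $P(\lambda)$, and the dispatching of the degenerate case are all handled properly. The genuine content, as you note yourself, sits in the two polytope facts you invoke without proof --- constancy of the normal fan of $\lambda_1P_1+\cdots+\lambda_sP_s$ over the open orthant, and $F(A+B,u)=F(A,u)+F(B,u)$ for faces of Minkowski sums; in a self-contained write-up these would need their own proofs or citations, at which point one may as well cite Schneider for the whole statement, as the paper does. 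One small loose end to tidy: your induction establishes the identity only for $\lambda\in\ZZ_+^s$, whereas the statement (and the interpolation grid $\{0,1,\dots,d\}^s$ you use) concerns all of $\NN^s$. This is repaired either by interpolating on the grid $\{1,\dots,d+1\}^s$ instead and then extending to the boundary of the orthant by continuity of $\lambda\mapsto\Vol_d(\lambda\bK)$ in the real parameter $\lambda$, or by observing that your facet argument works verbatim for real $\lambda$ in the open positive orthant and both sides are continuous on its closure.
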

	 We write the polynomial $\Vol_{d}(\lambda \bK)$ as
	$$
	\Vol_{d}(\lambda \bK) = \sum_{{\dd \in \NN^{s}\; \lvert \dd \rvert=d}}\; \frac{1}{\dd!}\,\MV_d(\bK_\dd) \, \lambda^\dd,
	$$
	where $\MV_d(-)$ denotes the \emph{mixed volume}.
	
	\smallskip
	Next, we concentrate on describing the dimension of a subalgebra of an algebra finitely generated over a field.
	Following the terminology of \cite{KEMPER_TRUNG}, we say that these algebras are \emph{subfinite}.
	Let $\kk$ be a field and $A$ be a $\kk$-algebra. 
	We shall always denote by $\dim(A)$ the Krull dimension of $A$.
	We say that $A$ is \emph{subfinite over $\kk$} if there exists a finitely generated $\kk$-algebra $B$ containing $A$.
	Denote by $\delta_\kk(A)$ the maximal number of elements in $A$ which are algebraically independent over $\kk$, that is, 
	$$
	\delta_\kk(A)  := \sup\big\{ d \mid \text{there exists } x_1,\ldots,x_d \in A \text{ which are algebraically independent over } \kk \big\}.
	$$
	Equivalently, $\delta_\kk(A) = d$ if $d$ is the largest integer such that there is an inclusion $\kk[x_1,\ldots,x_d] \hookrightarrow A$ where $\kk[x_1,\ldots,x_d]$ is a polynomial ring over $\kk$.
	
	\begin{remark}
		If $A$ is finitely generated over $\kk$, then $\dim(A) = \delta_\kk(A)$.
		This is a classical result that follows for instance from the Noether normalization theorem (see, e.g.,~\cite[Theorem 13.3]{EISEN_COMM}).
	\end{remark}
	
	The following theorem  extends the above remark to algebras that are subfinite over $\kk$ (cf. \cite{GIRAL}).
	
	\begin{theorem}[{\cite[Theorem 4.6, Corollary 4.7]{KEMPER_TRUNG}}]
		\label{thm_dim_subfinite_alg}
		Let $A$ be a subfinite algebra over $\kk$.
		Then, the following statements hold:
		\begin{enumerate}[(i)]
			\item $\dim(A) = \delta_\kk(A)$.
			\item If $B \supset A$ is a subfinite algebra over $\kk$, then $\dim(B) \ge \dim(A)$.
		\end{enumerate}	
	\end{theorem}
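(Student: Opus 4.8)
The plan is to prove statement (i) first — bootstrapping from the finitely generated case, in which $\dim = \delta_\kk$ is exactly Noether normalization — and then to derive (ii) as an easy corollary. Throughout, fix a finitely generated $\kk$-algebra $B$ with $A \subseteq B$.

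First I would prove the bound $\dim(A) \le \delta_\kk(A)$. Write $A = \bigcup_i A_i$ as the directed union of its finitely generated $\kk$-subalgebras; each $A_i \subseteq B$ is finitely generated, so $\dim(A_i) = \delta_\kk(A_i) \le \dim(B) < \infty$, and since any finite algebraically independent subset of $A$ lies in some $A_i$ we get $\delta_\kk(A) = \sup_i \delta_\kk(A_i) = \sup_i \dim(A_i)$, a supremum that is attained. Given a chain $\mathfrak p_0 \subsetneq \cdots \subsetneq \mathfrak p_e$ in $A$, choose witnesses $x_j \in \mathfrak p_j \setminus \mathfrak p_{j-1}$ and an index $i$ with $x_1,\dots,x_e \in A_i$; then $\mathfrak p_0 \cap A_i \subsetneq \cdots \subsetneq \mathfrak p_e \cap A_i$ is a chain in $A_i$, so $e \le \dim(A_i) \le \delta_\kk(A)$. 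Next I would reduce the reverse inequality to the case of a domain: replacing $A \subseteq B$ by $A_{\mathrm{red}} \subseteq B_{\mathrm{red}}$ (which changes neither invariant, as nilpotents of $A$ are nilpotents of $B$) we may assume both reduced, so $B$ embeds in a finite product of finitely generated $\kk$-domains, and contracting the minimal primes of that product to $A$ shows $A$ has only finitely many minimal primes $\mathfrak p$, each with $A/\mathfrak p$ a subfinite domain; since $\dim(A) = \max_\mathfrak p \dim(A/\mathfrak p)$ and $\delta_\kk(A) = \max_\mathfrak p \delta_\kk(A/\mathfrak p)$ (for the latter use that $\bigcap_\mathfrak p\,(\mathfrak p \cap \kk[x_1,\dots,x_d]) = 0$ in $\kk[x_1,\dots,x_d]$ when $x_1,\dots,x_d \in A$ are algebraically independent, so $\mathfrak p \cap \kk[x_1,\dots,x_d] = 0$ for some $\mathfrak p$), it suffices to show $\dim(A) \ge d := \trdeg_\kk \Quot(A)$ for a subfinite domain $A$.

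For this last inequality I would induct on $d$, the case $d = 0$ being trivial. When $d \ge 1$, fix $x \in A$ transcendental over $\kk$ and set $S := \kk[x]\setminus\{0\}$; then $S^{-1}A \subseteq S^{-1}B$ with $S^{-1}B$ a finitely generated $\kk(x)$-algebra, so $S^{-1}A$ is a subfinite domain over $\kk(x)$ with $\trdeg_{\kk(x)}\Quot(S^{-1}A) = d-1$, and the induction hypothesis together with the upper bound gives $\dim(S^{-1}A) = d-1$. A chain of length $d-1$ in $S^{-1}A$ pulls back to a chain $0 = \mathfrak q_0 \subsetneq \cdots \subsetneq \mathfrak q_{d-1}$ in $A$ whose members meet $\kk[x]$ only in $0$, and the argument would conclude by exhibiting a prime of $A$ strictly above $\mathfrak q_{d-1}$, i.e., by showing $\mathfrak q_{d-1}$ is not a maximal ideal. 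This is the step I expect to be the main obstacle: were $\mathfrak q_{d-1}$ maximal, $A/\mathfrak q_{d-1}$ would be a field containing the transcendental element $\bar x$, which one wants to rule out using the fact that a subfinite extension field $L$ of $\kk$ is algebraic over $\kk$ — indeed, if $L \subseteq C$ with $C$ a finitely generated $\kk$-domain then $C$ is also a finitely generated $L$-algebra, so $\dim(C) = \trdeg_L \Quot(C)$ while $\dim(C) = \trdeg_\kk\Quot(C) = \trdeg_\kk L + \trdeg_L\Quot(C)$, forcing $\trdeg_\kk L = 0$. The gap to be filled is precisely that $A/\mathfrak q_{d-1}$ need not obviously be subfinite: what is really needed is control over the residue fields of subfinite algebras at their maximal ideals, and this is the technical heart of the theorem.

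Finally, part (ii) is an immediate consequence of (i): if $A \subseteq B$ are both subfinite over $\kk$, any algebraically independent subset of $A$ stays algebraically independent in $B$, so $\delta_\kk(A) \le \delta_\kk(B)$, and therefore $\dim(A) = \delta_\kk(A) \le \delta_\kk(B) = \dim(B)$.
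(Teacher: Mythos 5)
This theorem is stated in the paper with a citation to Kemper--Trung (and cf.\ Giral), without proof; there is no in-paper argument to compare against, so the comparison is really with the cited source. Your upper bound $\dim(A)\le\delta_\kk(A)$ via contracting a chain to a large enough finitely generated $A_i$ is correct, your reduction to a subfinite domain is fine, and your derivation of (ii) from (i) is immediate. But the lower bound $\dim(A)\ge\delta_\kk(A)$ for a subfinite domain has a real gap, which you correctly identify: after localizing at $S=\kk[x]\setminus\{0\}$ and pulling back a chain of length $d-1$ to $A$, you are left needing to know that the top prime $\mathfrak q_{d-1}$ is not maximal. Your suggested route is to argue that $A/\mathfrak q_{d-1}$ is subfinite, hence an algebraic extension of $\kk$ if it were a field; but $A/\mathfrak q_{d-1}$ is a quotient, not a subalgebra, and the needed embedding into a finitely generated $\kk$-algebra would come from lying over for the inclusion $A\subseteq B$, which fails for arbitrary subalgebras of affine rings (the image of $\Spec B\to\Spec A$ is only a dense constructible set). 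Equivalently, the assertion ``residue fields of subfinite $\kk$-algebras at maximal ideals are algebraic over $\kk$'' is essentially as hard as the theorem itself, so the localization--induction scheme does not close without new input.

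Kemper and Trung's actual argument takes a fundamentally different route that avoids lying over entirely: they reduce to a subalgebra $A$ of a polynomial ring and prove that $\dim(A)=\dim(\mathrm{in}_{\prec}(A))$ for a suitable weight/monomial order, where $\mathrm{in}_\prec(A)$ is the initial algebra; for a monomial algebra the Krull dimension is the rank of the lattice generated by the exponent vectors, which equals the transcendence degree, and $\delta_\kk$ is preserved under passing to the initial algebra. That combinatorial detour replaces the prime-chain surgery that stalls in your approach. If you want to keep the localization--induction framework you would need an independent proof of the residue-field statement (this is where Giral's and Onoda-type arguments operate), but as written the step from $\dim(A)\ge d-1$ to $\dim(A)\ge d$ is not justified.
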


	\section{Singly graded algebras of almost integral type}
	\label{sect_single_grad}
	
	In this section, we extend the results of \cite[Part II]{KAVEH_KHOVANSKII} regarding the asymptotic behavior of algebras of \emph{almost integral type}.
	Our results are slightly more complete than the ones in \cite[Part II]{KAVEH_KHOVANSKII} with respect to the following points: 
	\begin{enumerate}
		\item We consider an arbitrary field $\kk$ instead of assuming that $\kk$ is algebraically closed.
		\item We substitute the field $\FF$ by any reduced $\kk$-algebra  $R$.
		\item We show that the asymptotic growth of an algebra of almost integral type is determined by its Krull dimension. 
	\end{enumerate}
	Similar results to the ones in this section were also obtained in \cite{cutkosky2014} for graded linear series.

	Let $\kk$ be an arbitrary field and $R$ be a reduced $\kk$-algebra.
	Let $R[t]$ be a standard graded polynomial ring over $R$. 
	Below we introduce the notion of almost integral type in our current setting. 
	
	\begin{definition}
		\begin{enumerate}[(i)]
			\item A graded $\kk$-algebra $A \subset R[t]$ is called of \emph{integral type} if $A$ is finitely generated over $\kk$ and is a finitely generated module over the subalgebra generated by $[A]_1$.
			\item A graded $\kk$-algebra $A \subset R[t]$ is called of \emph{almost integral type} if $A \subset B \subset R[t]$, where $B$ is a graded algebra of integral type.
		\end{enumerate}
	\end{definition}

	Notice that, if $A \subset R[t]$ is a graded $\kk$-algebra of almost integral,  then $\dim_{\kk}\left([A]_n\right) < \infty$ for all $n \in \ZZ$.
	By definition, one has that a graded $\kk$-algebra of almost integral type is a subfinite algebra over $\kk$.
	For any positively graded $\kk$-algebra $A$ which is subfinite over $\kk$, we denote by $G^i(A)$ the finitely generated graded $\kk$-algebra that is generated by the graded components $[A]_0,\ldots,[A]_i$, that is, 
	$$
	G^i(A) \,:=\, \kk\big[[A]_0,\ldots,[A]_i\big] \, \subset \, A.
	$$ 
	
	First, we discuss some properties of graded $\kk$-algebras that are subfinite over $\kk$.
	
	\begin{lemma}
		\label{lem_basic_prop}
		Let $A$ be a positively graded $\kk$-algebra which is subfinite over $\kk$.
		Then, the following statements hold:
		\begin{enumerate}[(i)]
			\item There exists $i_0 \in \NN$ such that $\dim(A) = \dim\left(G^i(A)\right)$ for all $i \ge i_0$.
			\item Suppose that $d = \dim(A) > 0$. Then there exists $h \in \ZZ_+$ and $\alpha \in \RR_{+}$ such that 
			$$
			\dim_{\kk}\big([A]_{nh}\big) > \alpha n^{d-1}
			$$
			for all $n > 0$.
		\end{enumerate}
	\end{lemma}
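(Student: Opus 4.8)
The plan is to handle the two statements separately; part~(i) is essentially formal, and the real work is in part~(ii).

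\emph{Part (i).} First I would note that the $G^i(A)$ form an ascending chain of finitely generated graded $\kk$-subalgebras of $A$ whose union is $A$ (a homogeneous element of degree $n$ already lies in $G^n(A)$). By \autoref{thm_dim_subfinite_alg} we have $\dim G^i(A) = \delta_\kk\big(G^i(A)\big)$, which is non-decreasing in $i$ and bounded above by $\delta_\kk(A) = \dim(A) = d$. Choosing $x_1,\dots,x_d \in A$ algebraically independent over $\kk$ and letting $i_0$ be larger than every degree occurring among the homogeneous components of $x_1,\dots,x_d$ puts all the $x_j$ into $G^{i_0}(A)$, so $\delta_\kk\big(G^{i_0}(A)\big) \ge d$; hence $\dim G^i(A) = d$ for all $i \ge i_0$.

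\emph{Part (ii).} Using~(i), I would fix $i_0$ with $B := G^{i_0}(A)$ a finitely generated $\kk$-algebra of dimension $d$; since $[B]_n \subseteq [A]_n$, it suffices to bound $\dim_\kk [B]_{nh}$ from below. After the routine reductions of killing the nilpotents of the finite-dimensional (hence Artinian) ring $[B]_0$, passing to the idempotent factor of $B$ of maximal Krull dimension, and replacing $\kk$ by the residue field $\kk'$ of $[B]_0$ — each of which only decreases the graded pieces, so preserves a lower bound — I may assume $[B]_0 = \kk'$ is a field. Then graded Noether normalization yields homogeneous elements $\theta_1,\dots,\theta_d \in B$ of positive degrees $b_1,\dots,b_d$, algebraically independent over $\kk'$, i.e.\ a graded polynomial subring $\kk'[\theta_1,\dots,\theta_d] \subseteq B$.

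Now I would set $h := \operatorname{lcm}(b_1,\dots,b_d) \in \ZZ_+$. For each integer $n \ge 1$ and each $(f_1,\dots,f_d) \in \NN^d$ with $f_1+\dots+f_d = n$, the monomial $\theta_1^{(h/b_1)f_1}\cdots\theta_d^{(h/b_d)f_d}$ is a nonzero element of $[B]_{nh}$, and distinct tuples $(f_1,\dots,f_d)$ give $\kk$-linearly independent monomials; hence
$$
\dim_\kk\big([A]_{nh}\big) \;\ge\; \dim_\kk\big([B]_{nh}\big) \;\ge\; \binom{n+d-1}{d-1} \;\ge\; \frac{n^{d-1}}{(d-1)!}.
$$
Taking $\alpha := \tfrac{1}{2(d-1)!} \in \RR_+$ then gives $\dim_\kk\big([A]_{nh}\big) > \alpha\, n^{d-1}$ for all $n > 0$, which is the claim.

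The step I expect to be the crux is producing a \emph{graded} polynomial subring of $B$: one cannot simply pass to homogeneous components of an arbitrary algebraically independent family, so reducing to the case where $[B]_0$ is a field and invoking graded Noether normalization (which produces a homogeneous system of parameters) is what makes things go; and choosing $h$ to be the least common multiple of the degrees $b_i$ is exactly what converts the monomial count into a bound valid for \emph{all} $n>0$ rather than merely asymptotically. The ancillary facts — finite generation of the $G^i(A)$ and monotonicity of $\delta_\kk(\cdot)$ under inclusions of subfinite algebras — are standard.
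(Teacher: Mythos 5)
Your proof is correct, and for part~(ii) it takes a genuinely different route from the paper's. For part~(i) your argument is the same as the paper's (which simply cites \autoref{thm_dim_subfinite_alg}), just with the details of how $\delta_\kk(G^{i_0}(A)) \ge d$ is achieved written out: you pick $x_1,\dots,x_d \in A$ algebraically independent and let $i_0$ bound the degrees of their homogeneous components. For part~(ii) the paper takes $C = G^i(A)$ with $\dim C = d$, passes to a Veronese subalgebra $C^{(h)}$ that is standard graded over the Artinian ring $[A]_0$ (via G\"ortz--Wedhorn), and then invokes the Hilbert polynomial: $\operatorname{length}_{[A]_0}([C]_{nh})$ agrees with a polynomial of degree $d-1$ for $n \gg 0$, which gives the asymptotic lower bound. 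You instead first reduce $[B]_0$ to a field $\kk'$ (by killing nilpotents and taking the idempotent factor of maximal dimension --- each step only shrinks graded pieces, and $\dim_\kk \ge \dim_{\kk'}$, so lower bounds propagate the right way), apply graded Noether normalization to produce a homogeneous polynomial subring $\kk'[\theta_1,\dots,\theta_d]$ in degrees $b_1,\dots,b_d$, and count the monomials of degree $nh$ with $h = \operatorname{lcm}(b_i)$. Your approach avoids Hilbert polynomial theory and is more explicit; the choice of $h$ as the lcm is a nice way to make the bound hold for every $n>0$ without any ``for $n\gg 0$'' caveat (though the paper's conclusion is also stated for all $n>0$, one has to adjust $\alpha$ after the asymptotic statement, exactly as you do). Conversely, the paper's length-over-$[A]_0$ argument sidesteps the reductions to a field base entirely. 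Both are clean; yours is arguably the more self-contained argument.
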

	\begin{proof}
		(i) This part follows from \autoref{thm_dim_subfinite_alg}.
		
		(ii) Let $C = G^i(A)$ such that $\dim(C) = d$.
		Since $C$ is finitely generated over $\kk$, by \cite[Lemma 13.10, Remark 13.11]{GORTZ_WEDHORN}, one has a positive integer $h > 0$ such that the Veronese subalgebra $C^{(h)} = \bigoplus_{n =0}^\infty [C]_{nh}$ is a standard graded $\kk$-algebra.
		The ring $[A]_0$ is Artinian because by assumption it is a finite dimensional vector space over $\kk$.
		As $\dim(C^{(h)}) = \dim(C) = d$,  by \cite[Corollary of Theorem 13.2]{MATSUMURA} the Hilbert polynomial of $C^{(h)}$ has degree $d-1$ and it coincides with $\text{length}_{[A]_0}\left([C]_{nh}\right)$ for $n \gg 0$. 
		Therefore, there exists a positive real number $\alpha$ such that $\dim_{\kk}([A]_{nh}) \ge \dim_{\kk}([C]_{nh})  > \alpha n^{d-1}$ for all $n > 0$.
	\end{proof}
	
	\begin{lemma}
		\label{lem_dim_short_exact_seq}
		Let $A$, $B$ and $C$ be graded $\kk$-algebras which are subfinite over $\kk$.
		Suppose there exists $n_0 \ge 0$ such that for all $n \ge n_0$ we have a short exact sequence $0 \rightarrow [A]_n \rightarrow [B]_n \rightarrow [C]_n \rightarrow 0$.
		Then $\dim(B) = \max\{\dim(A), \dim(C)\}$.
	\end{lemma}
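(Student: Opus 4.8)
The plan is to convert the statement into one about the growth of Hilbert functions and then use that the Krull dimension of each of the algebras at hand is read off from the polynomial order of growth of its Hilbert function. Write $h_X(n) := \dim_\kk [X]_n$. The assumed short exact sequences of $\kk$-vector spaces give the additivity
\[
h_B(n) \;=\; h_A(n) + h_C(n) \qquad\text{for all } n \ge n_0 .
\]
So the lemma follows once we know the following dichotomy for each $X \in \{A,B,C\}$ (here one uses that $A,B,C$ are \emph{reduced}, which holds in the setting of this section since the algebras in play are graded subalgebras of $R[t]$ with $R$ reduced, and which is genuinely needed): setting $e := \dim(X)$, the function $h_X$ has polynomial order of growth exactly $e-1$; precisely, $h_X(n) = 0$ for $n \gg 0$ when $e = 0$, while for $e \ge 1$ there are real numbers $0 < \alpha \le \beta$ and an integer $h \ge 1$ with $\alpha\,n^{e-1} < h_X(nh)$ for all $n > 0$ and $h_X(n) \le \beta\,(n^{e-1}+1)$ for all $n$.

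\textbf{Deducing the lemma from the dichotomy.} Put $d := \max\{\dim(A),\dim(C)\}$. If $d = 0$, then $h_A$ and $h_C$, hence $h_B = h_A + h_C$, vanish for $n \gg 0$, so $\dim(B) = 0 = d$. If $d \ge 1$, pick $X_0 \in \{A,C\}$ with $\dim(X_0) = d$. From $h_B \ge h_{X_0}$ and the lower bound $\alpha\,n^{d-1} < h_{X_0}(nh) \le h_B(nh)$ we cannot have $\dim(B) \le d-1$: in that case the upper bound (or the vanishing, if $\dim(B)=0$) would give $h_B(n)/n^{d-1} \to 0$, contradicting $h_B(nh) > \alpha\,n^{d-1}$. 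Hence $\dim(B) \ge d$. Conversely, the upper bounds for $A$ and $C$ give $h_B(n) = h_A(n)+h_C(n) \le \beta\,(n^{d-1}+1)$, and the lower bound applied to $B$ then forces $\dim(B) - 1 \le d-1$. Therefore $\dim(B) = d$, as claimed.

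\textbf{Proving the dichotomy; the main obstacle.} The vanishing for $e = 0$ is easy: by \autoref{thm_dim_subfinite_alg} such an $X$ is integral over $\kk$, and comparing degrees in a monic integral dependence relation over $\kk$ shows every homogeneous element of positive degree is nilpotent, hence zero by reducedness. The lower bound for $e \ge 1$ is exactly \autoref{lem_basic_prop}(ii). The real difficulty is the upper bound $h_X(n) = O(n^{\dim(X)-1})$. I would prove it in two steps. First, reduce to a domain: the minimal primes of the graded ring $X$ are homogeneous and finite in number (since $X$ embeds in a finitely generated $\kk$-algebra), and their intersection is $(0)$ by reducedness, so $X \hookrightarrow \prod_i X/\mathfrak{p}_i$ as graded rings; hence $h_X(n) \le \sum_i h_{X/\mathfrak{p}_i}(n)$, while $\dim(X) = \max_i \dim(X/\mathfrak{p}_i)$ and each $X/\mathfrak{p}_i$ is again a graded subfinite $\kk$-domain with finite-dimensional graded pieces. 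Second, for such a domain $Y$ with $\dim(Y) = e$ one must bound $h_Y(n)$ by embedding $Y$ into a \emph{finitely generated} graded $\kk$-algebra $\overline{Y}$ with $\dim(\overline{Y}) = e$ and still finite-dimensional graded pieces; then $h_Y(n) \le h_{\overline{Y}}(n)$, and for $n \gg 0$ the latter equals the Hilbert polynomial of $\overline{Y}$, of degree $e-1$ (invoking \autoref{lem_basic_prop}(i) and the classical Hilbert polynomial, after using a Veronese to make $\overline{Y}$ standard graded over its Artinian degree-zero part). Constructing this auxiliary algebra $\overline{Y}$ — which is where one genuinely uses that the algebras are of almost integral type rather than merely subfinite (one cuts down the "integral type" hull from the definition), and where one must keep the degree-zero part finite-dimensional so that all graded pieces stay finite-dimensional — is the technical heart of the argument. (Alternatively, if it is not circular with the present lemma, one could invoke \autoref{thm_A}: it yields, for a common $m$, the existence of limits $\lim_{n} h_X(nm)/n^{\dim(X)-1} \in \RR_{+}$ for $X \in \{A,B,C\}$ with $\dim X \ge 1$, and feeding these into $h_B(nm) = h_A(nm) + h_C(nm)$ gives $\dim(B)-1 = \max\{\dim(A),\dim(C)\}-1$ at once.)
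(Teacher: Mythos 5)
Your reduction to the additivity $h_B=h_A+h_C$ and the lower bound from \autoref{lem_basic_prop}(ii) are fine, but the upper bound $h_X(n)=O(n^{\dim(X)-1})$ for a merely \emph{subfinite} graded algebra $X$ is precisely what you do not establish, and you yourself flag it as ``the technical heart.'' The route you sketch for it — embed a subfinite graded domain $Y$ into a finitely generated graded $\kk$-algebra $\overline{Y}$ with $\dim(\overline{Y})=\dim(Y)$ and finite-dimensional graded pieces — is not obviously available under the hypotheses of the lemma: a generic finitely generated overring of a subfinite $Y$ may well have strictly larger dimension, and you acknowledge needing the stronger ``almost integral type'' structure, which the lemma does not assume. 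You also import a reducedness hypothesis (for the $\dim=0$ case and to split into finitely many minimal primes) that is not in the statement. Finally, the alternative you mention — invoking \autoref{thm_A} — is indeed circular: \autoref{thm_limit_graded} rests on \autoref{lem_decompose}, which rests on the present lemma.

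The paper's actual argument sidesteps the upper-bound problem entirely, and this is the idea missing from your proposal. After a Veronese shift, one picks $i$ large enough (via \autoref{thm_dim_subfinite_alg}) so that $G^i(B)\cap A$, $G^i(B)$, $G^i(C)$ already carry the full dimensions $\dim(A)$, $\dim(B)$, $\dim(C)$. The short exact sequence of graded pieces then gives a short exact sequence of irrelevant ideals $0\to[G^i(B)\cap A]_+\to[G^i(B)]_+\to[G^i(C)]_+\to 0$; since $[G^i(B)\cap A]_+$ is a kernel of a map of finitely generated $G^i(B)$-modules, it is a finitely generated ideal, whence $G^i(B)\cap A$ is a finitely generated $\kk$-algebra. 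At that point all three algebras are finitely generated, a further Veronese makes them standard graded, and the claim follows from ordinary additivity of Hilbert polynomials. No growth estimate for the non-finitely-generated $A$ is ever needed, and no reducedness is used. If you want to salvage your Hilbert-function route, you would have to supply the missing upper bound under the subfinite hypothesis, or first prove the ``$G^i(B)\cap A$ is finitely generated'' step — but that step is exactly the paper's proof.
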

	\begin{proof}
		After choosing some $h \ge n_0$, we  substitute $A$, $B$ and $C$ by the Veronese subalgebras $A^{(h)}$, $B^{(h)}$ and $C^{(h)}$, respectively.
		Thus we may assume that $0 \rightarrow [A]_n \rightarrow [B]_n \rightarrow [C]_n \rightarrow 0$ is exact for all $n \ge 1$.
		
		By invoking \autoref{thm_dim_subfinite_alg}, we choose $i > 0$ such that $\dim(G^i(B) \cap A) = \dim(A)$, $\dim(G^i(B)) = \dim(B)$ and $\dim(G^i(C)) = \dim(C)$.
		Notice that we have the short exact sequence 
		$$
		0 \rightarrow \left[G^i(B) \cap A\right]_+ \rightarrow \left[G^i(B)\right]_+ \rightarrow \left[G^i(C)\right]_+ \rightarrow 0,
		$$
		where $[S]_+ = \bigoplus_{n=0}^\infty [S]_n$ for any graded $\kk$-algebra $S$.
		From the above short exact sequence, we obtain that the ideal $\left[G^i(B) \cap A\right]_+$ is finitely generated (as it is an ideal over $G^i(B)$), which implies that $G^i(B) \cap A$ is a finitely generated graded $\kk$-algebra (see, e.g.,~\cite[Proposition 1.5.4]{BRUNS_HERZOG}).
		We substitute $A$, $B$ and $C$ by $G^i(B) \cap A$, $G^i(B)$ and $G^i(C)$, respectively. 
		So, we assume that $A$, $B$ and $C$ are finitely generated $\kk$-algebras.
		
		Finally, by standard arguments we can show that $\dim(B) = \max\{ \dim(A), \dim(C)\}$.
		We choose $h > 0$ such that $0 \rightarrow A^{(h)} \rightarrow B^{(h)} \rightarrow C^{(h)} \rightarrow 0$ is a short exact sequence of standard graded $\kk$-algebras (see, e.g.,~\cite[Lemma 13.10, Remark 13.11]{GORTZ_WEDHORN}), and then the result follows from the additivity of Hilbert polynomials and \cite[Corollary of Theorem 13.2]{MATSUMURA}.
	\end{proof}
	
	The following easy remark shows that we can always substitute $R$ by a reduced finitely generated $\kk$-algebra (cf. \cite[Proposition 2.25]{KAVEH_KHOVANSKII}).
	
	\begin{remark}
		\label{rem_finite_gen}
		Let $A \subset R[t]$ be a graded $\kk$-algebra of almost integral type. 
		By definition, let $B \supset A$ be a graded $\kk$-algebra of integral type and suppose that $f_{i_1}t^{i_1}, \ldots, f_{i_m}t^{i_m}$ are generators of $B$ as a $\kk$-algebra.
		Let $R' \subset R$ be the reduced $\kk$-algebra generated by the elements $f_{i_1},\ldots,f_{i_m} \in R$.
		So, one has that $A \subset R'[t]$ with $R'$ being a reduced finitely generated $\kk$-algebra. 
	\end{remark}
	
	Thus, as a consequence of the above remark, we now safely assume that $R$ is a reduced finitely generated algebra.
	Hence, let $\pp_1,\ldots,\pp_\ell \in \Spec(R)$ be the minimal primes of $R$.
	Since $R$ is assumed to be reduced, we have a canonical inclusion 
	$
	R[t] \, \hookrightarrow \, \bigoplus_{i=1}^\ell R/\pp_i[t]
	$. 
	Let $A \subset R[t]$ be a graded $\kk$-algebra of almost integral type.
	For each $1 \le i \le \ell$, let $B^i$ be the $\kk$-subalgebra defined by 
	\begin{equation}
		\label{eq_alg_B^i}
		\left[B^i\right]_n \,:= \, \begin{cases}
			\kk     \qquad\quad \;\, \text{ if } n = 0 \\
			\left[M^i\right]_n \quad\, \text{ if } n > 0
		\end{cases}
	\end{equation}
	where
	$M^i := \left(A \cap \pp_1R[t] \cap \cdots \cap \pp_{i-1}R[t]\right) / \left(A \cap \pp_1R[t] \cap \cdots \cap \pp_{i-1}R[t] \cap \pp_{i}R[t]\right)$.
	By construction, $B^i$ is a graded  $\kk$-algebra of almost integral type and we have a canonical inclusion $B^i \hookrightarrow R/\pp_i[t]$.
	
	\begin{lemma}
		\label{lem_decompose}
		Under the above notation, the following statements hold:
		\begin{enumerate}[(i)]
			\item $\dim\left(A\right) = \max\big\{\dim\left(B^i\right) \mid 1 \le i \le \ell \big\}$.
			\item For all $n > 0$, one has $\dim_{\kk}\left([A]_n\right) = \sum_{i=1}^{\ell} \dim_{\kk}\left(\left[B^i\right]_n\right)$.
		\end{enumerate}
	\end{lemma}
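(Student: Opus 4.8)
The plan is to hang everything on the descending chain of graded ideals of $A$,
$$
A \;=\; A_0 \;\supseteq\; A_1 \;\supseteq\; \cdots \;\supseteq\; A_\ell, \qquad A_i \;:=\; A \cap \pp_1R[t] \cap \cdots \cap \pp_iR[t],
$$
for which $A_i = A_{i-1}\cap\pp_iR[t]$, so that $M^i = A_{i-1}/A_i$ and hence $[B^i]_n = [A_{i-1}/A_i]_n$ for all $n > 0$. Since $R$ is reduced, $\pp_1\cap\cdots\cap\pp_\ell = (0)$, and as $R[t]$ is $R$-free this gives $\pp_1R[t]\cap\cdots\cap\pp_\ell R[t] = (0)$, so $A_\ell = (0)$. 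Consequently, for each $1 \le i \le \ell$ and each $n > 0$ we have a short exact sequence of finite-dimensional $\kk$-vector spaces
$$
0 \;\longrightarrow\; [A_i]_n \;\longrightarrow\; [A_{i-1}]_n \;\longrightarrow\; [B^i]_n \;\longrightarrow\; 0 .
$$
Part (ii) then drops out immediately: taking $\kk$-dimensions yields $\dim_\kk[A_{i-1}]_n = \dim_\kk[A_i]_n + \dim_\kk[B^i]_n$, and summing over $i = 1,\ldots,\ell$ together with $[A_\ell]_n = 0$ telescopes to $\dim_\kk[A]_n = \sum_{i=1}^\ell\dim_\kk[B^i]_n$.

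For part (i), I would promote this chain of ideals to a chain of graded $\kk$-algebras to which \autoref{lem_dim_short_exact_seq} can be applied repeatedly. Set $C_0 := A$, and for $1 \le i \le \ell$ let $C_i$ be the graded $\kk$-algebra with $[C_i]_0 = \kk$ and $[C_i]_n = [A_i]_n$ for $n > 0$; since $A_i$ is an ideal of $A$, each $C_i$ is a graded $\kk$-subalgebra of $A$, hence subfinite over $\kk$, with $C_\ell = \kk$. Each $B^i$ is also subfinite over $\kk$, thanks to the canonical inclusion $B^i \hookrightarrow R/\pp_i[t]$ into a finitely generated $\kk$-algebra. By the displayed short exact sequences, for every $n \ge 1$ we get an exact sequence $0 \to [C_i]_n \to [C_{i-1}]_n \to [B^i]_n \to 0$ (for $i = 1$ this reads $0 \to [A_1]_n \to [A]_n \to [B^1]_n \to 0$), so \autoref{lem_dim_short_exact_seq}, applied with $n_0 = 1$, yields $\dim(C_{i-1}) = \max\{\dim(C_i), \dim(B^i)\}$ for each $1 \le i \le \ell$. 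Chaining these equalities from $i = 1$ to $i = \ell$ gives $\dim(A) = \dim(C_0) = \max\{\dim(C_\ell), \dim(B^1), \ldots, \dim(B^\ell)\}$, and since $\dim(C_\ell) = \dim(\kk) = 0 \le \dim(B^i)$ this is exactly $\max\{\dim(B^1), \ldots, \dim(B^\ell)\}$.

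The step to be careful with — really the only non-formal point — is the degree-$0$ bookkeeping: the ideals $A_i$ are not $\kk$-subalgebras, and passing to the $C_i$ (which makes the short exact sequences hold only in degrees $\ge 1$, which is permitted in \autoref{lem_dim_short_exact_seq}) is what renders that lemma applicable. The lemma further requires the quotients $B^i$ to be subfinite over $\kk$, which is precisely why each $B^i$ was built to embed into $R/\pp_i[t]$. Everything else — the identity $A_i = A_{i-1}\cap\pp_iR[t]$, the vanishing $A_\ell = (0)$, and the telescoping of dimensions — is purely formal.
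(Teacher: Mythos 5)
Your proof is correct and follows essentially the same approach as the paper: both filter $A$ by the chain $A_i = A \cap \pp_1 R[t] \cap \cdots \cap \pp_i R[t]$ with $A_\ell = 0$ by reducedness, and both invoke \autoref{lem_dim_short_exact_seq} to telescope dimensions. The only cosmetic difference is the choice of short exact sequence — you use $0 \to A_i \to A_{i-1} \to M^i \to 0$ (chaining downward from $C_0 = A$ to $C_\ell = \kk$), whereas the paper uses the dual $0 \to M^i \to A/A_i \to A/A_{i-1} \to 0$ (chaining upward from $A/A_0 = 0$ to $A/A_\ell = A$) — and your attention to the degree-$0$ adjustment needed to turn the ideals $A_i$ into $\kk$-algebras $C_i$ is a point the paper leaves implicit.
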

	\begin{proof}
		For each $1 \le i \le \ell$ we have the short exact sequence
		$$
		0 \,\rightarrow\, M^i \,\rightarrow\, \frac{A}{A \cap \pp_1R[t] \cap \cdots \cap \pp_{i-1}R[t] \cap \pp_{i}R[t]} \,\rightarrow\, \frac{A}{A \cap \pp_1R[t] \cap \cdots \cap \pp_{i-1}R[t]} \rightarrow 0.
		$$
		Since $R$ is reduced, $\pp_1 R[t] \cap \cdots\cap \pp_\ell R[t] = 0$.
		Therefore, part (ii) is clear and part (i) follows from \autoref{lem_dim_short_exact_seq}.
	\end{proof}
	
	The following theorem contains the main result of this section. 
	By using the general arguments above, the first idea in the proof is to reduce to the case when $R$ is a domain. 
	For organizational purposes, in the next subsection, we encapsulate a proof of \autoref{thm_limit_graded} under the assumptions of $R$ being a finitely generated $\kk$-domain (see \autoref{thm_limit_graded_domain}).
	
	\begin{theorem}
		\label{thm_limit_graded}
		Let $\kk$ be a field and $R$ be a reduced $\kk$-algebra.
		Let $A \subset R[t]$ be a graded $\kk$-algebra of almost integral type. 
		Suppose $d = \dim(A) > 0$.
		Then, there exists an integer $m > 0$ such that the limit 
		$$
		\lim_{n\to \infty} \frac{\dim_{\kk}\left([A]_{nm}\right)}{n^{d-1}}  \;\in\; \RR_{+}
		$$
		exists and it is a positive real number.
	\end{theorem}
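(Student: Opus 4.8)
The plan is to reduce in two stages to the case where $R$ is a finitely generated $\kk$-domain, and then attack that case — which carries the mathematical content — by Newton--Okounkov semigroup methods. First, by \autoref{rem_finite_gen} I may shrink $R$ to the reduced finitely generated $\kk$-subalgebra it provides, so assume $R$ is reduced and finitely generated over $\kk$, with minimal primes $\pp_1,\dots,\pp_\ell$. By \autoref{lem_decompose} one has $\dim_\kk([A]_n)=\sum_{i=1}^\ell \dim_\kk([B^i]_n)$ for $n>0$ and $d=\max_i\dim(B^i)$, where each $B^i$ is a graded $\kk$-algebra of almost integral type inside the polynomial ring over the domain $R/\pp_i$. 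Granting the domain case (which I would isolate as a separate statement, \autoref{thm_limit_graded_domain}), pick for each $i$ with $\dim(B^i)>0$ a modulus $m_i$ as produced there and set $m=\mathrm{lcm}_i\, m_i$. For $i$ with $\dim(B^i)=d$ the limit $\lim_n \dim_\kk([B^i]_{nm})/n^{d-1}$ exists and is a positive real (and such an $i$ exists because $d>0$); for $0<\dim(B^i)<d$ the same input gives $\dim_\kk([B^i]_{nm})=O(n^{\dim(B^i)-1})=o(n^{d-1})$; and if $\dim(B^i)=0$ then $[B^i]_n=0$ for all $n>0$, since a nonzero homogeneous element of positive degree in a graded subalgebra of a domain over $\kk$ is transcendental over $\kk$ (use $\delta_\kk=\dim$ from \autoref{thm_dim_subfinite_alg}). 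Summing, $\lim_n \dim_\kk([A]_{nm})/n^{d-1}$ exists and is positive.

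\emph{The domain case.} Now let $R$ be a finitely generated $\kk$-domain with fraction field $K$. The crucial input is a valuation $\nu\colon K^\times\to\ZZ^r$, trivial on $\kk$, whose leaves have uniformly bounded dimension, say by $\ell$; this is \autoref{prop_bounded_val}, and producing it when $\kk$ is not algebraically closed (the residue field of a valuation need not be $\kk$) is the part I expect to be most delicate. Extending $\nu$ to $R[t]$ so as to also record the $t$-degree, form the valued semigroup
\[
\Gamma_A=\bigl\{(\nu(f),n)\;:\;0\neq ft^n\in[A]_n\bigr\}\;\subset\;\ZZ^r\times\NN,
\]
a non-negative semigroup with $[\Gamma_A]_0=\{0\}$; restricting $\nu$ to $[A]_n/t^n\subset R$ gives the sandwich $\#[\Gamma_A]_n\le\dim_\kk([A]_n)\le\ell\cdot\#[\Gamma_A]_n$. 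Using that $A$ lies inside an algebra of integral type, together with the Kaveh--Khovanskii passage from non-negative to strongly non-negative semigroups, I would replace $A$ by a regraded Veronese so that the associated semigroup $S$ is strongly non-negative; then \autoref{thm_limit_KK} gives $\#[S]_{nm'}/n^q\to\Vol_q(\Delta(S))/\ind(S)\in\RR_{+}$, with $q=\dim_\RR\Delta(S)$. A further Veronese stabilizes the generic leaf dimension, so that for a suitable modulus $m$ there is an integer $\ell_A\in[1,\ell]$ with $\lim_n \dim_\kk([A]_{nm})/n^q=\ell_A\cdot\Vol_q(\Delta(S))/\ind(S)$. Promoting the elementary sandwich above to this genuine limit — i.e.\ proving that a generic leaf multiplicity $\ell_A$ exists — is the other main obstacle, handled with the approximation machinery (\autoref{thm_approx}, and ideas of Cutkosky).

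\emph{Identifying the degree.} It remains to see $q=d-1$. Since $\Con(\Gamma_A)$ has dimension $q+1$, there are $q+1$ elements $a_i\in[A]_{n_i}\setminus 0$ whose values in $\ZZ^{r}\times\NN$ are $\QQ$-linearly independent; any polynomial relation among the $a_i$ over $\kk$ would possess a unique term of minimal value, which cannot cancel, so the $a_i$ are algebraically independent over $\kk$ and hence $q+1\le\delta_\kk(A)=\dim(A)=d$ by \autoref{thm_dim_subfinite_alg}. Conversely, \autoref{lem_basic_prop}(ii) gives $h$ and $\alpha>0$ with $\dim_\kk([A]_{nh})>\alpha n^{d-1}$ for all $n>0$, while $\dim_\kk([A]_n)\le\ell\cdot\#[\Gamma_A]_n=O(n^q)$ (the slices are bounded since $S$ is strongly non-negative), so $d-1\le q$. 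Hence $q=d-1$, and the limit equals $\ell_A\cdot\Vol_{d-1}(\Delta(S))/\ind(S)$, a positive real because $\Delta(S)$ is $(d-1)$-dimensional, $\ind(S)<\infty$, and $\ell_A\ge 1$.
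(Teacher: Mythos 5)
Your proposal follows essentially the same route as the paper: reduce to $R$ finitely generated via \autoref{rem_finite_gen}, split along minimal primes via \autoref{lem_decompose} into algebras $B^i$ inside $R/\pp_i[t]$, invoke the domain case (\autoref{thm_limit_graded_domain}) for each, and take $m=\mathrm{lcm}$ of the moduli; the domain case itself is attacked exactly as in the paper by constructing a $\ZZ^r$-valued valuation with bounded leaves (\autoref{prop_bounded_val}), passing to the valued semigroup, and applying \autoref{thm_limit_KK} and \autoref{thm_approx}. You are somewhat more explicit than the paper in two spots — handling the summands with $\dim(B^i)<d$ (including the degenerate $\dim(B^i)=0$ case) and identifying $q=\dim_\RR\Delta(S)$ with $d-1$ by the two-sided argument (algebraic independence of elements realizing a full-rank set of values for $q+1\le d$, and \autoref{lem_basic_prop}(ii) against the $O(n^q)$ lattice-point bound for $d-1\le q$) — whereas the paper obtains $q=d-1$ via the Veronese approximation and Hilbert-polynomial degree of $\widetilde{A}_{pm}$, but the overall strategy and all the key inputs coincide.
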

	\begin{proof}
		By \autoref{rem_finite_gen}, we assume that $R$ is finitely generated over $\kk$.
		Suppose that $\pp_1,\ldots,\pp_\ell$ are the minimal primes of $R$, and consider the algebras $B^i$ constructed in \autoref{eq_alg_B^i}.
		Since each $B^i \subset R/\pp_i[t]$ is an algebra of almost integral type, \autoref{thm_limit_graded_domain} yields the existence of the following limits
		$$
		\lim_{n\to \infty} \frac{\dim_{\kk}\left(\left[B^i\right]_{nm_i}\right)}{n^{d_i-1}}  \;\in\; \RR_{+}
		$$
		where $m_i = m(B^i)$ and $d_i = \dim(B^i)$.
		Therefore, after taking $m = \text{lcm}(m_1,\ldots,m_\ell)$, the result of the theorem follows from \autoref{lem_decompose}.
	\end{proof}

	\subsection{The case where $R$ is a domain}
	\label{subsect_R_domain}
	The setup below is used throughout this subsection.
	
	\begin{setup}
		\label{setup_domain_grad}
		Let $\kk$ be a field, $R$ be a finitely generated $\kk$-domain and $r = \trdeg_\kk(\Quot(R))$.
		Let $A \subset R[t]$ be a graded $\kk$-algebra of almost integral type.
		Let $d = \dim(A)$.
		Let $m(A)$ be the index $m(A) := [\ZZ:G]$ of the subgroup $G$  of $\ZZ$ generated by $\{n \in \NN \mid [A]_n \neq 0\}$.
		For any $p > 0$, we denote by $\widehat{A}_p$ the graded $\kk$-subalgebra $\widehat{A}_p := \kk\big[[A]_p\big] \subset A$ generated by the graded part $[A]_p$.
		By regrading the algebra $\widehat{A}_p$, we obtain the standard graded $\kk$-algebra $\widetilde{A}_p$ defined by ${[\widetilde{A}_p]}_{n} := {[\widehat{A}_p]}_{np}$ for all $n \ge 0$.
		As customary, let 
		$$
		e(\widetilde{A}_p) \;:=\;  (\dim(\widetilde{A}_p)-1)! \;\cdot\, \lim_{n\to \infty} \frac{\dim_{\kk}\big({[\widetilde{A}_p]}_{n}\big)}{n^{\dim(\widetilde{A}_p)-1}} 
		$$ be the multiplicity of the standard graded $\kk$-algebra $\widetilde{A}_p$.
	\end{setup}

	\begin{remark}
		\label{rem_order_ZZ_r}
		We fix an order on $\ZZ^r$ which is compatible with an addition, i.e.,~for any $\bn,\bm,\mathbf{p}\in \ZZ^r$ with $\bn<\bm$, we have $\bn+\mathbf{p}<\bm+\mathbf{p}$. 
		One way to do this is to fix a linear function $l$ on $\ZZ^r$ with rationally independent coefficients and set $\bn<\bm \iff l(\bn)<l(\bm)$ for $\bn,\bm\in \ZZ^r$.
	\end{remark}

	We assume that $\Quot(R)$ admits a valuation $\nu: \Quot(R) \to \ZZ^r$ such that $\nu (\alpha) = 0$ for all $\alpha \in \kk \subset \Quot(R)$.
	We further suppose that $\nu: \Quot(R) \to \ZZ^r$ is \emph{faithful} and has \emph{leaves of bounded dimension}.
	By \autoref{prop_bounded_val} we can always construct such a valuation.
	The faithfulness of $\nu$ means that $\nu(\Quot(R)) = \ZZ^r$.
	For any $\bn = (n_1,\ldots,n_r) \in \ZZ^r$, we define 
	$
	K_\bn :=\{f\in \Quot(R)\mid \nu(f)\gs \bn\}
	$ 
	and 
	$K_\bn^+ :=\{f\in \Quot(R)\mid \nu(f)> \bn\}
	$,
	and we say that the \emph{leaf with value $\bn$} is given by 
	$$
	L_\bn \; := \; K_\bn / K_\bn^+.
	$$
	We say that $\nu$ has leaves with bounded dimension if $\sup_{\bn \in \ZZ^r} \left(\dim_{\kk}\left(L_\bn\right)\right) < \infty$.
	Let $\ell$ be the maximal dimension of the leaves of $\nu$:
	$$
	\ell \;:=\; \max_{\bn\in \ZZ^r}\big(\dim_{\kk}\left( L_\bn \right)\big).
	$$
	For every $t \ge 1$, we define 
	\begin{equation}
		\label{eq_semigroups}
		\Gamma^{(t)}_A \, := \; \Big\{  (\bn,n) = (n_1,\ldots,n_r,n) \in \ZZ^{r} \times \NN \;\mid\; \dim_{\kk}\left( K_{\bn} \cap [A]_n/ K_{\bn}^+ \cap [A]_n\right) \ge t \Big\}.
	\end{equation}
	For any $n \ge 0$ and $\bn \in \ZZ^r$ one has the inequalities
	\begin{equation*}
		\dim_\kk\left( K_\bn \cap [A]_n/ K_\bn^+ \cap [A]_n\right) \; \ls \; \dim_\kk\left( L_\bn \right) \; \ls \; \ell.
	\end{equation*}
	We consider the integer
	\begin{equation}
		\label{eq_ell_A}
		\ell_A \; := \;  \max\big\lbrace t \in \NN \mid \Gamma^{(t)}_A \not\subseteq \{0\} \big\rbrace \; \le \; \ell.
	\end{equation}
	Then, we have the following equalities 
	\begin{align}
		\label{eq_equal_semigroup}
		\begin{split}
			\dim_{\kk}\left([A]_n\right) &= \dim_{\kk}\left(\bigoplus_{\bn \in \ZZ^r} \left( K_\bn \cap [A]_n/ K_\bn^+ \cap [A]_n\right)\right) \\
			&= \sum_{t=1}^{\ell_A} \#\left[\Gamma^{(t)}_A\right]_n
		\end{split}
	\end{align}
	for all $n \ge 0$.

	We have the following general lemma.
	
	\begin{lemma}
		\label{lem_polytope_equality}
		Let  $S\subset \ZZ^{r} \times \NN$ be a strongly non-negative semigroup and let $\Gamma\subset S$ be a non-trivial semigroup ideal (i.e.,~$a + S \subset \Gamma$ for all $a \in S$), then 
		\begin{enumerate}[(i)]
			\item $\Delta(\Gamma) = \Delta(S)$.
			\item $\ind(\Gamma) = \ind(S)$.
			\item $m(\Gamma) = m(S)$
		\end{enumerate}
	\end{lemma}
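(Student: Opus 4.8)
The plan is to reduce everything to a single fact: a non-trivial semigroup ideal $\Gamma$ generates the same group and the same closed convex cone as the ambient strongly non-negative semigroup $S$. Once that is established, parts (i)--(iii) are formal consequences of the definitions of $m(-)$, $\ind(-)$ and $\Delta(-)$ recalled before the statement.

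First I would observe that $\Gamma$ is non-negative, since $\Gamma \subseteq S \subseteq M$. The crucial step is the group equality $G(\Gamma) = G(S)$. The inclusion $\subseteq$ is trivial from $\Gamma \subseteq S$; for $\supseteq$, fix any $a \in \Gamma$ (which exists as $\Gamma$ is non-trivial). For every $s \in S$ the semigroup-ideal property gives $a + s \in \Gamma$, whence $s = (a + s) - a \in G(\Gamma)$. Thus $S \subseteq G(\Gamma)$, so $G(S) \subseteq G(\Gamma)$. Taking real spans, $L(\Gamma) = L(S)$, and therefore the half-spaces $M(\Gamma) = M(S)$, their boundaries $\partial M$, and the lattices $\partial M_\ZZ$ all coincide. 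Parts (ii) and (iii) drop out at once: $m(\Gamma) = [\ZZ : \pi(G(\Gamma))] = [\ZZ : \pi(G(S))] = m(S)$, and $\ind(\Gamma) = [\partial M_\ZZ : G(\Gamma) \cap \partial M] = [\partial M_\ZZ : G(S) \cap \partial M] = \ind(S)$.

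For part (i) it remains, in view of $m(\Gamma) = m(S)$, to prove $\Con(\Gamma) = \Con(S)$; then $\Delta(\Gamma) = \Con(\Gamma) \cap \pi^{-1}(m(\Gamma)) = \Con(S) \cap \pi^{-1}(m(S)) = \Delta(S)$. The inclusion $\Con(\Gamma) \subseteq \Con(S)$ is clear. For the reverse, I would again fix $a \in \Gamma$ and note that $a + ns \in \Gamma$ for all $s \in S$ and all $n \ge 1$, so that $\tfrac{1}{n} a + s = \tfrac{1}{n}(a + ns) \in \Con(\Gamma)$; letting $n \to \infty$ and using that $\Con(\Gamma)$ is closed gives $s \in \Con(\Gamma)$. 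Hence $S \subseteq \Con(\Gamma)$, and since $\Con(\Gamma)$ is a closed convex cone, $\Con(S) \subseteq \Con(\Gamma)$. In passing this shows $\Con(\Gamma) = \Con(S)$ is strictly convex and meets $\partial M$ only at the origin, i.e., $\Gamma$ is strongly non-negative, so all of these invariants are legitimately defined for $\Gamma$.

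The proof is short, and the only step with any real content is the cone equality: the inclusion that does not come for free is $\Con(S) \subseteq \Con(\Gamma)$, and it requires the ideal hypothesis — not merely $\Gamma \subseteq S$ — together with the rescaling-and-limit trick that leans on the closedness built into the definition of $\Con(-)$. The group equality $G(\Gamma) = G(S)$ is the other place where being an ideal, rather than just a sub-semigroup, is what makes the argument work.
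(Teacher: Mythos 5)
Your proof is correct and follows essentially the same strategy as the paper: use the ideal property to deduce $G(\Gamma)=G(S)$ (giving (ii) and (iii)), and use the ideal property together with closedness of $\Con(-)$ to deduce $\Con(\Gamma)=\Con(S)$ (giving (i)). Your cone argument, which shows $S\subset\Con(\Gamma)$ directly via the rescaling-and-limit trick, is a slightly more explicit rendering of the paper's terse step "$\Con(S)+a\subset\Con(\Gamma)\subset\Con(S)$ guarantees $\Con(S)=\Con(\Gamma)$"; it is a minor variation, not a different route.
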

	\begin{proof}
		Let $a\in \Gamma$ be any point, then we have an inclusion $a+S\subset \Gamma$, and hence $G(S)=G(\Gamma)$ and, in particular, $\ind(\Gamma) = \ind(S)$ and $m(\Gamma) = m(S)$.
		Moreover, we obtain inclusions of closed convex cones
		$$
		\Con(S) + a \subset \Con(\Gamma) \subset \Con(S),
		$$
		which guaranties the equalities $\Con(S)= \Con(\Gamma)$ and $\Delta(\Gamma) = \Delta(S)$.
	\end{proof}

	In the next proposition, we study some properties of $\Gamma^{(t)}_A$.
	For the special case $t = 1$, we simply write $\Gamma_A := \Gamma^{(1)}_A$.
	
	\begin{proposition}
		\label{prop_properties_semig}
		Suppose that $1 \le t \le \ell_A$ (that is,  $\Gamma^{(t)}_A \not\subseteq \{0\}$).
		Then, the following statements hold: 
		\begin{enumerate}[(i)]
			\item $\Gamma^{(t)}_A \subseteq \Gamma_A$.
			\item $\Gamma_A$ is a semigroup, and $\Gamma^{(t)}_A$ is a semigroup ideal of $\Gamma_A$.
			In particular, one has that \[\Delta(\Gamma^{(t)}_A) = \Delta(\Gamma_A),\quad  \ind(\Gamma^{(t)}_A) = \ind(\Gamma_A)\quad \text{and}\quad m(\Gamma^{(t)}_A) = m(A).
			\]
			\item $\Gamma^{(t)}_A$ is strongly non-negative.
		\end{enumerate}
	\end{proposition}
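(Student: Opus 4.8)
The plan is to dispatch (i) straight from the definitions, prove the ``semigroup'' and ``semigroup ideal'' assertions of (ii) from multiplicativity of $\nu$, then establish (iii), and finally feed $\Gamma_A$ (the case $t=1$) and the ideal $\Gamma^{(t)}_A$ into \autoref{lem_polytope_equality} to obtain the ``in particular'' clause of (ii). Part (i) is immediate: if $\dim_\kk\big(K_\bn\cap[A]_n\,/\,K_\bn^+\cap[A]_n\big)\ge t\ge 1$, then in particular it is $\ge 1$, so $(\bn,n)\in\Gamma_A$.

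For (ii), recall that $R$ is a $\kk$-domain, so $R[t]$ is a domain and $\nu(ab)=\nu(a)+\nu(b)$ for nonzero homogeneous $a,b$ (extending $\nu$ to $[A]_n\subseteq Rt^n$ through the $R$-coefficient). A point $(\bn,n)\in\Gamma_A$ means precisely that there is $0\ne a\in[A]_n$ with $\nu(a)=\bn$; given also $(\bm,m)\in\Gamma_A$ with witness $b$, the element $0\ne ab\in[A]_{n+m}$ has $\nu(ab)=\bn+\bm$, so $\Gamma_A$ is a semigroup. For the ideal property, fix $(\bn,n)\in\Gamma^{(t)}_A$ and $(\bm,m)\in\Gamma_A$ with witness $b\in K_\bm\cap[A]_m$, $\nu(b)=\bm$. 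For every $a\in K_\bn\cap[A]_n$ one has $\nu(a)\ge\bn$, hence $\nu(ab)=\nu(a)+\bm\ge\bn+\bm$, with equality if and only if $\nu(a)=\bn$; thus multiplication by $b$ carries $K_\bn\cap[A]_n$ into $K_{\bn+\bm}\cap[A]_{n+m}$ and $K_\bn^+\cap[A]_n$ into $K_{\bn+\bm}^+\cap[A]_{n+m}$, and the induced $\kk$-linear map
$$
\frac{K_\bn\cap[A]_n}{K_\bn^+\cap[A]_n}\;\longrightarrow\;\frac{K_{\bn+\bm}\cap[A]_{n+m}}{K_{\bn+\bm}^+\cap[A]_{n+m}}
$$
is injective, since its kernel is made of classes of $a$ with $\nu(ab)>\bn+\bm$, i.e.\ $\nu(a)>\bn$, i.e.\ $a\in K_\bn^+$. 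Hence the target has $\kk$-dimension $\ge t$, so $(\bn+\bm,n+m)\in\Gamma^{(t)}_A$; this is the ideal property, and it forces $\Gamma^{(t)}_A$ to be a sub-semigroup since $\Gamma^{(t)}_A+\Gamma^{(t)}_A\subseteq\Gamma^{(t)}_A+\Gamma_A\subseteq\Gamma^{(t)}_A$.

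For (iii), first note $\Gamma^{(t)}_A\subseteq\ZZ^r\times\NN$, so it is non-negative. For strong non-negativity it suffices to produce a constant $C>0$ with $\Gamma^{(t)}_A\subseteq\Gamma_A\subseteq\Theta:=\{(\bn,n)\in\RR^r\times\RR_{\ge 0}\mid\|\bn\|_\infty\le Cn\}$: the set $\Theta$ is a closed convex cone containing no line, with $\Theta\cap(\RR^r\times\{0\})=\{0\}$, so $\Con(\Gamma^{(t)}_A)\subseteq\Theta$ is strictly convex and meets $\partial M$ only at the origin. To get $C$, a point of $\Gamma_A$ is $(\nu(f),n)$ for some $0\ne ft^n\in[A]_n$; since $A$ is of almost integral type, $A\subseteq B\subseteq R[t]$ with $B$ of integral type, so $B$ is finitely generated over $\kk$, say by $g_1t^{d_1},\dots,g_kt^{d_k}$ with $g_i\in R$ and $d_i\ge 1$. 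Then $[A]_n\subseteq[B]_n$ is spanned over $\kk$ by monomials $\big(\prod_i g_i^{\beta_i}\big)t^n$ with $\sum_i\beta_id_i=n$, hence $\sum_i\beta_i\le n/\min_i d_i$; the order-compatibility inequality $\nu\big(\sum c_\alpha M_\alpha\big)\ge\min_\alpha\nu(M_\alpha)$ then bounds $l(\nu(f))$ from below by a linear function of $n$. The remaining (two-sided, componentwise) bound is exactly the assertion that the valuation $\nu$ supplied by \autoref{prop_bounded_val} is \emph{linearly bounded} on the finitely generated ring $R$, giving $\|\nu(f)\|_\infty\le C\,n$ for all $ft^n\in[B]_n$; with this, $\Gamma_A\subseteq\Theta$.

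Finally, $\Gamma_A$ is strongly non-negative by (iii) with $t=1$, and $\Gamma^{(t)}_A$ is a non-trivial semigroup ideal of $\Gamma_A$ by (ii) (non-trivial because $t\le\ell_A$), so \autoref{lem_polytope_equality} applied to $S=\Gamma_A$, $\Gamma=\Gamma^{(t)}_A$ gives $\Delta(\Gamma^{(t)}_A)=\Delta(\Gamma_A)$, $\ind(\Gamma^{(t)}_A)=\ind(\Gamma_A)$ and $m(\Gamma^{(t)}_A)=m(\Gamma_A)$; and $m(\Gamma_A)=m(A)$ by definition, since $\pi(G(\Gamma_A))$ is precisely the subgroup of $\ZZ$ generated by $\{n\mid[A]_n\ne 0\}$. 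The main obstacle is the linear-boundedness step inside (iii): the valuation axioms alone give only the one-sided estimate recorded above, so one genuinely needs the structure of $\nu$ from \autoref{prop_bounded_val} --- equivalently, the fact (already used in \cite[Part~II]{KAVEH_KHOVANSKII} and in \cite{lazarsfeld09}) that an Abhyankar valuation, which is what ``bounded leaves'' together with $r=\trdeg_\kk(\Quot(R))$ amounts to here, is bounded by a constant times degree on any finitely generated subalgebra.
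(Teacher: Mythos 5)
Parts (i) and (ii) of your argument are correct and essentially match the paper. For (ii), the paper argues by contradiction (pick a $\kk$-basis $\overline{f_1},\dots,\overline{f_t}$ of $K_\bn\cap[A]_n/K_\bn^+\cap[A]_n$, multiply by a witness $g$ with $\nu(g)=\bm$, and derive a contradiction from a hypothetical linear dependence), whereas you package the same computation as injectivity of the multiplication-by-$b$ map between leaves; these are the same argument, and your derivation of the ``in particular'' clause from \autoref{lem_polytope_equality} is exactly the paper's.

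Part (iii) is where you diverge, and there is a genuine gap. Your strategy is to exhibit a proper closed cone $\Theta=\{\|\bn\|_\infty\le Cn\}$ containing $\Gamma_A$, which would require, on top of the easy lower bound $\nu(f)\ge\mathbf{0}$ for $f\in R\subset S$, a \emph{linear upper bound} $\|\nu(f)\|_\infty\le Cn$ for all nonzero $f$ with $ft^n\in[B]_n$. You correctly identify that the monomial/span argument only gives $\nu(f)\ge\min_\alpha\nu(M_\alpha)$ --- which is useless for the upper bound because of possible cancellation among the $M_\alpha$ --- and then you punt on the upper bound, asserting it is the ``linear boundedness'' of the Abhyankar valuation $\nu$ from \autoref{prop_bounded_val} on finitely generated subalgebras, and attributing this to \cite{KAVEH_KHOVANSKII} and \cite{lazarsfeld09}. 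That attribution is inaccurate: neither reference proves or invokes a linear-boundedness lemma here. The mechanism in \cite{KAVEH_KHOVANSKII} (which the paper follows) is a dimension count: after possibly enlarging the integral-type algebra $B\supset A$ so that $G(\Gamma_B)=\ZZ^{r+1}$ \cite[Lemma 2.29]{KAVEH_KHOVANSKII}, the fact that $B$ is a finite module over a standard graded $\kk$-algebra of Krull dimension $\le r+1$ forces $\dim_\kk([B]_n)$ to be eventually a polynomial of degree $\le r$; then \cite[Theorem 1.18]{KAVEH_KHOVANSKII} says that a non-negative semigroup whose generated group is all of $\ZZ^{r+1}$ and whose level-$n$ slices have polynomially bounded cardinality of degree $\le r$ is automatically strongly non-negative. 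Strong non-negativity then descends to $\Gamma_A\subset\Gamma_B$. A direct linear-boundedness statement of the Izumi type that you invoke is indeed true and would close your gap, but it is a nontrivial result in its own right and you do not prove it or give a precise reference; as written, (iii) is not established. Replacing your $\Theta$-cone argument with the paper's ``$G(\Gamma_B)=\ZZ^{r+1}$ plus polynomial growth plus \cite[Theorem 1.18]{KAVEH_KHOVANSKII}'' chain would repair it.
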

	\begin{proof}
		(i) This part is clear.
		
		(ii) Let $(\bn,n) = (n_1,\ldots,n_r,n) \in \Gamma^{(t)}_A$ and $(\bm,m) = (m_1,\ldots,m_r,m) \in \Gamma_A$.
		Then we can choose elements $f_1,\ldots,f_t \in [A]_n$ such that $\nu(f_i)=\bn$ and the classes $\overline{f_1},\ldots,\overline{f_t}$ are linearly independent over $\kk$ in $K_\bn \cap [A]_n/ K_\bn^+ \cap [A]_n$.
		Similarly, let $g \in [A]_m$ such that $\nu(g)=\bm$.
		
		By contradiction, suppose that the classes of $gf_1,\ldots,gf_t \in [A]_{n+m}$ are $\kk$-linearly dependent in 
		$$
		K_{\bn+\bm} \cap [A]_{n+m}/ K_{\bn+\bm}^+ \cap [A]_{n+m}.
		$$
		Thus, there exist $a_1\ldots,a_t \in \kk$ such that $\nu(a_1gf_1+\cdots+a_tgf_t) > \bn + \bm$, which implies that $\nu(a_1f_1+\cdots+a_tf_t) > \bn$.
		But, the last inequality $\nu(a_1f_1+\cdots+a_tf_t) > \bn$ yields the contradiction $a_1\overline{f_1}+\cdots+a_t\overline{f_t} = 0 \in K_\bn \cap [A]_n/ K_\bn^+ \cap [A]_n$.
		So, as required we have $(\bn+\bm,n+m) \in \Gamma^{(t)}_A$.
		
		Finally, \autoref{lem_polytope_equality} implies that $\Delta(\Gamma^{(t)}_A) = \Delta(\Gamma_A)$,  $\ind(\Gamma^{(t)}_A) = \ind(\Gamma_A)$ and $m(\Gamma^{(t)}_A) = m(\Gamma_A)$.
		Also, notice that $m(\Gamma_A) = m(A)$.
		
		(iii)  Since $\Gamma^{(t)}_A \subset \Gamma_A$, it is enough to show that 
		$\Gamma_A$
		is strongly non-negative.
		By definition, there is a graded $\kk$-algebra $B \subset R[t]$ of integral type such that $A \subset B$.
		Consider the semigroup 
		$$
		\Gamma_B := \big\{  (\bn, n) \in \ZZ^{r} \times \NN \;\mid\;  K_{\bn} \cap [B]_n/ K_{\bn}^+ \cap [B]_n \neq 0 \big\}
		$$
		determined by $B$. 
		After possibly extending $B$ by a bigger algebra of integral type, we can assume that $G(\Gamma_B) = \ZZ^{r+1}$ (see \cite[Lemma 2.29]{KAVEH_KHOVANSKII}).
		Since $B$ is a finitely generated module over a standard graded $\kk$-algebra and $\dim(B) \le r+1$, it follows that $\dim_{\kk}([B]_n)$ becomes a polynomial of degree bounded by $r$ for $n \gg0$.
		So, \cite[Theorem 1.18]{KAVEH_KHOVANSKII} implies that $\Gamma_B$ is strongly non-negative, and since $\Gamma_A \subset \Gamma_B$, the result follows.
	\end{proof}
	
	The following theorem deals with the asymptotic behavior of the growth of $A$.
	The semigroup $\Gamma_A$ is given by
	$$
	\Gamma_A \;=\; \Gamma^{(1)}_A \; = \; \big\lbrace (\nu(a), n) \in \ZZ^{r} \times \NN \;\mid\;  0 \neq a \in [A]_n \big\rbrace
	$$ 
	and we call it the \emph{valued semigroup} of the graded $\kk$-algebra $A$.
	To simplify notation, we denote $\Delta(\Gamma_A)$ and $\ind(\Gamma_A)$ by $\Delta(A)$ and $\ind(A)$, respectively.
	
	\begin{theorem}
		\label{thm_limit_graded_domain}
		Assume \autoref{setup_domain_grad}, set $m = m(A)$ and let $d = \dim(A)$ and $\ell_A$ be as in \autoref{eq_ell_A}.
		Then
		$$
		\lim_{n\to \infty} \frac{\dim_{\kk}\left([A]_{nm}\right)}{n^{d-1}}  \; = \; \ell_A \cdot \frac{\Vol_{d-1}(\Delta(A))}{\ind(A)},
		$$
		and we have the equality 
		$$
		(d-1)! \, \cdot \, \lim_{n\to \infty} \frac{\dim_{\kk}\left([A]_{nm}\right)}{n^{d-1}} \;\;= \;\; \lim_{p \to \infty}\, \frac{e(\widetilde{A}_{pm})}{p^{d-1}}.
		$$
	\end{theorem}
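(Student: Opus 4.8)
The strategy is to reduce everything to the Kaveh--Khovanskii machinery for strongly non-negative semigroups (\autoref{thm_limit_KK} and \autoref{thm_approx}) applied to the valued semigroup $\Gamma_A$ and its semigroup ideals $\Gamma^{(t)}_A$. First I would invoke \autoref{prop_properties_semig}: each $\Gamma^{(t)}_A$ is strongly non-negative, is a semigroup ideal of $\Gamma_A$, and satisfies $\Delta(\Gamma^{(t)}_A)=\Delta(\Gamma_A)=\Delta(A)$, $\ind(\Gamma^{(t)}_A)=\ind(A)$, and $m(\Gamma^{(t)}_A)=m(A)=m$. Combining the decomposition \autoref{eq_equal_semigroup}, namely $\dim_{\kk}([A]_n)=\sum_{t=1}^{\ell_A}\#[\Gamma^{(t)}_A]_n$, with \autoref{thm_limit_KK} applied to each $\Gamma^{(t)}_A$ gives
$$
\lim_{n\to\infty}\frac{\dim_{\kk}([A]_{nm})}{n^{q}}
=\sum_{t=1}^{\ell_A}\lim_{n\to\infty}\frac{\#[\Gamma^{(t)}_A]_{nm}}{n^{q}}
=\ell_A\cdot\frac{\Vol_{q}(\Delta(A))}{\ind(A)},
$$
where $q=\dim_\RR(\Delta(A))$. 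The one point that needs care here is identifying the exponent: I must show $\dim_\RR(\Delta(A))=d-1$. This should follow because $\dim_\kk([A]_n)$ grows like $n^{d-1}$ (the lower bound coming from \autoref{lem_basic_prop}(ii), the upper bound because $A$ embeds in an algebra of integral type whose Hilbert function is eventually polynomial of degree $\le r$, hence $\le d-1$ after relating $d$ to the relevant transcendence degree via \autoref{thm_dim_subfinite_alg}); matching this with the semigroup computation forces $q=d-1$.

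For the second equality, the idea is to compare $A$ with the subalgebras $\widehat{A}_{pm}$ generated in a single degree, exactly paralleling the Fujita-type approximation in \cite[Theorem 2.35]{KAVEH_KHOVANSKII}. The semigroup of $\widehat{A}_{pm}$ is $\widehat{(\Gamma_A)}_{pm}$ in the notation of \autoref{thm_approx}, and the Veronese rescaling that turns $\widehat{A}_{pm}$ into the standard graded algebra $\widetilde{A}_{pm}$ sends $[\widetilde{A}_{pm}]_n$ to $[\widehat{A}_{pm}]_{npm}=npm\star[A]_{pm}$ on the algebra side, while on the semigroup side $\#[\widetilde{A}_{pm}]_n$ is controlled, through the leaf decomposition, by $\#(n\star[\Gamma_A]_{pm})$ up to the multiplicative constant $\ell_A$ (the maximal leaf dimension stabilizes, since leaves of $\widehat{A}_{pm}$ are sub-quotients of leaves of $A$, and for $p\gg0$ the top leaf dimension is attained). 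Thus
$$
\lim_{n\to\infty}\frac{\dim_{\kk}([\widetilde{A}_{pm}]_n)}{n^{d-1}}
=\ell_A\cdot\lim_{n\to\infty}\frac{\#\big(n\star[\Gamma_A]_{pm}\big)}{n^{d-1}},
$$
and \autoref{thm_approx}(iii) says the right-hand side, divided by $p^{d-1}$, converges to $\lim_{n}\#[\Gamma_A]_{nm}/n^{d-1}=\lim_n\dim_\kk([A]_{nm})/(\ell_A n^{d-1})$ as $p\to\infty$. Multiplying through by $(d-1)!$ and by $\ell_A$, and recalling that $e(\widetilde{A}_{pm})=(d-1)!\lim_n \dim_\kk([\widetilde{A}_{pm}]_n)/n^{d-1}$ once $p$ is large enough that $\dim(\widetilde{A}_{pm})=d$ (which holds by \autoref{thm_approx}(i) together with the dimension--Newton-Okounkov-body translation), yields the claimed identity $(d-1)!\lim_n\dim_\kk([A]_{nm})/n^{d-1}=\lim_{p\to\infty}e(\widetilde{A}_{pm})/p^{d-1}$.

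The main obstacle, and the place where the bookkeeping is delicate rather than merely routine, is the passage between the \emph{algebra-level} objects ($\dim_\kk$ of graded pieces, multiplicities of $\widetilde{A}_{pm}$) and the \emph{semigroup-level} objects ($\#[\Gamma^{(t)}_A]_n$, $n\star[\Gamma_A]_{pm}$): one must verify that the leaf-filtration used in \autoref{eq_equal_semigroup} is compatible with multiplication of elements in single degree, so that the semigroup generated by $[\Gamma_A]_{pm}$ genuinely records $\dim_\kk$ of the graded pieces of $\widehat{A}_{pm}$ up to the constant factor $\ell_A$, and that the top leaf dimension $\ell_A$ is eventually the same for $A$ and for $\widehat{A}_{pm}$. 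Once this compatibility is pinned down, both equalities fall out of \autoref{thm_limit_KK} and \autoref{thm_approx} by taking $\varepsilon\to 0$.
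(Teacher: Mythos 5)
Your overall strategy is the paper's: decompose $\dim_\kk([A]_{nm})$ through the leaf filtration as $\sum_{t=1}^{\ell_A}\#\big[\Gamma^{(t)}_A\big]_{nm}$, apply \autoref{thm_limit_KK} to each $\Gamma^{(t)}_A$ via \autoref{prop_properties_semig}, and get the second equality by comparing with the semigroups of $\widehat{A}_{pm}$ through \autoref{thm_approx}. Two steps of your sketch, however, do not go through as written.

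First, the identification $\dim_\RR(\Delta(A))=d-1$. The lower bound $\geq d-1$ from \autoref{lem_basic_prop}(ii) is fine, but your upper bound, embedding $A$ into an algebra $B$ of integral type and using that the Hilbert function of $B$ is eventually polynomial, only bounds the growth by $n^{\dim(B)-1}$, and $\dim(B)$ may strictly exceed $d=\dim(A)$: \autoref{thm_dim_subfinite_alg} gives $\dim(B)\geq\dim(A)$, not equality, so "relating $d$ to the relevant transcendence degree" does not close this. The paper gets the exact exponent from the finitely generated approximants instead: for $p\gg0$ one has $\dim(\widehat{A}_{pm})=d$ by \autoref{thm_dim_subfinite_alg}, hence the Hilbert function of $\widetilde{A}_{pm}$ is a polynomial of degree exactly $d-1$; combined with the leaf decomposition of $\dim_\kk\big(\big[\widehat{A}_{pm}\big]_{npm}\big)$ and \autoref{thm_approx}(i), which gives $\dim_\RR\Delta\big(\Gamma_{\widehat{A}_{pm}}\big)=\dim_\RR\Delta(\Gamma_A)$ for $p\gg0$, \autoref{thm_limit_KK} then forces $\dim_\RR(\Delta(A))=d-1$.

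Second, your displayed identity $\lim_n\dim_\kk\big(\big[\widetilde{A}_{pm}\big]_n\big)/n^{d-1}=\ell_A\cdot\lim_n\#\big(n\star[\Gamma_A]_{pm}\big)/n^{d-1}$ for a \emph{fixed} $p$ is unjustified and false in general: for fixed $p$ the relevant decomposition is $\dim_\kk\big(\big[\widehat{A}_{pm}\big]_{npm}\big)=\sum_{t}\#\big[\Gamma^{(t)}_{\widehat{A}_{pm}}\big]_{npm}$, the individual terms are only sandwiched as $n\star\big[\Gamma^{(t)}_A\big]_{pm}\subset\big[\Gamma^{(t)}_{\widehat{A}_{pm}}\big]_{npm}\subset\big[\Gamma^{(t)}_A\big]_{npm}$, their asymptotics need not coincide with that of $\#\big(n\star[\Gamma_A]_{pm}\big)$, and $\ell_{\widehat{A}_{pm}}$ need not equal $\ell_A$. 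The correct move (the paper's) is to apply \autoref{thm_approx}(iii) to each $t$ separately and only then let $p\to\infty$: each normalized term converges to $\lim_n\#\big[\Gamma^{(t)}_A\big]_{nm}/n^{d-1}=\Vol_{d-1}(\Delta(A))/\ind(A)$, and summing over $t=1,\dots,\ell_A$ recovers the factor $\ell_A$. With these two repairs your plan coincides with the paper's proof.
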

	\begin{proof}
		For any $p > 0$ and  $t>0$ we consider the semigroup
		\begin{equation*}
			\label{eq_approx_semigroup}
			\Gamma_{\widehat{A}_{pm}}^{(t)} = \Big\{  (\bn, n) \in \ZZ^{r} \times \NN \;\mid\;  \left(K_{\bn} \cap \big[\widehat{A}_{pm}\big]_n/ K_{\bn}^+ \cap \big[\widehat{A}_{pm}\big]_n\right) \ge t \Big\};
		\end{equation*}
		these semigroups play the same role for $\widehat{A}_{pm}$ than the semigroups in \autoref{eq_semigroups} for $A$.
		
		By using the inclusions 
		\begin{equation}
			\label{eq_inclusion_semig}
			n \star \left[\Gamma^{(t)}_A\right]_{pm} \; \subset \; \left[\Gamma_{\widehat{A}_{pm}}^{(t)}\right]_{npm} \; \subset \; \left[\Gamma^{(t)}_A\right]_{npm}
		\end{equation}
		and \autoref{thm_approx}, we can choose $p \gg 0$ such that 
		$$
		\dim_\RR\big(\Delta\big(\Gamma_{\widehat{A}_{pm}}^{(t)}\big)\big) = \dim_\RR\big(\Delta\big(\Gamma^{(t)}_A\big)\big).
		$$
		From \autoref{thm_dim_subfinite_alg}, by possibly making $p$ larger, we can assume that $\dim(\widehat{A}_{pm}) = d$.	
		After regrading $\widehat{A}_{pm}$ and considering the standard graded $\kk$-algebra $\widetilde{A}_{pm}$, we obtain the existence of a polynomial $Q_{pm}(n)$ of degree $d-1$ such that $\dim_{\kk}([\widehat{A}_{pm}]_{npm}) = Q_{pm}(n)$ for $n \gg 0$ (see, e.g.,~\cite[Theorem 4.1.3]{BRUNS_HERZOG}).
		Similarly to \autoref{eq_equal_semigroup}, we have the equality 
		\begin{equation}
			\label{eq_sum_A_p}
			\dim_{\kk}\left(\big[\widehat{A}_{pm}\big]_{npm}\right) = \sum_{t=1}^{\ell_{\widehat{A}_{pm}}} \#\left[\Gamma_{\widehat{A}_{pm}}^{(t)}\right]_{npm}.
		\end{equation}
		From the inclusion $\Gamma_{\widehat{A}_{pm}}^{(t)} \subset \Gamma_{\widehat{A}_{pm}}$, it then necessarily follows that the growth of $\#\big[\Gamma_{\widehat{A}_{pm}}\big]_{npm}$ is asymptotically a polynomial of degree $d-1$.
		So, by applying \autoref{thm_limit_KK} to the semigroup $\Gamma_{\widehat{A}_{pm}}$, we obtain 
		$$
		\dim_\RR\big(\Delta\big(A\big)\big) = \dim_\RR\big(\Delta\big(\Gamma_{\widehat{A}_{pm}}\big)\big) = d-1.
		$$ 
		From \autoref{prop_properties_semig}, we also have $\Delta(\Gamma^{(t)}_A) = \Delta(A)$,  $\ind(\Gamma^{(t)}_A) = \ind(A)$ and $m(\Gamma^{(t)}_A) = m(A) = m$ for all $1 \le t \le \ell_A$.
		Therefore, by using \autoref{eq_equal_semigroup}, \autoref{prop_properties_semig} and \autoref{thm_limit_KK}, we obtain the equality
		$$
		\lim_{n\to \infty} \frac{\dim_{\kk}\left([A]_{nm}\right)}{n^{d-1}}  \; = \; \ell_A \cdot \frac{\Vol_{d-1}(\Delta(A))}{\ind(A)}.
		$$
		So, the first statement of the theorem holds.
		
		From \autoref{eq_inclusion_semig} and \autoref{thm_approx}, for any $\varepsilon > 0$, we can choose $p \gg 0$ such that 
		\begin{equation*}
			\lim_{n\to \infty} \frac{\#\big[\Gamma^{(t)}_A\big]_{nm}}{n^{d-1}} - \varepsilon \;\; \le \;\; 
			\lim_{n\to \infty} \frac{\#\Big[\Gamma_{\widehat{A}_{pm}}^{(t)}\Big]_{npm}}{n^{d-1}p^{d-1}} 
			\;\; \le \;\; \lim_{n\to \infty} \frac{\#\big[\Gamma^{(t)}_A\big]_{nm}}{n^{d-1}}
		\end{equation*}
		for all $t$.
		Finally, by \autoref{eq_sum_A_p} we obtain 
		$$
		\lim_{n\to \infty} \frac{\dim_{\kk}\left([A]_{nm}\right)}{n^{d-1}} 
		\; = \; 
		\lim_{p \to \infty} \frac{1}{p^{d-1}} \left(\lim_{n\to \infty} \frac{\dim_\kk\big(\big[\widehat{A}_{pm}\big]_{npm}\big)}{n^{d-1}} \right) \; = \; \frac{1}{(d-1)!} \lim_{p \to \infty} \frac{e(\widetilde{A}_{pm})}{p^{d-1}},
		$$
		which gives the second claimed equality.
	\end{proof}

	\subsection{Valuations with bounded leaves}
	\label{subsect_bounded}
	In this subsection, we show that the valuations used in \autoref{subsect_R_domain} can always be constructed.
	Here we continue using \autoref{setup_domain_grad}.
	We start with the following lemma that will allow us construct a valuation on $\Quot(R)$.
	
	\begin{lemma}
		\label{lem_exists_reg_ring}
		There exists a regular local ring $(S,\mm,\kk')$ such that $R \subset S$, $\Quot(S) = \Quot(R)$ and the residue field $\kk' = S/\mm$ is finite over $\kk$.
	\end{lemma}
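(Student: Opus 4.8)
The plan is to obtain $S$ as the local ring of a suitably chosen closed point of $X := \Spec(R)$. Since $R$ is a finitely generated $\kk$-domain, $X$ is an integral affine scheme of finite type over $\kk$ with function field $\Quot(R)$, and for every prime ideal $\pp \subset R$ the localization $R_\pp$ is a local subring of $\Quot(R)$ with $\Quot(R_\pp) = \Quot(R)$. Thus it suffices to produce a maximal ideal $\mm \subset R$ for which $R_\mm$ is a regular local ring: then $S := R_\mm$ and $\kk' := S/\mm S = R/\mm$ satisfy $R \subseteq S \subseteq \Quot(R)$, $\Quot(S) = \Quot(R)$, $S$ is regular local, and $\kk'$ is finite over $\kk$ by the Nullstellensatz.

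I would carry this out in three steps. First, recall that the regular locus $U := \{\pp \in \Spec(R) \mid R_\pp \text{ is regular}\}$ is open in $\Spec(R)$. Second, $U \neq \emptyset$: since $R$ is a domain, the zero ideal is prime and $R_{(0)} = \Quot(R)$ is a field, hence a regular local ring, so the generic point lies in $U$. Third, since a finitely generated algebra over the field $\kk$ is a Jacobson ring, its maximal ideals are dense in $\Spec(R)$; therefore the nonempty open set $U$ contains some maximal ideal $\mm$, and $S := R_\mm$ is the sought regular local ring. The equality $\Quot(S) = \Quot(R)$ is automatic (localizing a domain does not change its fraction field), and $\kk' = R/\mm$ is a finite extension of $\kk$ by Zariski's lemma, with $\kk \hookrightarrow \kk'$ because $\kk \cap \mm = (0)$.

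The one nontrivial ingredient — and the step I expect to be the main obstacle — is the openness of the regular locus $U$ over an arbitrary, possibly imperfect, field $\kk$. When $\kk$ is perfect this follows at once from the Jacobian criterion: writing $R = \kk[x_1,\dots,x_n]/P$, the singular locus is the closed subset cut out by $P$ together with suitable maximal minors of the Jacobian matrix of a generating set of $P$. For general $\kk$ the equality ``regular $=$ smooth over $\kk$'' fails, and one instead invokes that every finitely generated $\kk$-algebra is excellent, in particular that the regular locus of any finitely generated algebra over it — and hence of $R$ itself — is open (see, e.g., \cite{MATSUMURA, EISEN_COMM}). The remaining verifications are routine.
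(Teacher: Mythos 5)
Your proof is correct and follows essentially the same route as the paper's: both rely on the openness of the regular locus of a finitely generated $\kk$-algebra (the paper cites the Stacks project for this), non-emptiness because $R$ is a domain, and the Nullstellensatz to guarantee a closed point with residue field finite over $\kk$. The only cosmetic difference is that the paper localizes at an element $g$ with $\Spec(R_g) \subset \text{Reg}(R)$ and then picks a maximal ideal of $R_g$, whereas you invoke the Jacobson property of $R$ to find a closed point of $\Spec(R)$ directly inside the open regular locus — two ways of saying the same thing.
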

	\begin{proof}
		From \cite[\href{https://stacks.math.columbia.edu/tag/07PJ}{Proposition 07PJ}]{stacks-project} and \cite[\href{https://stacks.math.columbia.edu/tag/07P7}{Definition 07P7}]{stacks-project}, we have 
		$$
		\text{Reg}(R) = \big\{ \pp \in \Spec(R) \mid R_\pp \text{ is a regular local ring}\big\}
		$$
		is an open subset of $\Spec(R)$.
		As $R$ is a domain, $\text{Reg}(R)$ is non-empty.
		Then we can  choose $0 \neq g \in R$ such that $\Spec(R_g) \subset \text{Reg}(R)$.
		Let $\mm \subset \Spec(R_g)$ be a maximal ideal of $R_g$.
		Therefore, by setting $S = {(R_g)}_\mm$ the result follows.
	\end{proof}
	
	From \autoref{lem_exists_reg_ring}, fix a regular local ring $(S,\mm,\kk')$ such that $R \subset S$, $\Quot(S) = \Quot(R)$ and the residue field $\kk' = S/\mm$ is finite over $\kk$. 
	Let $r = \dim(S) = \trdeg_\kk\left(\Quot(R)\right)$, and choose $y_1,\ldots,y_r  \subset \mm$ a regular system of parameters for $S$.
	Let $b_1,\ldots, b_r$ be rationally independent  real numbers with $b_i\gs 1$ for every $1\ls i\ls r$.
	Since $S$ is a regular local ring and $\Quot(S) = \Quot(R)$, we can construct a valuation $\omega$ on $\Quot(R)$ with values in $\RR$ by setting 
	$$
	\omega\left(y_1^{n_1}\cdots y_r^{n_r}\right) = n_1b_1 + \cdots + n_rb_r \; \in \; \RR
	$$ 
	for every $(n_1,\ldots,n_d)\in \NN^r$, and $\omega(\gamma) = 0$ if $\gamma \in S$ has non-zero residue in $\kk' = S/\mm$. 
	Let $(V_\omega, \mm_\omega, \kk_\omega)$ be the valuation ring of $\omega$ in $\Quot(R)$.
	Notice that $V_\omega$ dominates $S$ and we have the equality $\kk_\omega = \kk'$.
	Since $b_1,\ldots,b_r$ are rationally independent, we can define a function $\varphi : \Quot(R) \rightarrow \ZZ^r$ determined by $\varphi(f) = (n_1,\ldots,n_r)$ if $f \in \Quot(R)$ and $\omega(f) = n_1b_1+\cdots+n_rb_r$.
	As in \autoref{rem_order_ZZ_r}, we now fix the order on $\ZZ^r$ that is determined by the linear function $l : \ZZ^r \rightarrow \RR, \; (n_1,\ldots,n_r) \mapsto n_1b_1 + \cdots + n_rb_r$.
	Therefore, we obtain the valuation $\nu : \Quot(R) \rightarrow \ZZ^r, \; f \mapsto \varphi(f)$ for which the proposition below is valid.
	
	\begin{proposition}
		\label{prop_bounded_val}
		There is a valuation $\nu : \Quot(R) \rightarrow \ZZ^r$ that satisfies the following conditions:
		\begin{enumerate}[(i)]
			\item  $\nu (\alpha) = 0$ for all $\alpha \in \kk \subset \Quot(R)$.
			\item $\nu$ is faithful.
			\item $\nu$ has leaves of bounded dimension.
		\end{enumerate}
	\end{proposition}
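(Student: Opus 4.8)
The valuation $\nu$ has already been produced in the paragraph preceding the statement, so the only task is to verify the three listed properties. My plan is to dispose of (i) and (ii) straight from the construction and to reserve the real argument for (iii), where a translation trick reduces every leaf to the single leaf over the origin, namely $\kk_\omega=\kk'$, which is finite over $\kk$ by \autoref{lem_exists_reg_ring}.

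For (i): every nonzero $\alpha\in\kk$ lies in $S$ and has nonzero image under $\kk\hookrightarrow S\twoheadrightarrow\kk'=S/\mm$ (a ring map from a field), so $\omega(\alpha)=0$ by the defining recipe for $\omega$, whence $\nu(\alpha)=\varphi(\alpha)=\mathbf{0}$. For (ii): by construction $\omega(y_i)=b_i$, hence $\nu(y_i)=\ee_i$ for $1\le i\le r$; since $\nu$ is a valuation its image $\nu(\Quot(R)^\times)$ is a subgroup of $\ZZ^r$ containing every $\ee_i$, so it equals $\ZZ^r$ and $\nu$ is faithful.

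For (iii) I would first compute the leaf over the origin. As the chosen order on $\ZZ^r$ is the one induced by the linear functional $l$ and $l\circ\nu=\omega$, we get
$$
K_{\mathbf{0}}=\{f\in\Quot(R)\mid\omega(f)\ge 0\}=V_\omega,\qquad K_{\mathbf{0}}^+=\{f\in\Quot(R)\mid\omega(f)>0\}=\mm_\omega,
$$
so $L_{\mathbf{0}}=K_{\mathbf{0}}/K_{\mathbf{0}}^+=V_\omega/\mm_\omega=\kk_\omega=\kk'$, a $\kk$-vector space of dimension $[\kk':\kk]<\infty$. Now fix any $\bn\in\ZZ^r$; by (ii) pick $f_0\in\Quot(R)$ with $\nu(f_0)=\bn$ (so $f_0\ne 0$). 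Since $\nu$ is additive and the order on $\ZZ^r$ is compatible with addition, $\nu(f_0g)=\bn+\nu(g)\ge\bn$ exactly when $\nu(g)\ge\mathbf{0}$, and likewise with strict inequality; hence multiplication by $f_0$ is a $\kk$-linear automorphism of $\Quot(R)$ taking $K_{\mathbf{0}}$ onto $K_\bn$ and $K_{\mathbf{0}}^+$ onto $K_\bn^+$, and therefore descends to a $\kk$-linear isomorphism $L_{\mathbf{0}}\xrightarrow{\ \sim\ }L_\bn$. Consequently $\dim_\kk(L_\bn)=[\kk':\kk]$ for every $\bn$, so $\sup_{\bn\in\ZZ^r}\dim_\kk(L_\bn)=[\kk':\kk]<\infty$, which is precisely the assertion that $\nu$ has leaves of bounded dimension.

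No serious obstacle remains once the valuation is in hand; the one essential input is the finiteness $[\kk':\kk]<\infty$ furnished by \autoref{lem_exists_reg_ring} — together with the facts, recorded in the construction, that $V_\omega$ dominates $S$ and $\kk_\omega=\kk'$ — since without it the leaf $L_{\mathbf{0}}$, and hence every leaf, would be infinite-dimensional over $\kk$.
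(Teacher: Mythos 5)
Your proof is correct and fills in exactly the verification the paper leaves implicit: the paper states the proposition immediately after constructing $\nu$ and offers no further argument, so the intended proof is precisely to check (i)--(iii) for that construction. Your handling of (iii) — identifying $L_{\mathbf{0}}$ with $\kk'=\kk_\omega$ and transporting it to every $L_{\bn}$ by multiplication by an element of value $\bn$, using faithfulness and the additivity-compatible order — is the standard argument and is sound.
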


	\section{Multigraded algebras of almost integral type}
	\label{sect_mult_grad}
	
	Here we define and study a volume function for multigraded algebras of almost integral type (see \autoref{eq_function_A}), and we relate this function to certain global Newton-Okounkov bodies.
	In \cite{lazarsfeld09}, similar volume functions have been considered for the case of multigraded linear series in an irreducible projective variety over an algebraically closed field.
	First, for the sake of completeness, we recall the notion of mixed multiplicities for the well-studied case of standard multigraded algebras, and we explain how this volume function encodes the mixed multiplicities.
	
	\begin{remark}
		\label{rem_standard_graded}
		Let $\kk$ be a field and $A$ be a standard $\NN^s$-graded $\kk$-algebra (i.e.,~it is finitely generated over $\kk$ by elements of degree $\ee_i$ for $1 \le i \le s$).
		Suppose $A$ is a domain and set $d = \dim(A)$.
		Let $P_A(\bn)=P_A(n_1,\ldots,n_s)$ be the multigraded Hilbert polynomial of $A$ (see, e.g.,~\cite[Theorem 4.1]{HERMANN_MULTIGRAD}, \cite[Theorem 3.4]{MIXED_MULT}).
		Then, the degree of $P_A$ is equal to $q = d - s$ and 
		$
		P_A(\bn) = \dim_\kk\left([A]_\bn\right) 
		$
		for all $\bn \in \NN^s$ such that $\bn \gg \mathbf{0}$.
		Furthermore, if we write 
		\begin{equation*}
			P_{A}(\bn) = \sum_{d_1,\ldots,d_s \ge 0} e(d_1,\ldots,d_s)\binom{n_1+d_1}{d_1}\cdots \binom{n_s+d_s}{d_s},
		\end{equation*}
		then $0 \le e(d_1,\ldots,d_s) \in \ZZ$ for all $d_1+\cdots+d_s = q$.
		For $\dd = (d_1,\ldots,d_s)$ with $|\dd| = q$, we say that 
		$$
		e(\dd; A) \; := \; e(d_1,\ldots,d_s)
		$$ 
		is the \emph{mixed multiplicity of $A$ of type $\dd$}.
		We define a function 
		$$
		F_A(\bn) \;:=\; \lim_{n \to \infty} \frac{\dim_{\kk}\left([A]_{n\bn}\right)}{n^q}  \;= \; \lim_{n \to \infty} \frac{\dim_{\kk}\left([A]_{(nn_1,\ldots,nn_s)}\right)}{n^q}.
		$$
		For all $\bn  \in \ZZ_+^s$, we have $F_A(\bn) = G_A(\bn)$ where $G_A(\bn)$ is the homogeneous polynomial 
		$$
		G_A(\bn) \;:=\;  \sum_{|\dd| = q} \frac{1}{\dd!} \,e(\dd; A)\, \bn^\dd \;=\;  \sum_{d_1+\cdots+d_s = q} \frac{e(d_1,\ldots,d_s; A)}{d_1!\cdots d_s!} \, n_1^{d_1}\cdots n_s^{d_s}.
		$$
		So, the function $F_A(\bn)$ encodes the mixed multiplicities of $A$. 
	\end{remark}
	
	Let $\kk$ be a field and $R$ be a $\kk$-domain.
	We introduce new variables $t_1,\ldots,t_s$ over $R$ and consider $R[t_1,\ldots,t_s]$ as a standard $\NN^s$-graded polynomial ring where $\deg(t_i) = \ee_i  \in \NN^s$.
	We have the following definition.
	
	\begin{definition}
		\label{def_multigrad_integral}
		\begin{enumerate}[(i)]
			\item An $\NN^s$-graded $\kk$-algebra $A = \bigoplus_{\bn \in \NN^s} [A]_{\bn} \subset R[t_1,\ldots,t_s]$ is called of \emph{integral type} if $A$ is finitely generated over $\kk$ and is a finite module over the subalgebra generated by $[A]_{\ee_1}, [A]_{\ee_2},\ldots,[A]_{\ee_s}$.
			\item An $\NN^s$-graded $\kk$-algebra $A = \bigoplus_{\bn \in \NN^s} [A]_{\bn} \subset R[t_1,\ldots,t_s]$ is called of \emph{almost integral type} if $A \subset B \subset R[t_1,\ldots,t_s]$, where $B$ is an $\NN^s$-graded algebra of integral type.
		\end{enumerate}
	\end{definition}
	
	The following setup is used throughout the rest of this section.
	\begin{setup}
		\label{setup_mult_grad}
		Let $\kk$ be a field and $R$ be a $\kk$-domain.
		Let $A \subset R[t_1,\ldots,t_s]$ be an $\NN^s$-graded $\kk$-algebra of almost integral type.
		Set $d = \dim(A)$ and $q = d-s$.
		We assume that $[A]_{\ee_i} \neq 0$ for all $1 \le i \le s$.
	\end{setup}

	Let $\bn = (n_1,\ldots,n_s)  \in \ZZ_+^s$. 
	We consider the graded $\kk$-algebra 
	\begin{equation}
		\label{eq_veronese_subalg}
		A^{(\bn)} \;:=\; \bigoplus_{n =0}^\infty [A]_{n\bn}.
	\end{equation}
	By definition, $A^{(\bn)}$ is a graded $\kk$-algebra of almost integral type with a canonical inclusion $A^{(\bn)} \hookrightarrow R[t]$ into a standard graded polynomial ring $R[t]$.
	Denote by $A_{(0)} \subset \Quot(A)$ the subfield of fractions of multi-homogeneous elements with the same degree, that is, 
	$$
	A_{(0)} \;:=\; \Big\lbrace \frac{x}{y} \in \Quot(A) \mid x,y \in A \text{ multi-homogeneous elements with } y\neq 0 \text{ and } \deg(x) = \deg(y) \Big\rbrace.
	$$
	Let $0 \neq f_i \in [A]_{\ee_i}$ for each $1 \le i \le s$.
	Since $\Quot(A) = A_{(0)}\left(f_1,\ldots,f_s\right)$ with $f_1,\ldots,f_s$ transcendental elements over $A_{(0)}$, \autoref{thm_dim_subfinite_alg} yields that $\dim(A) = s + \trdeg_\kk\left(A_{(0)}\right)$.
	In the same way, $\Quot(A^{(\bn)}) = A_{(0)} \left(f_1^{n_1}\cdots f_s^{n_s}\right)$ with $f_1^{n_1}\cdots f_s^{n_s}$ transcendental over $A_{(0)}$, and \autoref{thm_dim_subfinite_alg} yields that $\dim(A^{(\bn)}) = 1 + \trdeg_\kk\left(A_{(0)}\right)$.
	It follows that $\dim(A^{(\bn)}) = \dim(A) - s +1 = d-s+1$. 
	Therefore, for $\bn  \in \ZZ_+^s$,  \autoref{thm_limit_graded_domain} applied to the algebra $A^{(\bn)}$ gives a well-defined function 
	\begin{equation}
		\label{eq_function_A}
		F_A(\bn) \;:= \; \lim_{n \to \infty} \frac{\dim_{\kk}\left([A]_{n\bn}\right)}{n^q}
	\end{equation}
	with $q = d -s$.
	We say that $F_A(\bn)$ is the \emph{volume function} of the $\NN^s$-graded $\kk$-algebra $A$. 
	
	The following simple example shows that, for an arbitrary algebra of almost integral type $A$, $F_A(\bn)$ may not coincide with a polynomial (also, see \cite[\S 7]{cutkosky2019}).
	So, in general we do not have a suitable extension of \autoref{rem_standard_graded}.
	
	\begin{example}
	    \label{examp_non_poly}
		Let $R=\kk[x]$ be a polynomial ring and consider $R[t_1,t_2]$.
		Let $\alpha(n_1,n_2) = \Big\lceil 2\sqrt{n_1^2+n_2^2} \Big\rceil$, where $\lceil \beta \rceil$ denotes the ceiling function of a real number $\beta \in \RR$.
		We define the family of vector spaces
		$$
		[A]_{(n_1,n_2)} \;=\; \left( \bigoplus_{j=\alpha(n_1,n_2)}^{2(n_1+n_2)} \, \kk \cdot x^j \right) t_1^{n_1} t_2^{n_2} \;\subset\; {\Big[R[t_1,t_2]\Big]}_{(n_1,n_2)}
		$$
		over $\kk$.
		Then, $A = \bigoplus_{(n_1,n_2) \in \NN^2} [A]_{(n_1,n_2)} \subset R[t_1,t_2]$ is naturally an $\NN^2$-graded $\kk$-algebra.
		Since $A \subset B$ with $B = \kk\left[xt_1, x^2t_1, xt_2, x^2t_2\right]$, it follows that $A$ is an $\NN^2$-graded $\kk$-algebra of almost integral type. 
		However, in this case the corresponding function $F_A(n_1,n_2)$ is equal to 
		\begin{align*}
			F_A(n_1,n_2) &= \lim_{n \to \infty} \frac{\dim_{\kk}\left([A]_{(nn_1,nn_2)}\right)}{n}\\ &= \lim_{n \to \infty} \frac{2(nn_1+nn_2) - \alpha(nn_1,nn_2) +1}{n} \\
			& = 2(n_1+n_2) -  2\sqrt{n_1^2+n_2^2}.
		\end{align*}
		Therefore, in this case $F_A(n_1,n_2)$ is not a polynomial function. 	
	\end{example}

	\subsection{Global Newton-Okounkov bodies}
	Here we study the function $F_A(\bn)$ for an arbitrary algebra of almost integral type and we relate it to the construction of certain ``global Newton-Okounkov bodies''.
	We continue using \autoref{setup_mult_grad}.
	Since we may assume that $R$ is finitely generated over $\kk$ (see \autoref{rem_finite_gen}), we fix a valuation $\nu: \Quot(R) \to \ZZ^r$ that satisfies the conditions of \autoref{prop_bounded_val}; in particular, $\nu$ has leaves of bounded dimension.
	By using the fixed valuation $\nu: \Quot(R) \to \ZZ^r$, we make the following construction.
	
	\begin{definition}
		\label{def_glob_NO}
		Let $\Gamma_A$ be the valued semigroup of the $\NN^s$-graded algebra $A$:
		\[
		\Gamma_A \;:=\; \big\{(\nu(a),\bm)\in  \ZZ^{r}\times\NN^{s} \;\mid\; 0 \neq  a\in [A]_\bm \big\}.
		\]
		We define $\Delta(A) = \Con(\Gamma_A) \subset \RR^r \times \RR_{\geq 0}^s$ to be the closed convex cone generated by $\Gamma_A$ and we call it the \emph{global Newton-Okounkov body} of $A$.
	\end{definition}
	
	\begin{remark}
	    By abusing notation and following \cite{lazarsfeld09}, although $\Delta(A)$ is actually a cone, we call $\Delta(A)$ the global Newton-Okounkov body.
	\end{remark}
	
	We consider the following diagram 
	\begin{center}
		\begin{tikzpicture}
			\matrix (m) [matrix of math nodes,row sep=1.8em,column sep=2.5em,minimum width=2.5em, text height=1.5ex, text depth=0.25ex]
			{
				\Delta(A) &               & \RR^r \times \RR_{\ge 0}^s & \\
				&  \RR^r & & \RR_{\ge 0}^s\\				
			};
			\path[-stealth]
			(m-1-3) edge node [above]	{$\pi_1$} (m-2-2)
			(m-1-3) edge node [above]  {$\pi_2$} (m-2-4)
			;	
			\draw[right hook->] (m-1-1)--(m-1-3);			
		\end{tikzpicture}	
	\end{center}
	where $\pi_1  : \RR^{r} \times \RR_{\ge 0}^s \rightarrow \RR^r$ and $\pi_2  : \RR^{r} \times \RR_{\ge 0}^s \rightarrow \RR_{\ge 0}^s$ denote the natural projections.
	We denote the \emph{fiber} of the global Newton-Okounkov body $\Delta(A)$ over $x\in \RR_{\geq 0}^s$ by $\Delta(A)_x := \Delta(A) \cap \pi_2^{-1}(x)$.
	Since $A$ is a domain and $[A]_{\ee_i} \neq 0$ for all $1 \le i \le s$, it follows that $m(A^{(\bn)}) = 1$ and $\Delta(A^{(\bn)}) \subset \RR^r \times \{1\} \subset \RR^r \times \RR_{\ge 0}$.
	The following theorem describes the Newton-Okounkov bodies of the graded $\kk$-algebra $A^{(\bn)}$ (see \autoref{eq_veronese_subalg}).
	
	\begin{theorem}
		\label{thm-globalNO}
		Assume \autoref{setup_mult_grad}.
		The fiber $\Delta(A)_{\bn}$ of the global Newton-Okounkov body $\Delta(A) \subset \RR^r \times \RR_{\ge 0}^s$  coincides with the Newton-Okounkov body $\Delta(A^{(\bn)})$ for each $\bn\in \ZZ_+^s$, that is:
		$$
		\pi_1\left(\Delta(A)_\bn\right) \times \{1\} \;= \; \pi_1 \left(\Delta(A) \cap \pi_2^{-1}(\bn) \right) \times \{1\} \;=\; \Delta(A^{(\bn)}) \; \subset \; \RR^r \times \{1\}.
		$$
	\end{theorem}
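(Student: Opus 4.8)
The plan is to pass between the two bodies along the linear map that rescales the degree direction, and then to verify the two inclusions separately; essentially all of the content sits in one of them. As a preliminary, note that $\Gamma_A$ is a semigroup: since $R[t_1,\dots,t_s]$ is a domain, for $0\ne a\in[A]_\bm$ and $0\ne b\in[A]_{\bm'}$ the product $ab\in[A]_{\bm+\bm'}$ is nonzero and $\nu(ab)=\nu(a)+\nu(b)$. Hence $\mathrm{cone}_{\QQ}(\Gamma_A)=\{0\}\cup\bigcup_{N\in\ZZ_+}\tfrac1N\Gamma_A$, and since the rational cone is dense in the real one, $\Delta(A)=\Con(\Gamma_A)=\overline{\bigcup_{N\ge 1}\tfrac1N\Gamma_A}$. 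The same remarks apply to the singly graded Veronese $A^{(\bn)}$ of \autoref{eq_veronese_subalg}, whose valued semigroup in the sense of \autoref{subsect_R_domain} is $\Gamma_{A^{(\bn)}}=\{(\nu(a),n)\mid 0\ne a\in[A]_{n\bn}\}$; using $m(A^{(\bn)})=1$ one gets $\Delta(A^{(\bn)})=\Con(\Gamma_{A^{(\bn)}})\cap(\RR^r\times\{1\})=\overline{\bigcup_{N\ge1}\tfrac1N\Gamma_{A^{(\bn)}}}\cap(\RR^r\times\{1\})$.

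For the inclusion $\Delta(A^{(\bn)})\subseteq\pi_1(\Delta(A)_\bn)\times\{1\}$, consider the linear map $\phi_\bn\colon\RR^r\times\RR\to\RR^r\times\RR^s$, $(v,t)\mapsto(v,t\bn)$. Whenever $0\ne a\in[A]_{n\bn}$ we have $(\nu(a),n\bn)\in\Gamma_A$, so $\phi_\bn(\Gamma_{A^{(\bn)}})\subseteq\Gamma_A$; by linearity and continuity $\phi_\bn(\Con(\Gamma_{A^{(\bn)}}))\subseteq\Con(\Gamma_A)$, and since $\phi_\bn$ sends $\RR^r\times\{1\}$ into $\pi_2^{-1}(\bn)$, intersecting with $\pi_2^{-1}(\bn)$ and applying $\pi_1$ gives the inclusion.

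For the reverse inclusion, take $(w,\bn)\in\Delta(A)_\bn$. By the preliminary observation there are $N_k\in\ZZ_+$ and $0\ne a_k\in[A]_{\bm_k}$ with $\tfrac1{N_k}\bm_k\to\bn$ and $\tfrac1{N_k}\nu(a_k)\to w$; one may assume $N_k\to\infty$ (otherwise, passing to a subsequence with $N_k$ constant, $\bm_k$ is eventually $N\bn$ and we conclude directly). Fix $0\ne f_i\in[A]_{\ee_i}$ for each $i$ and put $M_k:=\big\lceil\max_i m_{k,i}/n_i\big\rceil$, so that $M_k\bn\ge\bm_k$ coordinatewise and $M_k/N_k\to 1$. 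Then $b_k:=a_k\,f_1^{\,M_kn_1-m_{k,1}}\cdots f_s^{\,M_kn_s-m_{k,s}}$ is a nonzero element of $[A]_{M_k\bn}=[A^{(\bn)}]_{M_k}$ with
$$
\frac{1}{M_k}\nu(b_k)\;=\;\frac{N_k}{M_k}\cdot\frac{1}{N_k}\nu(a_k)\;+\;\sum_{i=1}^{s}\Big(n_i-\frac{m_{k,i}}{M_k}\Big)\nu(f_i)\;\longrightarrow\;w ,
$$
since $N_k/M_k\to 1$ and $m_{k,i}/M_k=(N_k/M_k)(m_{k,i}/N_k)\to n_i$. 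Hence $(w,1)\in\Con(\Gamma_{A^{(\bn)}})\cap(\RR^r\times\{1\})=\Delta(A^{(\bn)})$, i.e.\ $w\in\pi_1(\Delta(A^{(\bn)}))$. Combined with the previous paragraph this yields $\pi_1(\Delta(A)_\bn)\times\{1\}=\Delta(A^{(\bn)})$.

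I expect the crux to be this last inclusion: a priori $\bm_k$ is only \emph{close} to $N_k\bn$, not dominated by it, so one cannot simply strip off powers of the $f_i$ from $a_k$ to land in a fixed Veronese strand. The remedy is the slight over-shoot in the definition of $M_k$, which still satisfies $M_k/N_k\to 1$, so that every error term in the displayed identity vanishes in the limit. It is also worth being careful to justify the sequential description $\Delta(A)=\overline{\bigcup_N\tfrac1N\Gamma_A}$, which is exactly the point where $\Gamma_A$ being a semigroup (equivalently, $A\subset R[t_1,\dots,t_s]$ being a domain) is used.
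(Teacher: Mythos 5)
Your proof is correct and follows essentially the same route as the paper: both rest on the identification $\Gamma_{A^{(\bn)}}=\{(\bm,n)\mid (\bm,n\bn)\in\Gamma_A\}$ together with the description of the Newton--Okounkov body as the closure of $\bigcup_{N\ge 1}\tfrac1N\Gamma$. The only difference is that the paper asserts the key equality $\overline{\{(\bm/n,\bn)\mid(\bm,n\bn)\in\Gamma_A\}}=\Delta(A)\cap\pi_2^{-1}(\bn)$ without further comment (via \autoref{lem-NObody-closure}), whereas your argument with the correction factors $f_i^{\,M_kn_i-m_{k,i}}$ actually justifies the nontrivial inclusion there --- a justification that is genuinely needed, since for a general closed cone the fiber of the closure can strictly contain the closure of the fiber, and it is exactly the hypotheses $\bn\in\ZZ_+^s$ and $[A]_{\ee_i}\neq 0$ that make your argument work.
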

	
	For the proof of \autoref{thm-globalNO} it is convenient to use the following well-known result.
	\begin{lemma}
		\label{lem-NObody-closure}
		Let $S\subset \ZZ^{r} \times \NN$ be a strongly non-negative semigroup. The Newton-Okounkov body $\Delta(S)$ can be computed as 
		$$
		\Delta(S) \;=\; \overline{\left\{\left(\frac{\bn}{n},m\right)\in \RR^{r}\times\RR_{\ge 0} \;\mid\; (\bn,nm) \in S \;\text{ for all }\; n > 0 \right\}},
		$$
		where $m = m(S)$.
	\end{lemma}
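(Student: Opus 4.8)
The plan is to deduce the lemma from the elementary description of the closed convex cone generated by a lattice semigroup, namely that $\Con(S)=\overline{\bigcup_{n>0}\tfrac1n S}$, where $\tfrac1n S:=\{s/n\mid s\in S\}$. One inclusion is clear because $s/n\in\Con(S)$ for all $s\in S$ and $n>0$. For the other, every finite nonnegative combination $\sum_i\lambda_i s_i$ with $s_i\in S$ is a limit of rational combinations $\tfrac1q\sum_i p_i s_i$ with $p_i,q\in\NN$, and $\sum_i p_i s_i\in S$ since $S$ is closed under addition; hence the set of all such combinations, whose closure is $\Con(S)$ by definition, is contained in $\overline{\bigcup_{n>0}\tfrac1n S}$, and taking closures yields the equality.

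Write $m:=m(S)$ (a finite positive integer once $S\neq\{0\}$, which we assume) and let $T$ denote the set on the right-hand side of the lemma, i.e.\ $T=\{(\bn/n,m)\mid n\in\ZZ_+,\ \bn\in\ZZ^r,\ (\bn,nm)\in S\}$. The inclusion $\overline{T}\subseteq\Delta(S)$ is immediate: a point $(\bn/n,m)\in T$ equals $\tfrac1n(\bn,nm)$ with $(\bn,nm)\in S$, hence lies in $\Con(S)$, and it lies in $\pi^{-1}(m)$; since $\Delta(S)=\Con(S)\cap\pi^{-1}(m)$ is closed, $\overline{T}\subseteq\Delta(S)$ follows.

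For the reverse inclusion I would take $p\in\Delta(S)$ and, using the first paragraph, write $p=\lim_k p_k$ with $p_k=\tfrac1{n_k}(\bn_k,c_k)$ and $(\bn_k,c_k)\in S$. Since $\pi(p_k)=c_k/n_k\to\pi(p)=m>0$, we have $c_k>0$ for $k\gg0$; moreover $c_k\in\pi(G(S))=m\ZZ$, so $c_k=m\ell_k$ with $\ell_k\in\ZZ_+$. Rescaling each $p_k$ by the positive scalar $m/\pi(p_k)$ gives $q_k:=\tfrac{m}{\pi(p_k)}p_k=\tfrac1{\ell_k}(\bn_k,c_k)=(\bn_k/\ell_k,\,m)$, which belongs to $T$ (take $n=\ell_k$, noting $(\bn_k,\ell_k m)=(\bn_k,c_k)\in S$), and $q_k\to p$ because $m/\pi(p_k)\to1$. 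Hence $p\in\overline{T}$, completing the proof.

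I expect the only real subtlety to be that intersection with a hyperplane does not commute with closure for general sets; the rescaling step in the third paragraph is precisely what repairs this, and it works because the slicing level $m$ is strictly positive while $\pi(p_k)\to m$. Strong non-negativity of $S$ is used only to ensure that $\Delta(S)$ is a genuine compact convex body and that $m(S)$ is finite, so that the statement is meaningful; the combinatorial heart of the argument is the cone identity $\Con(S)=\overline{\bigcup_{n>0}\tfrac1n S}$.
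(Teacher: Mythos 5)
Your proof is correct. One important point of context: the paper does not actually supply a proof of this lemma; it is labeled ``well-known'' and used immediately in the proof of Theorem~4.3, so there is no argument in the paper to compare yours against.

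Your strategy --- reducing to the identity $\Con(S)=\overline{\bigcup_{n>0}\tfrac1n S}$ and then carefully rescaling the approximating sequence so that it lives on the slice $\pi^{-1}(m)$ --- is clean and handles the one genuine subtlety (that intersecting with a hyperplane does not commute with closure) exactly where it needs to be handled. The divisibility observation that $c_k\in\pi(G(S))=m\ZZ$, so $c_k=m\ell_k$ and the rescaled points $q_k=(\bn_k/\ell_k,m)$ actually lie in $T$ rather than merely near it, is the key step and you state it correctly. A couple of small things worth making explicit if this were to be written up in full: in establishing $\Con(S)=\overline{\bigcup_{n>0}\tfrac1n S}$ one should note that $0$ is covered (e.g.\ via $s/n\to 0$ for a fixed $s\in S$), since a semigroup need not contain the origin; and the rescaling argument silently uses that $(p_k)$ is bounded (which it is, being convergent) so that $\tfrac{m}{\pi(p_k)}p_k\to p$. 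Neither is a gap, just polish. Your closing remarks about where strong non-negativity enters (finiteness of $m(S)$, compactness of $\Delta(S)$) are accurate.
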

	\begin{proof}[Proof of \autoref{thm-globalNO}]
		Fix a vector $\bn\in \ZZ_+^s$ of positive integers. 
		The statement follows from the fact that the valued semigroup of $A^{(\bn)}$ is given by
		$$
		\Gamma_{A^{(\bn)}}  \;=\; \big\lbrace (\bm, n) \in \ZZ^r \times \NN \;\mid\; (\bm, n\bn) \in  \Gamma_A \big\rbrace.
		$$
		Hence, by \autoref{lem-NObody-closure} we have that
		\begin{eqnarray*}
			\Delta(A^{(\bn)}) &=& \overline{\left\{\left(\frac{\bm}{n}, 1\right) \in  \RR^{r}\times \RR_{\ge 0} \;\mid\; (\bm, n\bn) \in  \Gamma_A \;\text{ for all }\; n > 0 \right\}} \\
			&=& \pi_1 \left(  \overline{\left\{\left(\frac{\bm}{n},\bn\right)\in \RR^{r}\times \RR_{\ge 0}^s  \;\mid\; (\bm, n\bn) \in  \Gamma_A \;\text{ for all }\; n > 0 \right\}} \right) \times \{ 1 \} \\ 
			&=& \pi_1 \left(\Delta(A) \cap \pi_2^{-1}(\bn) \right) \times \{1\} \\
			&=& 	\pi_1\left(\Delta(A)_\bn\right) \times \{1\},
		\end{eqnarray*}
		which completes the proof.
	\end{proof}
	
	The following lemma provides a required uniformity result for the algebras $A^{(\bn)}$.
	
	\begin{lemma}
		\label{lem_uniform_veronese}
		Let $\bn \in \ZZ_+^s$ and $\bm \in \ZZ_+^s$.
		Then, one has $\ind(A^{(\bn)}) = \ind(A^{(\bm)})$ and $\ell_{A^{(\bn)}} = \ell_{A^{(\bm)}}$.
	\end{lemma}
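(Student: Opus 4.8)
The plan is to deduce both equalities from a single ``transport'' device: since $A$ is a domain and $[A]_{\ee_i}\neq 0$ for all $i$ (\autoref{setup_mult_grad}), the product $f_1^{v_1}\cdots f_s^{v_s}$ of nonzero elements $f_i\in[A]_{\ee_i}$ is a nonzero element of $[A]_{\bv}$, so in fact $[A]_{\bv}\neq 0$ for \emph{every} $\bv\in\NN^{s}$. Hence, given $\bn,\bm\in\ZZ_+^{s}$ and $k\ge 1$, one may choose $l\gg 0$ with $l\bm\ge k\bn$ entrywise and then a nonzero $c\in[A]_{l\bm-k\bn}$. Multiplication by such a $c$ is injective on $[A]_{k\bn}$ (domain), maps $[A]_{k\bn}$ into $[A]_{l\bm}$, and, being a valuation, satisfies $\nu(ac)=\nu(a)+\nu(c)$; consequently it carries any finite family of nonzero elements of $[A]_{k\bn}$ having a common value $\bm'$ and $\kk$-linearly independent classes in $K_{\bm'}\cap[A]_{k\bn}/K_{\bm'}^{+}\cap[A]_{k\bn}$ to a family of the same cardinality in $[A]_{l\bm}$ with common value $\bm'+\nu(c)$ and still linearly independent classes (if $\sum\lambda_i a_i c$ had value $>\bm'+\nu(c)$ then, cancelling $\nu(c)$, $\sum\lambda_i a_i$ would have value $>\bm'$, contradicting independence unless all $\lambda_i=0$).

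\textbf{Equality of the $\ell$'s.} Recall $\ell_{A^{(\bn)}}=\max\{t:\Gamma^{(t)}_{A^{(\bn)}}\not\subseteq\{0\}\}$, and since $[A^{(\bn)}]_{k}=[A]_{k\bn}$ this equals $\sup_{k\ge 1,\ \bm'\in\ZZ^{r}}\dim_{\kk}\!\big(K_{\bm'}\cap[A]_{k\bn}/K_{\bm'}^{+}\cap[A]_{k\bn}\big)$, a finite quantity bounded above by the maximal leaf dimension $\ell$ of $\nu$. Take $t=\ell_{A^{(\bn)}}$, realized by classes at some $(k,\bm')$; applying the transport device with a suitable $c\in[A]_{l\bm-k\bn}$ produces $t$ linearly independent classes in $[A]_{l\bm}$ with common value $\bm'+\nu(c)$, whence $\ell_{A^{(\bm)}}\ge t=\ell_{A^{(\bn)}}$. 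By symmetry $\ell_{A^{(\bn)}}=\ell_{A^{(\bm)}}$.

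\textbf{Equality of the indices.} Let $H_{\bn}\subseteq\ZZ^{r}$ be the subgroup generated by all differences $\nu(a)-\nu(b)$ with $k\ge 1$ and $a,b\in[A]_{k\bn}\setminus\{0\}$. The first task is to identify $\ind(A^{(\bn)})$ in terms of $H_{\bn}$: for the (non-negative, indeed strongly non-negative by \autoref{prop_properties_semig}) valued semigroup $\Gamma_{A^{(\bn)}}$ one checks that $G(\Gamma_{A^{(\bn)}})\cap\partial M=H_{\bn}\times\{0\}$ and $\partial M_{\ZZ}=\big((H_{\bn}\otimes\RR)\cap\ZZ^{r}\big)\times\{0\}$, so that $\ind(A^{(\bn)})=[\partial M_{\ZZ}:G(\Gamma_{A^{(\bn)}})\cap\partial M]$ is determined entirely by the lattice $H_{\bn}$. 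Finally, the transport device gives $H_{\bn}=H_{\bm}$: a generator $\nu(a)-\nu(b)$ of $H_{\bn}$ (with $a,b\in[A]_{k\bn}\setminus\{0\}$) equals $\nu(ac)-\nu(bc)$ with $ac,bc\in[A]_{l\bm}\setminus\{0\}$, hence lies in $H_{\bm}$, and symmetrically. Therefore $\ind(A^{(\bn)})=\ind(A^{(\bm)})$.

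\textbf{Expected obstacle.} None of the above is deep; the only point needing genuine care is the identification $G(\Gamma_{A^{(\bn)}})\cap\partial M=H_{\bn}\times\{0\}$. One must show that an arbitrary $\ZZ$-combination of elements of $\Gamma_{A^{(\bn)}}$ whose last coordinate cancels to zero can be rewritten as a single difference $\nu(x)-\nu(y)$ with $x,y$ in one common piece $[A]_{K\bn}$; this is precisely where the domain hypothesis (products of elements from several Veronese pieces lie in a single one) and the valuation axiom $\nu(xy)=\nu(x)+\nu(y)$ are used to collapse the positive and negative parts of the combination. Once this bookkeeping is done, the rest is formal.
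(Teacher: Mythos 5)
Your proof is correct, and it takes a genuinely different route from the paper's. The paper proves the two inequalities $\ind(A^{(\bn)})\ge\ind(A^{(\bn+\bm)})$, $\ell_{A^{(\bn)}}\le\ell_{A^{(\bn+\bm)}}$ from the multiplication map $[A^{(\bn)}]_n\otimes_\kk[A^{(\bm)}]_n\to[A^{(\bn+\bm)}]_n$ (by the same mechanism as in \autoref{prop_properties_semig}), then sandwiches: choosing $k$ with $(k-1)\bn>\bm$, the second map $[A^{(\bn+\bm)}]_n\otimes_\kk[A^{((k-1)\bn-\bm)}]_n\to[A^{(k\bn)}]_n$ gives $\ind(A^{(\bn+\bm)})\ge\ind(A^{(k\bn)})$ and $\ell_{A^{(\bn+\bm)}}\le\ell_{A^{(k\bn)}}$, and the chain closes up via the Veronese invariance $\ind(A^{(\bn)})=\ind(A^{(k\bn)})$, $\ell_{A^{(\bn)}}=\ell_{A^{(k\bn)}}$. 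You instead transport directly: a linearly independent family realizing $\ell_{A^{(\bn)}}$ in some $[A]_{k\bn}$, or a generator $\nu(a)-\nu(b)$ of the difference lattice $H_\bn$, is carried into $[A]_{l\bm}$ by multiplying by a nonzero $c\in[A]_{l\bm-k\bn}$, giving $\ell_{A^{(\bm)}}\ge\ell_{A^{(\bn)}}$ and $H_\bn\subseteq H_\bm$ in one stroke, with symmetry closing both. Your approach has the merit of being fully explicit: the paper's invocation of $\ind(A^{(\bn)})=\ind(A^{(k\bn)})$ is stated without argument and actually needs exactly your ``collapse to a single graded piece'' observation (together with $[A]_\bv\neq 0$ for all $\bv$) to justify, so your write-up makes visible a step the paper leaves implicit. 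Your identification $\ind(A^{(\bn)})=\left[\big((H_\bn\otimes\RR)\cap\ZZ^r\big):H_\bn\right]$ is also correct (the real span of $\Gamma_{A^{(\bn)}}$ is $(H_\bn\otimes\RR)\times\{0\}+\RR\cdot(\nu(f),1)$ for a fixed nonzero $f\in[A]_\bn$, whence $\partial M=(H_\bn\otimes\RR)\times\{0\}$), and isolates cleanly why the index depends only on the lattice $H_\bn$. Both proofs rest on the same two ingredients --- the domain hypothesis and the valuation axiom $\nu(xy)=\nu(x)+\nu(y)$ --- so neither is strictly more general; yours is shorter and more self-contained, the paper's factors the argument through reusable one-directional inequalities that it also exploits elsewhere.
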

	\begin{proof}
		For each $n \ge 0$ we have a canonical multiplication map
		$$
		\left[A^{(\bn)}\right]_n \otimes_\kk \left[A^{(\bm)}\right]_n \rightarrow \left[A^{(\bn+\bm)}\right]_n, \quad a \otimes_\kk a' \mapsto aa'.
		$$
		Then, by proceeding similarly to \autoref{prop_properties_semig}, we obtain that $\ind(A^{(\bn)}) \ge \ind(A^{(\bn+\bm)})$ and $\ell_{A^{(\bn)}} \le \ell_{A^{(\bn+\bm)}}$.
		
		Take $k > 0$ big enough such that $(k-1)\cdot\bn > \bm$.
		As above, by considering the multiplication maps 	$
		\big[A^{(\bn+\bm)}\big]_n \otimes_\kk \big[A^{((k-1)\cdot\bn-\bm)}\big]_n \rightarrow \big[A^{(k \cdot \bn)}\big]_n,
		$
		we get $\ind(A^{(\bn+\bm)}) \ge \ind(A^{(k\cdot\bn)})$ and $\ell_{A^{(\bn+\bm)}} \le \ell_{A^{(k \cdot \bn)}}$.
		Since $\ind(A^{(\bn)}) = \ind(A^{(k\cdot\bn)})$ and $\ell_{A^{(\bn)}} = \ell_{A^{(k \cdot \bn)}}$, it follows that $\ind(A^{(\bn)}) = \ind(A^{(\bn+\bm)})$ and $\ell_{A^{(\bn)}} = \ell_{A^{(\bn+\bm)}}$.
		
		Symmetrically, we also have that $\ind(A^{(\bm)}) = \ind(A^{(\bn+\bm)})$ and $\ell_{A^{(\bm)}} = \ell_{A^{(\bn+\bm)}}$.
	\end{proof}
	
	\begin{definition}
		\label{def_uniform_A}
		By using \autoref{lem_uniform_veronese}, we set:
		\begin{enumerate}[(i)]
			\item $\ind(A)$ be the index of $A$ which is  the constant value of $\ind(A^{(\bn)})$ for all $\bn \in \ZZ_+^s$.
			\item $\ell_A$ be the maximal dimension of leaves of $A$ which is the constant value of $\ell_{A^{(\bn)}}$ for all $\bn \in \ZZ_+^s$.
		\end{enumerate}  
	\end{definition}
	
	The corollary below gives an important characterization of the volume function of $A$ in terms of the global Newton-Okounkov body $\Delta(A)$.
	
	\begin{corollary}\label{cor-global}
		Assume \autoref{setup_mult_grad}.
		There exists a unique continuous homogeneous function of degree $q$ extending the volume function $F_A(\bn)$ defined in \autoref{eq_function_A} to the positive orthant $\RR_{\geq 0}^s$.
		This function is given by 
		$$
		F_A : \RR_{\geq 0}^s\to \RR, \qquad x \mapsto \ell_A \cdot \frac{\Vol_q(\Delta(A)_x)}{\ind(A)}.
		$$
		Moreover, the function $F_A$ is log-concave:
		$$
		F_A(x+y)^{\frac1q} \; \geq \; F_A(x)^{\frac1q}+F_A(y)^{\frac1q}\quad\text{for all } x,y\in \RR_{\geq 0}^s.
		$$
	\end{corollary}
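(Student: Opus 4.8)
The plan is to show that the map $\widetilde{F}\colon\RR_{\geq 0}^s\to\RR$, $x\mapsto \ell_A\cdot\Vol_q(\Delta(A)_x)/\ind(A)$, is continuous, homogeneous of degree $q$, and log-concave, and that it agrees with the volume function $F_A(\bn)$ of \autoref{eq_function_A} on $\ZZ_+^s$; uniqueness of such an extension will then be automatic. (I take $q\geq 1$; the case $q=0$ is immediate.)

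First, the agreement on $\ZZ_+^s$ and the explicit formula. Fix $\bn\in\ZZ_+^s$. As recalled before \autoref{eq_function_A}, the Veronese algebra $A^{(\bn)}=\bigoplus_{n\geq 0}[A]_{n\bn}$ is a graded $\kk$-algebra of almost integral type with $\dim(A^{(\bn)})=q+1$ and $m(A^{(\bn)})=1$, so \autoref{thm_limit_graded_domain} applied to $A^{(\bn)}$ gives
$$
F_A(\bn)=\lim_{n\to\infty}\frac{\dim_\kk\big([A]_{n\bn}\big)}{n^{q}}=\ell_{A^{(\bn)}}\cdot\frac{\Vol_q\big(\Delta(A^{(\bn)})\big)}{\ind(A^{(\bn)})}.
$$
By \autoref{def_uniform_A} we have $\ell_{A^{(\bn)}}=\ell_A$ and $\ind(A^{(\bn)})=\ind(A)$, and by \autoref{thm-globalNO} the projection $\pi_1$ carries the fiber $\Delta(A)_\bn$ onto $\Delta(A^{(\bn)})$ (it only deletes coordinates that are constant on this fiber, hence preserves the integral volume), so $\Vol_q(\Delta(A)_\bn)=\Vol_q(\Delta(A^{(\bn)}))$ and $F_A(\bn)=\widetilde{F}(\bn)$.

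Next, the structural properties of $\widetilde{F}$, all coming from the fact that $\Delta(A)=\Con(\Gamma_A)$ is a \emph{closed convex cone}. Since $(v,x)\in\Delta(A)\iff(\lambda v,\lambda x)\in\Delta(A)$ for $\lambda>0$, we get $\Delta(A)_{\lambda x}=\lambda\Delta(A)_x$ and thus $\widetilde F(\lambda x)=\lambda^q\widetilde F(x)$. Closure under addition gives $\Delta(A)_{x+y}\supseteq\Delta(A)_x+\Delta(A)_y$ (Minkowski sum), and since under $\pi_1$ all fibers lie in translates of a single $q$-dimensional rational subspace of $\RR^r$ with the translates additive in the base point, the Brunn--Minkowski inequality yields $\widetilde F(x+y)^{1/q}\geq\widetilde F(x)^{1/q}+\widetilde F(y)^{1/q}$; together with homogeneity this makes $\widetilde F^{1/q}$ concave on $\RR_{\geq 0}^s$, and since $\widetilde F\geq 0$ it is in particular non-decreasing in each coordinate. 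Finally, by \autoref{prop_properties_semig}(iii) the Newton--Okounkov body $\Delta(A^{(\bn)})$, hence $\Delta(A)_\bn$, is compact for $\bn\in\ZZ_+^s$; as the recession cone of every nonempty fiber $\Delta(A)_x$ equals $\Delta(A)\cap(\RR^r\times\{0\})=\Delta(A)_{\mathbf{0}}$, all fibers are bounded, uniformly over compact subsets of $\RR_{\geq 0}^s$.

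Finally, continuity and uniqueness. A finite concave function is continuous on the open orthant $\RR_{>0}^s$. At a boundary point $x_0$, upper semicontinuity of $\widetilde F$ follows from the closedness of $\Delta(A)$ (any limit point of $\Delta(A)_{x_n}$ lies in $\Delta(A)_{x_0}$) together with the local uniform boundedness of fibers, via the Blaschke selection theorem and continuity of $\Vol_q$ under Hausdorff convergence; lower semicontinuity follows from monotonicity, since for $x_n\to x_0$ and fixed $0<\delta<1$ one has $\min(x_n,(1-\delta)x_0)=(1-\delta)x_0$ coordinatewise for $n\gg 0$, whence $\widetilde F(x_n)\geq(1-\delta)^q\widetilde F(x_0)$, and $\delta\to 0$ gives $\liminf_n\widetilde F(x_n)\geq\widetilde F(x_0)$. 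Hence $\widetilde F$ is continuous on $\RR_{\geq 0}^s$ and extends $F_A$, and any continuous degree-$q$ homogeneous function on $\RR_{\geq 0}^s$ is determined by its restriction to $\ZZ_+^s$, since by homogeneity it is then determined on the dense set $\QQ_{>0}^s$. The delicate point is exactly this boundary continuity: concavity alone does not force continuity on the boundary of a convex domain, so one genuinely needs the closedness and the bounded fibers of the cone $\Delta(A)$ for the upper estimate and the superadditivity-driven monotonicity for the lower one.
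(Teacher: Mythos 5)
Your proof is correct and follows the same overall strategy as the paper's: agreement on $\ZZ_+^s$ comes from combining \autoref{thm_limit_graded_domain}, \autoref{thm-globalNO}, and \autoref{lem_uniform_veronese}; homogeneity from $\Delta(A)_{\lambda x}=\lambda\Delta(A)_x$; and log-concavity from $\Delta(A)_x+\Delta(A)_y\subseteq\Delta(A)_{x+y}$ together with Brunn--Minkowski. Where you go noticeably further is continuity and uniqueness. The paper's proof is a two-line sketch that does not address continuity at all, and you are right that this is a genuine gap in what is written there: concavity gives continuity on the open orthant $\RR_{>0}^s$, but continuity up to the boundary of $\RR_{\geq 0}^s$ is a separate and nontrivial claim. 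Your argument for it — upper semicontinuity from closedness of the cone $\Delta(A)$ plus local uniform boundedness of the fibers (via a trivial recession cone, which you correctly deduce from compactness of $\Delta(A^{(\bn)})$, i.e.\ strong non-negativity in \autoref{prop_properties_semig}), and lower semicontinuity from the monotonicity forced by superadditivity and non-negativity — is sound. The observation that $\pi_1$ restricted to a fiber is an affine isomorphism onto $\Delta(A^{(\bn)})$, so that the integral volume is preserved, is also a detail the paper leaves tacit, and your uniqueness argument via density of $\QQ_{>0}^s$ and homogeneity is exactly what is needed. So this is the same route with the missing continuity/uniqueness steps genuinely supplied rather than a different proof.
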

	\begin{proof}
		By combining \autoref{thm_limit_graded_domain}, \autoref{thm-globalNO} and \autoref{lem_uniform_veronese}, it follows that $F_A : \RR_{\geq 0}^s\to \RR, \; x \mapsto \ell_A \cdot \frac{\Vol_q(\Delta(A)_x)}{\ind(A)}$ extends the function $F_A(\bn)$ defined in \autoref{eq_function_A} for all $\bn \in \ZZ_+^s$.
		
		Now, the homogeneity follows from the fact that $\Delta(A)_{\lambda\cdot x} =\lambda \cdot\Delta(A)_x$. Since $\Delta(A)$ is convex we have $\Delta(A)_x+\Delta(A)_y \subset \Delta(A)_{x+y}$ and hence the log-concavity follows from Brunn-Minkowski inequalities for the volume of convex bodies.
	\end{proof}
	
	As our next example shows, in general, \autoref{cor-global} is the most general statement about volume function $F_A$ for some multigraded algebra $A$.
	\begin{example}[Every concave function is a volume function]
		\label{ex-universal-volume}
		Let $R=\kk[u]$ be a polynomial ring and consider the polynomial ring $R[t_1,\ldots, t_s]$.
		Let further $f:\RR^s_{\geq 0} \to \RR_{\geq0}$ be any non-negative,  homogeneous of degree $1$, concave function. We define a family of vector spaces indexed by $\fn\in \NN^s$:
		\[
		[A]_{\fn} = \left(\bigoplus_{0\leq i\leq f(\fn)}\kk \cdot u^i \right) \ft^\fn\subset \Big[R[t_1,\ldots,t_s]\Big]_{\fn}
		\]
		The algebra $A_f:=\bigoplus_{\fn\in \NN^s} [A]_\bn \subset R[t_1,\dots,t_s]$ is naturally an $\NN^s$-graded algebra. Since $f$ is concave and homogeneous, the global Newton-Okounkov body $\Delta(A_f)$ of $A_f$ is a cone in $\RR^{s+1}$ given by:
		\[
		\Delta(A_f) =\left\{(y,x)\in \RR \times \RR^s_{\geq 0} \,\mid\, 0\leq y\leq f(x)\right\}.
		\]
		In particular, we get $F_{A_f}(x)=f(x)$, for any $x\in \RR^s_{\geq 0}$.
	\end{example}
	
	However in some cases one can say more about function $F_A(x)$.
	Let $C\subset \RR^s$ be a convex cone and consider a family $\{\Delta_x\}_{x\in C}$ of convex bodies of dimension $q$ parametrised by $C$. We say $\{\Delta_x\}_{x\in C}$ is {\it linear} if $\Delta_{\lambda_1x+\lambda_2y} = \lambda_1\Delta_x+\lambda_2\Delta_y$ for any $x,y\in C$ and $\lambda_1,\lambda_2>0$. According to Minkowski's theorem (see \autoref{thm_Minkowski}), the volume of a linear family of convex bodies is a homogeneous polynomial. Thus we obtain the following proposition.
	
	\begin{proposition}
		Assume that the fibers of the global Newton-Okounkov body form a linear family of convex bodies. 
		Then the function $F_A(x)$ is a homogeneous polynomial of degree $q$. 
	\end{proposition}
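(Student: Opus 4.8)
The plan is to deduce the statement from Minkowski's theorem (\autoref{thm_Minkowski}) using the description of $F_A$ obtained in \autoref{cor-global}. By that corollary, $F_A(x) = \frac{\ell_A}{\ind(A)}\,\Vol_q(\Delta(A)_x)$ for all $x \in \RR_{\geq 0}^s$, and $F_A$ is continuous and homogeneous of degree $q$; since $\ell_A/\ind(A)$ is a positive constant, it suffices to prove that $\bn \mapsto \Vol_q(\Delta(A)_\bn)$ is the restriction to $\ZZ_+^s$ of a homogeneous polynomial $P$ of degree $q$. Granting that, $F_A$ and $\frac{\ell_A}{\ind(A)}P$ are continuous functions that are homogeneous of degree $q$ and agree on $\ZZ_+^s$; since both satisfy $\varphi(\bn/n) = n^{-q}\varphi(\bn)$, they agree on $\QQ_{>0}^s$, which is dense in $\RR_{\geq 0}^s$, hence everywhere, so $F_A = \frac{\ell_A}{\ind(A)}P$ is the desired homogeneous polynomial.

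First I would use the linearity hypothesis to realize the fibers over integer points as Minkowski sums. Set $K_i := \Delta(A)_{\ee_i}$, a $q$-dimensional convex body; applying the relation $\Delta(A)_{\lambda_1 x+\lambda_2 y} = \lambda_1 \Delta(A)_x + \lambda_2 \Delta(A)_y$ repeatedly (equivalently: $\bn\mapsto\Delta(A)_\bn$ is additive on $\ZZ_+^s$, and all the fibers involved are $q$-dimensional), one obtains $\Delta(A)_\bn = n_1 K_1 + \cdots + n_s K_s$ for every $\bn \in \ZZ_+^s$. To apply \autoref{thm_Minkowski} I next need all the $K_i$ to lie in translates of a single $q$-dimensional linear subspace. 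I would argue this from the fact that the family is linear: letting $V_i$ be the direction space of the affine hull of $K_i$ (so $\dim V_i = q$), linearity gives $\Delta(A)_{\fone} = K_1 + \cdots + K_s$ with $\fone = (1,\ldots,1)$, whose affine hull has direction space $V_1 + \cdots + V_s$; but $\Delta(A)_{\fone}$ is itself $q$-dimensional, forcing $\dim(V_1 + \cdots + V_s) = q = \dim V_i$ and hence $V_1 = \cdots = V_s =: V$. After translating each $K_i$ into $V \cong \RR^q$ — which changes neither the volume of the Minkowski sum nor the displayed identity — \autoref{thm_Minkowski} gives $\Vol_q(n_1 K_1 + \cdots + n_s K_s) = \sum_{|\dd|=q}\frac{1}{\dd!}\,\MV_q\big((K_1,\ldots,K_s)_\dd\big)\,\bn^\dd$, a homogeneous polynomial of degree $q$ with non-negative coefficients, which is the polynomial $P$ required above.

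The step I expect to be the main technicality is not conceptual but a normalization check: one must verify that the integral volume $\Vol_q$ on the fibers of $\Delta(A)$ is the fixed translation-invariant measure on the translates of $V$ to which \autoref{thm_Minkowski} refers — that is, that the lattice normalizing $\Vol_q(\Delta(A)_\bn)$ does not vary with $\bn$, which is already implicit in $\ind(A)$ being a single constant in \autoref{cor-global}. Once this is pinned down, together with the coplanarity of the $K_i$, the two quoted theorems assemble into the proposition, and the coefficients of the resulting polynomial are exactly the mixed volumes of the fiber bodies $K_1,\ldots,K_s$, paralleling the mixed multiplicities of \autoref{thm_C}.
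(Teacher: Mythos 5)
Your proposal is correct and follows essentially the same route as the paper: the paper's proof likewise combines the formula $F_A(x) = \ell_A\cdot\Vol_q(\Delta(A)_x)/\ind(A)$ from \autoref{cor-global} with the identity $\Delta(A)_x = x_1\Delta(A)_{\ee_1}+\cdots+x_s\Delta(A)_{\ee_s}$ supplied by the linearity hypothesis, and then invokes Minkowski's theorem. You simply fill in details (coplanarity of the fiber bodies, normalization of the measure, density of $\QQ_{>0}^s$) that the paper leaves implicit.
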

	\begin{proof}
		Since
		\[
		F_A(x) = \ell_A\cdot \frac{\Vol_q(\Delta(A)_x)}{\ind(A)},
		\]
		the statement of the proposition follows from Minkowski's theorem and the fact that 
		$
			\Delta_{x} = x_1\Delta_{\ee_1}+\cdots+x_s\Delta_{\ee_s}
	   $
	   for all $x = (x_1,\ldots,x_s)\in \RR_{\geq 0}^s$.
	\end{proof}

	Another important case is when the global Newton-Okounkov body is a polyhedral cone. In this case, the multiplicity function is a piecewise polynomial with respect to some fan supported on the positive orthant. 
	
	\begin{proposition}\label{prop_polyhedral}
		Assume that the global Newton-Okounkov body $\Delta(A)$ is a polyhedral cone. Then there exists a fan $\Sigma$ supported on $\RR_{\geq 0}^s$ such that the function $F_A(x)$ is polynomial at each cone of $\Sigma$.
	\end{proposition}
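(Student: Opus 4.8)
The plan is to read off the piecewise polynomiality of $F_A(x) = \ell_A \cdot \Vol_q(\Delta(A)_x)/\ind(A)$ from the \emph{chamber complex} of the linear projection $\pi_2 \colon \Delta(A) \to \RR_{\geq 0}^s$. Since $\Delta(A) = \Con(\Gamma_A)$ is, by hypothesis, a (rational, because generated by the lattice semigroup $\Gamma_A$) polyhedral cone, write it as a finite intersection of closed rational half-spaces $H_1^+,\ldots,H_N^+$ together with the coordinate half-spaces $\{x_j \geq 0\}$. Because $[A]_{\ee_i} \neq 0$ we have $\ee_i \in \pi_2(\Gamma_A)$, so $\pi_2(\Delta(A)) = \RR_{\geq 0}^s$; and for $x$ in the interior of $\RR_{\geq 0}^s$ the fiber $\Delta(A)_x = \Delta(A) \cap \pi_2^{-1}(x)$ is a $q$-dimensional polytope, boundedness and the dimension count coming from \autoref{thm-globalNO}, which identifies $\pi_1(\Delta(A)_\bn) \times \{1\}$ with the genuine ($q$-dimensional, compact) Newton--Okounkov body $\Delta(A^{(\bn)})$ for $\bn \in \ZZ_+^s$, the general fiber then being obtained by the homogeneity $\Delta(A)_{\lambda x} = \lambda\,\Delta(A)_x$ and density.

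First I would construct the fan. Let $\mathcal{F}$ be the set of faces of the cone $\Delta(A)$ and let $\Sigma$ be the coarsest fan, supported on $\pi_2(\Delta(A)) = \RR_{\geq 0}^s$, that refines every projected face $\pi_2(F)$, $F \in \mathcal{F}$; this is the chamber complex of $\pi_2$. Its defining property (the standard, if slightly technical, theory of parametric polytopes and fiber polytopes) is that for each maximal cone $\sigma \in \Sigma$ the entire combinatorial type of $\Delta(A)_x$ — which of the $H_i$ are tight at which vertices, and the whole face lattice — is constant as $x$ ranges over the relative interior $\sigma^\circ$. Hence each vertex of $\Delta(A)_x$ is cut out by a fixed choice of $r$ of the hyperplanes $H_i$ together with the $s$ equations $\pi_2(\,\cdot\,) = x$; solving this linear system shows the vertex map $x \mapsto v_k(x)$ is affine-linear on $\sigma$, and in fact $\RR$-linear by homogeneity.

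Next I would compute the volume. Fix once and for all a triangulation of the fiber into $q$-simplices using only its vertices (a pulling triangulation with respect to a fixed labelling of the vertices), which by the constancy of the face lattice is simultaneously valid for every $x \in \sigma^\circ$. Then
$$
\Vol_q\big(\Delta(A)_x\big) \;=\; \sum_{T} \Vol_q\big(\conv(v_0^T(x),\ldots,v_q^T(x))\big) \;=\; \sum_T \frac{c_T}{q!}\Big|\det\big(v_1^T(x)-v_0^T(x),\ldots,v_q^T(x)-v_0^T(x)\big)\Big|,
$$
the sum over the simplices $T$, where $c_T > 0$ normalizes the integral volume relative to the lattice in the $q$-dimensional span of $\pi_1(\Delta(A)_x)$. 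Each determinant is a polynomial of degree $q$ in $x$, and its sign is constant on the connected set $\sigma^\circ$ since the simplices stay non-degenerate there; so $\Vol_q(\Delta(A)_x)$, hence $F_A(x) = \ell_A\,\Vol_q(\Delta(A)_x)/\ind(A)$, agrees on $\sigma^\circ$ with a homogeneous polynomial of degree $q$, and this extends to all of $\sigma$ — and to the lower-dimensional cones of $\Sigma$ — by the continuity of $F_A$ established in \autoref{cor-global}.

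The two points carrying the real content are: (a) justifying that the chamber complex $\Sigma$ makes the \emph{combinatorial type} of the fiber, not just its support, locally constant; and (b) checking that the lattice normalizing $\Vol_q$ on the affine hull of $\pi_1(\Delta(A)_x)$ does not jump within a maximal cone, so that the constants $c_T$ above are genuinely constant on $\sigma^\circ$. Both follow from polyhedrality together with the rationality of $\Delta(A)$, but they are the technical heart; the remainder is the elementary fact that vertices of a parametric polytope move linearly and that simplex volumes are determinants of linear forms.
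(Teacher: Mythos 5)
Your argument is correct, but it is organized quite differently from the paper's. The paper's proof is two lines: it cites \cite{kaveh-villella}*{Proposition 4.1} for the statement that the fibers $\{\Delta(A)\cap\pi_2^{-1}(x)\}$ of a polyhedral cone under a linear projection form a \emph{piecewise linear} family of polytopes with respect to some fan $\Sigma$ on $\RR_{\geq 0}^s$, and then invokes Minkowski's theorem (\autoref{thm_Minkowski}) to conclude that the volume of a linear family is a homogeneous polynomial of degree $q$ on each cone. What you have written is, in effect, a self-contained proof of the cited Kaveh--Villella lemma (via the chamber complex of $\pi_2$ and the local constancy of the combinatorial type of the fiber), followed by a direct computation of the volume by a simultaneous pulling triangulation and determinants of linear vertex maps, in place of Minkowski's theorem. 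Your route buys independence from the external reference and makes the degree-$q$ homogeneity completely explicit; the paper's route is shorter and isolates the piecewise-linearity of the family as the only geometric input, after which polynomiality is abstract nonsense about Minkowski sums.

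Two small points. First, your parenthetical claim that $\Delta(A)=\Con(\Gamma_A)$ is automatically a \emph{rational} polyhedral cone is not justified: the closure of the cone generated by a set of lattice points can be polyhedral without having rational facets (e.g.\ the closure of the cone over all $(n,m)\in\NN^2$ with $m<n\sqrt{2}$). What is true, and what your normalization constants $c_T$ actually require, is that the \emph{linear span} of $\Gamma_A$, and hence the common direction space of the fibers, is rational; that is enough to fix the integral measure, and the uniformity across fibers is exactly what \autoref{lem_uniform_veronese} (constancy of $\ind(A^{(\bn)})$) provides. Second, since $q$ may be strictly smaller than $r$, the determinant in your volume formula should be understood as computed in a fixed lattice basis of that $q$-dimensional direction space (or as a Gram determinant); as written it is not literally a $q\times q$ determinant. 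Neither point affects the validity of the argument.
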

	
	\begin{proof}
		Indeed, by \cite[Proposition 4.1]{kaveh-villella}, for any polyhedral cone $C\subset \RR^q\times \RR^s$ with $\pi_2(C)=\RR_{\geq 0}^s$, the family $\{C\cap \pi_2^{-1}(x)\}_{x\in C}$ of polytopes 
		is piecewise linear with respect to some fan $\Sigma$ supported on $\RR_{\geq 0}^s$. Therefore, the proposition follows from Minkowski's theorem.
	\end{proof}
	
	\begin{example}
	    \label{examp_funct_min}
		Let $R=\kk[u]$ be a polynomial ring and consider the polynomial ring $R[t_1,t_2]$. 
		Let $f\colon \RR_{\geq 0}^2\to \RR$ be a function defined by
		$
		f(x_1,x_2)=\min(x_1,x_2).
		$
		We define a family of vector spaces:
		\[
		[A]_{(n_1,n_2)} = \left(\bigoplus_{i=0}^{f(n_1,n_2)}\kk u^i \right) t_1^{n_1}t_2^{n_2}\subset \Big[R[t_1,t_2]\Big]_{(n_1,n_2)}
		\]
		The algebra $A=\bigoplus_{(n_1,n_2)\in \NN^2}\subset R[t_1,t_2]$ is naturally an $\NN^2$-graded algebra. The global Newton-Okounkov body of $A$ is a cone $C$ in $\RR^3$ generated by vectors $(1,0,0),(0,1,0),(1,1,1)$ and, as in \autoref{ex-universal-volume},
		$
		F_A(x_1,x_2)=f(x_1,x_2)=\min(x_1,x_2).
		$
	\end{example}

	\section{Multigraded algebras of almost integral type with decomposable grading}
	\label{sect_decomp_grad}
	
	In this section, we introduce and study the notion of mixed multiplicities for certain multigraded algebras of almost integral type. 
	We treat a family of algebras that we call \emph{algebras with decomposable grading}.
	Our approach is inspired by the methods used in \cite{MIXED_MULT_GRAD_FAM, MIXED_VOL_MONOM, cutkosky2019}.

	\begin{definition}
		An $\NN^s$-graded algebra $A$ is said to have a \emph{decomposable grading} if we have the equality
		$$
		[A]_{(n_1,n_2,\ldots,n_s)} = [A]_{n_1\ee_1} \cdot [A]_{n_2\ee_2}  \cdot \, \cdots \, \cdot [A]_{n_s\ee_s}  
		$$
		for all $(n_1,n_2,\ldots,n_s) \in \NN^s$.
	\end{definition}
	
	We now proceed to define the mixed multiplicities of a multigraded algebra $A$ of almost integral type with decomposable grading. 
	Here we extend \autoref{rem_standard_graded}: our approach relies on showing that the corresponding function $F_A(\bn)$ coincides with a polynomial $G_A(\bn)$ when $\bn \in \ZZ_+^s$. 
	We use this polynomial to define the mixed multiplicities of $A$.
	For the rest of this section we use the following setting.
	
	\begin{setup}
		\label{setup_decomp}
		Let $\kk$ be a field and $R$ be a $\kk$-domain.
		Let $A \subset R[t_1,\ldots,t_s]$ be an $\NN^s$-graded $\kk$-algebra of almost integral type with decomposable grading.
		Set $d = \dim(A)$ and $q = d-s$.
		We assume that $[A]_{\ee_i} \neq 0$ for all $1 \le i \le s$.
	\end{setup}

	For each $p \ge 1$, let 
	$$
	\widehat{A}_{[p]} \;:=\; \kk\left[ [A]_{p\ee_1},\ldots, [A]_{p\ee_s} \right] \subset A
	$$ 
	be the $\NN^s$-graded algebra generated by $[A]_{p\ee_1},\ldots,[A]_{p\ee_s}$, and denote by $\widetilde{A}_{[p]} := \bigoplus_{\bn \in \NN^s} \left[\widehat{A}_{[p]}\right]_{p\bn}$ the standard $\NN^s$-graded algebra obtained by regrading $\widehat{A}_{[p]}$.
	For $p \gg 0$, \autoref{thm_dim_subfinite_alg} and the fact that $A$ has a decomposable grading imply that $\dim\big(\widetilde{A}_{[p]}\big) = d$. 
	Then, by \autoref{rem_standard_graded} the function $F_{\widetilde{A}_{[p]}}(\bn)$ coincides with the homogeneous polynomial 
	\begin{equation}
		\label{eq_poly_A_p}
		G_{\widetilde{A}_{[p]}}(\bn) = \sum_{|\dd| = q} \, \frac{1}{\dd !} e\big(\dd; \widetilde{A}_{[p]}\big) \, \bn^\dd
	\end{equation}
	for all $\bn \in \ZZ_+^{s}$, where $q = d -s$ and $e\big(\dd; \widetilde{A}_{[p]}\big)$ denotes the mixed multiplicity of $\widetilde{A}_{[p]}$ of type $\dd \in \NN^s$.

	For each $a \ge 1$, we consider the \emph{$a$-truncation} $G^{[a]}(A) := \kk\left[\cup_{i=1}^s\cup_{j=1}^a [A]_{j\ee_i}\right] \subset A$ of $A$ which is the subalgebra generated by the graded components $[A]_{j\ee_i}$ with $1 \le i \le s, 1 \le j \le a$.
	The next proposition says that the $a$-truncations can be used to approximate $F_A(\bn)$.
	
	\begin{proposition}
		\label{prop_tuncation}
		Assume \autoref{setup_decomp}.
		For each $\bn = (n_1,\ldots,n_s) \in \ZZ_+^s$, we have the equality
		$$
		F_A(\bn) \; = \; \lim_{a \to \infty} F_{G^{[a]}(A)}(\bn).
		$$
	\end{proposition}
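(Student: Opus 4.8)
The plan is to prove the two inequalities $\lim_{a\to\infty}F_{G^{[a]}(A)}(\bn)\le F_A(\bn)$ and $\lim_{a\to\infty}F_{G^{[a]}(A)}(\bn)\ge F_A(\bn)$: the first by an elementary monotonicity argument, the second by feeding the singly graded Fujita-type approximation already contained in \autoref{thm_limit_graded_domain} to the truncated subalgebras.

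First I would set up the truncations. Each $G^{[a]}(A)$ lies between $\kk$ and $A\subset B$, so it is again an $\NN^s$-graded $\kk$-algebra of almost integral type, it is a domain, and $[G^{[a]}(A)]_{\ee_i}=[A]_{\ee_i}\neq 0$; hence $m\big(G^{[a]}(A)^{(\bn)}\big)=1$ and $F_{G^{[a]}(A)}(\bn)=\lim_{n\to\infty}\dim_\kk\big([G^{[a]}(A)]_{n\bn}\big)/n^{q_a}$ is well defined exactly as for $A$, where $q_a=\dim(G^{[a]}(A))-s$. Because $A$ has a decomposable grading, $A=\kk\big[\bigcup_{i,j}[A]_{j\ee_i}\big]=\bigcup_{a\ge 1}G^{[a]}(A)$, and consequently $A_{(0)}=\bigcup_a G^{[a]}(A)_{(0)}$. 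Running the identity $\dim(G^{[a]}(A))=s+\trdeg_\kk\big(G^{[a]}(A)_{(0)}\big)$ (valid for each truncation just as for $A$, using $[G^{[a]}(A)]_{\ee_i}\neq 0$ and \autoref{thm_dim_subfinite_alg}) together with $\trdeg_\kk(A_{(0)})=q<\infty$, I get $q_a=q$, i.e. $\dim(G^{[a]}(A))=d$, for all $a\gg 0$; from here on I restrict to such $a$, so that all the functions $F_{G^{[a]}(A)}(\bn)$ are normalized by the same power $n^q$ as $F_A(\bn)$.

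For the upper bound, the inclusions $G^{[a]}(A)\subseteq G^{[a']}(A)\subseteq A$ for $a\le a'$ give $\dim_\kk\big([G^{[a]}(A)]_{n\bn}\big)\le\dim_\kk\big([G^{[a']}(A)]_{n\bn}\big)\le\dim_\kk\big([A]_{n\bn}\big)$; dividing by $n^q$ and letting $n\to\infty$ shows that $\big(F_{G^{[a]}(A)}(\bn)\big)_{a\gg 0}$ is non-decreasing and bounded above by $F_A(\bn)$, so $L:=\lim_{a\to\infty}F_{G^{[a]}(A)}(\bn)$ exists and $L\le F_A(\bn)$. For the lower bound I may assume $F_A(\bn)>0$ and fix $\varepsilon>0$. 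Set $C:=A^{(\bn)}$, a singly graded algebra of almost integral type with $\dim(C)=q+1$ and $m(C)=1$, and for $p\ge 1$ let $\widehat{C}_p=\kk\big[[A]_{p\bn}\big]\subseteq C$, so that $[\widehat{C}_p]_{np}=n\star[A]_{p\bn}$. Applying \autoref{thm_limit_graded_domain} to $C$ (more precisely, the chain of equalities in its proof) gives
$$
F_A(\bn)\;=\;\lim_{n\to\infty}\frac{\dim_\kk\big([C]_n\big)}{n^q}\;=\;\lim_{p\to\infty}\frac{1}{p^q}\left(\lim_{n\to\infty}\frac{\dim_\kk\big(n\star[A]_{p\bn}\big)}{n^q}\right),
$$
so I may fix $p$ with $\tfrac{1}{p^q}\lim_{n}\dim_\kk\big(n\star[A]_{p\bn}\big)/n^q>F_A(\bn)-\varepsilon$, and then choose any $a\ge p\max_i n_i$. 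Decomposability gives $[A]_{p\bn}=[A]_{pn_1\ee_1}\cdots[A]_{pn_s\ee_s}$, and since $pn_i\le a$ each factor is one of the generating components of $G^{[a]}(A)$; hence $[A]_{p\bn}\subseteq G^{[a]}(A)$, so $\widehat{C}_p\subseteq G^{[a]}(A)$ and $n\star[A]_{p\bn}\subseteq[G^{[a]}(A)]_{np\bn}$ for every $n$. Computing the (existing) limit $F_{G^{[a]}(A)}(\bn)$ along the subsequence $n\mapsto np$,
$$
F_{G^{[a]}(A)}(\bn)\;=\;\lim_{n\to\infty}\frac{\dim_\kk\big([G^{[a]}(A)]_{np\bn}\big)}{(np)^q}\;\ge\;\frac{1}{p^q}\lim_{n\to\infty}\frac{\dim_\kk\big(n\star[A]_{p\bn}\big)}{n^q}\;>\;F_A(\bn)-\varepsilon .
$$
Letting $a\to\infty$ yields $L\ge F_A(\bn)-\varepsilon$ for all $\varepsilon>0$, hence $L\ge F_A(\bn)$, and combining with the upper bound, $L=F_A(\bn)$.

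I expect the only real friction to be bookkeeping rather than a genuine obstacle: checking that for $a\gg 0$ the truncation $G^{[a]}(A)$ has full Krull dimension $d$ (so the normalizing power is the correct $n^q$) and that $m\big(G^{[a]}(A)^{(\bn)}\big)=1$ (so the full limit defining $F_{G^{[a]}(A)}(\bn)$ exists and can be read off along $n\mapsto np$), together with extracting from the proof of \autoref{thm_limit_graded_domain} the approximation of $F_A(\bn)$ by the singly graded quantities $p^{-q}\lim_n\dim_\kk\big(n\star[A]_{p\bn}\big)/n^q$ attached to $A^{(\bn)}$ — which is precisely the mechanism made available by decomposability, since then $[A]_{p\bn}$ is a product of generators of $G^{[a]}(A)$ as soon as $a$ is large.
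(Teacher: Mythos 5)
Your proof is correct and is essentially the paper's argument in a slightly different packaging: the paper passes to the semigroups $\Gamma^{(t)}_{C}$ and $\Gamma^{(t)}_{D^a}$ (for $C=A^{(\bn)}$, $D^a=(G^{[a]}(A))^{(\bn)}$), sandwiches $n\star\big[\Gamma^{(t)}_C\big]_{a'}\subset\big[\Gamma^{(t)}_{D^a}\big]_{na'}\subset\big[\Gamma^{(t)}_C\big]_{na'}$ with $a'=\lfloor a/\max_i n_i\rfloor$, and applies \autoref{thm_approx}, whereas you obtain the two inequalities separately via monotonicity and via the Fujita-type equality already established in the proof of \autoref{thm_limit_graded_domain}. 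Both versions hinge on the identical consequence of decomposability, namely $[A]_{p\bn}\subseteq G^{[a]}(A)$ once $a\ge p\max_i n_i$, so the underlying mechanism is the same.
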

	\begin{proof}
		Fix $\bn = (n_1,\ldots,n_s) \in \ZZ_+^s$.
		To simplify notation, set $C = A^{(\bn)}$ and $D^{a} = \left(G^{[a]}(A)\right)^{(\bn)}$.
		Note that $F_A(\bn) = \lim_{n \to \infty} \frac{\dim_{\kk}([C]_n)}{n^q}$ and $F_{G^{[a]}(A)}(\bn) = \lim_{n \to \infty} \frac{\dim_{\kk}([D^{a}]_n)}{n^q}$.
		By following the same steps as in \autoref{subsect_R_domain}, we can define strongly non-negative semigroups 
		$$
		\Gamma_C^{(t)} \,\subset\, \NN^{r+1} \qquad \text{ and } \qquad \Gamma_{D^{a}}^{(t)} \,\subset\, \NN^{r+1},
		$$
		such that $\dim_{\kk}\left([C]_n\right) = \sum_{t=1}^{\ell_C} \#\left[\Gamma^{(t)}_C\right]_n$ and $\dim_{\kk}\left([D^{a}]_n\right) = \sum_{t=1}^{\ell_{D^a}} \#\left[\Gamma^{(t)}_{D^a}\right]_n$.
		Consider the integer $a' = \lfloor a / \max\{n_1,\ldots,n_s\} \rfloor$ (where $\lfloor \beta \rfloor$ denotes the floor function of $\beta \in \RR$) and the inclusions 
		$$
		n \star \left[\Gamma^{(t)}_C\right]_{a'} \; \subset \; \left[\Gamma_{D^a}^{(t)}\right]_{na'} \; \subset \; \left[\Gamma^{(t)}_C\right]_{na'}.
		$$
		Then, \autoref{thm_approx} implies that 
		$$
		\lim_{a \to \infty}\left( \lim_{n\to \infty} \frac{\#\big[\Gamma_{D^a}^{(t)}\big]_{n}}{n^q} \right) = \lim_{n\to \infty} \frac{\#\big[\Gamma_{C}^{(t)}\big]_{n}}{n^q},
		$$
		and so the result follows.
	\end{proof}

	The following proposition deals with the case when $A$ is also a finitely generated $\kk$-algebra.
	
	\begin{proposition}
		\label{prop_subalg_p}
		Assume \autoref{setup_decomp} with $A$ being finitely generated over $\kk$.
		For each $\bn = (n_1,\ldots,n_s) \in \ZZ_+^s$, we have the equality
		$$
		F_A(\bn) \; = \; \lim_{p \to \infty} \frac{F_{\widetilde{A}_{[p]}}(\bn)}{p^q}.
		$$
	\end{proposition}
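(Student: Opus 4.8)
The plan is to reduce everything to the singly graded Veronese subalgebra $C:=A^{(\bn)}$, to which the results of \autoref{subsect_R_domain} apply, and then to upgrade the resulting asymptotics from a cofinal set of $p$'s to the full limit by elementary monotonicity. Fix $\bn=(n_1,\dots,n_s)\in\ZZ_+^s$ and set
$$
\Phi(p)\;:=\;\lim_{m\to\infty}\frac{\dim_\kk\big([\widehat A_{[p]}]_{mp\bn}\big)}{m^q};
$$
this limit exists because the regrading of $(\widehat A_{[p]})^{(p\bn)}$ is a finitely generated standard singly graded $\kk$-domain, and for $p\gg0$ one has $\dim(\widetilde A_{[p]})=d$, so $\Phi(p)=F_{\widetilde A_{[p]}}(\bn)$ there; it therefore suffices to prove $\lim_{p\to\infty}\Phi(p)/p^q=F_A(\bn)$. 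The decomposable grading gives $[\widehat A_{[p]}]_{mp\bn}=[A]_{p\ee_1}^{mn_1}\cdots[A]_{p\ee_s}^{mn_s}$ (products of $\kk$-subspaces of $R[t_1,\dots,t_s]$), and the inclusion $\widehat A_{[p]}\subset A$ already yields the crude bound $\Phi(p)\le p^q F_A(\bn)$ for every $p$.

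Next I would compare $\Phi$ with $\widehat C_p:=\kk[[C]_p]=\kk[[A]_{p\bn}]\subset C$, whose regrading $\widetilde C_p$ is a finitely generated standard singly graded $\kk$-domain with $\dim\widetilde C_p=\dim C=q+1$ for $p\gg0$ (by \autoref{thm_dim_subfinite_alg}), so that $\dim_\kk\big([A]_{p\bn}\big)^k/k^q\to e(\widetilde C_p)/q!$ as $k\to\infty$. Put $\Lambda:=\mathrm{lcm}(n_1,\dots,n_s)$. Since $[A]_{p\Lambda\ee_i}\supseteq[A]_{pn_i\ee_i}^{\Lambda/n_i}$, decomposability gives $[\widehat A_{[p\Lambda]}]_{mp\Lambda\bn}\supseteq\big([A]_{p\bn}\big)^{m\Lambda}$, hence $\Phi(p\Lambda)\ge\Lambda^q\,e(\widetilde C_p)/q!$ for $p\gg0$. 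Now $C=A^{(\bn)}$ is of almost integral type with $m(C)=1$ and $\dim C=q+1$, so \autoref{thm_limit_graded_domain} gives $e(\widetilde C_p)/(q!\,p^q)\to F_A(\bn)$ as $p\to\infty$. Combining this with the crude bound shows $\Phi(p\Lambda)/(p\Lambda)^q\to F_A(\bn)$, and hence $\sup_{p\ge1}\Phi(p)/p^q=F_A(\bn)$, with the limit of $\Phi(p)/p^q$ along the cofinal set $\Lambda\ZZ_+$ equal to $F_A(\bn)$.

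It remains to promote this to $\lim_{p\to\infty}\Phi(p)/p^q=F_A(\bn)$. For this I would record two monotonicity properties of $\Phi$ coming from the same containments: first, $[A]_{kp\ee_i}\supseteq[A]_{p\ee_i}^k$ gives $[\widehat A_{[kp]}]_{mkp\bn}\supseteq[\widehat A_{[p]}]_{(km)p\bn}$, hence $\Phi(kp)\ge k^q\Phi(p)$; second, $[A]_{(p+p')\ee_i}\supseteq[A]_{p\ee_i}[A]_{p'\ee_i}$ gives $[\widehat A_{[p+p']}]_{m(p+p')\bn}\supseteq[\widehat A_{[p]}]_{mp\bn}\cdot[\widehat A_{[p']}]_{mp'\bn}$, and the standard inequality $\dim_\kk(UV)\ge\dim_\kk U+\dim_\kk V-1$ for nonzero finite-dimensional subspaces $U,V$ of a domain (applied in $R[t_1,\dots,t_s]$, then divided by $m^q$ with $q\ge1$) yields superadditivity $\Phi(p+p')\ge\Phi(p)+\Phi(p')$. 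The case $q=0$, where $\Phi$ takes positive integer values, is handled directly by the same containments. Granting these: given $\varepsilon>0$ pick $p_1$ with $\Phi(p_1)/p_1^q>F_A(\bn)-\varepsilon$ (possible since the sup is $F_A(\bn)>0$); for large $p$ write $p=kp_1+r$ with $1\le r\le p_1$, so $\Phi(p)\ge\Phi(kp_1)\ge k^q\Phi(p_1)$, whence $\Phi(p)/p^q\ge(kp_1/p)^q\,\Phi(p_1)/p_1^q\ge(1-p_1/p)^q(F_A(\bn)-\varepsilon)\to F_A(\bn)-\varepsilon$. Thus $\liminf_{p\to\infty}\Phi(p)/p^q\ge F_A(\bn)$, which with the crude upper bound gives the claim.

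The main obstacle is the comparison between $(\widehat A_{[p]})^{(p\bn)}$ and the singly graded Fujita approximant $\widehat C_p=\kk[[A^{(\bn)}]_p]$: these agree only after rescaling by $\Lambda$ (equivalently, $(A^{(\ee_i)})^{(p)}$ becomes standard only along a sublattice of $p$), so the clean single-graded statement of \autoref{thm_limit_graded_domain} is applicable only along a cofinal subset of $p$, and the genuine limit over all $p$ must be recovered through the superadditivity and multiplicativity of $\Phi$.
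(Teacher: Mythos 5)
Your proof is correct, but it takes a genuinely different route from the paper's. The paper introduces the single auxiliary singly graded algebra
$B=\bigoplus_n [A]_{n\ee_1}^{n_1}\cdots [A]_{n\ee_s}^{n_s}$,
observes that $\big([B]_p\big)^n=\big[\widetilde{A}_{[p]}\big]_{n\bn}\subset [B]_{np}$, applies the semigroup approximation theorem (\autoref{thm_approx}) directly to the semigroup chain this induces to get $\lim_p F_{\widetilde{A}_{[p]}}(\bn)/p^q=\lim_n \dim_\kk[B]_n/n^q$, and then uses that $A^{(\ee_i)}$ is a finitely generated standard-graded algebra after a Veronese rescaling to identify the right-hand side with $F_A(\bn)$. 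You instead go through the singly graded Veronese $C=A^{(\bn)}$ and its Fujita approximants $\widehat C_p=\kk\big[[C]_p\big]$, invoking \autoref{thm_limit_graded_domain} (the paper's ``Volume = Multiplicity'' statement for singly graded algebras) rather than \autoref{thm_approx}. Because $(\widehat A_{[p]})^{(p\bn)}$ and $\widehat C_p$ do not coincide — only the inclusion $[\widehat A_{[p]}]_{p\bn}=[A]_{p\ee_1}^{n_1}\cdots[A]_{p\ee_s}^{n_s}\subseteq[A]_{p\bn}$ holds — you have to reconcile them at $p\Lambda$ with $\Lambda=\mathrm{lcm}(n_i)$, which pins the limit only along the cofinal set $\Lambda\ZZ_+$, and then you lift to the full limit by the elementary supermultiplicativity/monotonicity of $\Phi$. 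This is more work than the paper's route (whose algebra $B$ interpolates exactly), but it is self-contained and makes the elementary structure of the estimate visible. Both arguments are valid.

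One technical point needs correcting: the inequality you call ``standard,'' $\dim_\kk(UV)\ge\dim_\kk U+\dim_\kk V-1$ for nonzero finite-dimensional $\kk$-subspaces of a $\kk$-domain, is \emph{not} true in this generality. It fails already for $U=V=\kk'$ inside $R=\kk'[x]$ when $\kk'/\kk$ is a finite extension of degree $>1$; the linear Kneser-type theorem that gives this bound requires $\kk$ to be relatively algebraically closed in $\Quot(R)$ (or a separability-type hypothesis), and \autoref{setup_decomp} imposes no such condition on $R$. Fortunately your argument does not actually need superadditivity of $\Phi$: in the step ``write $p=kp_1+r$, so $\Phi(p)\ge\Phi(kp_1)$'' the inclusion $[\widehat A_{[p]}]_{mp\bn}\supseteq[\widehat A_{[kp_1]}]_{mkp_1\bn}\cdot[\widehat A_{[r]}]_{mr\bn}$ together with the trivial (and always valid) bound $\dim_\kk(UV)\ge\max(\dim_\kk U,\dim_\kk V)$ for nonzero $U,V$ in a domain already gives $\Phi(p)\ge\Phi(kp_1)$, after which the $k^q$-supermultiplicativity and the $\Lambda\ZZ_+$ limit finish the argument exactly as you wrote. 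So the proposal is sound once this over-claim is replaced by the weaker, correct bound.
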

	\begin{proof}
		Fix $\bn = (n_1,\ldots,n_s) \in \ZZ_+^s$.
		We define the graded $\kk$-algebra
		$$
		B \, :=\, \bigoplus_{n = 0}^\infty [A]_{n\ee_1}^{n_1} \cdot [A]_{n\ee_2}^{n_2} \cdot \, \cdots \, \cdot [A]_{n\ee_s}^{n_s}.
		$$
		For each $p \ge 1$, let $C^p$ be the graded $\kk$-algebra 
		$$
		C^p \,:=\, \Big(\widetilde{A}_{[p]}\Big)^{(\bn)} \,=\, \bigoplus_{n =0}^\infty \left[\widehat{A}_{[p]}\right]_{np\bn} = \bigoplus_{n=0}^\infty [A]_{p\ee_1}^{nn_1} \cdot [A]_{p\ee_2}^{nn_2} \cdot\, \cdots\, \cdot [A]_{p\ee_s}^{nn_s}.
		$$ 
		Once again, we can define strongly non-negative semigroups $\Gamma_{B}^{(t)} \subset \NN^{r+1}$ and $\Gamma_{C^p}^{(t)} \subset \NN^{r+1}$ such that $\dim_{\kk}\left([B]_n\right) = \sum_{t=1}^{\ell_B} \#\left[\Gamma^{(t)}_B\right]_n$ and $\dim_{\kk}\left([C^{p}]_n\right) = \sum_{t=1}^{\ell_{C^p}} \#\left[\Gamma^{(t)}_{C^p}\right]_n$.
		
		Since $\left([B]_p\right)^n = \left[C^p\right]_n \subset [B]_{np}$, we obtain the corresponding inclusions
		$$
		n \star \left[\Gamma^{(t)}_B\right]_{p} \; \subset \; \left[\Gamma_{C^p}^{(t)}\right]_{n} \; \subset \; \left[\Gamma^{(t)}_B\right]_{np}.
		$$
		By \autoref{thm_approx}, it follows that 
		$$
		\lim_{p \to \infty}\left( \frac{1}{p^q}\lim_{n\to \infty} \frac{\#\big[\Gamma_{C^p}^{(t)}\big]_{n}}{n^q} \right) = \lim_{n\to \infty} \frac{\#\big[\Gamma_{B}^{(t)}\big]_{n}}{n^q}.
		$$
		As a consequence, we get $\lim_{p \to \infty} \frac{F_{\widetilde{A}_{[p]}}(\bn)}{p^q} = \lim_{n\to \infty} \frac{\dim_{\kk}\big([B]_{n}\big)}{n^q}$.
		To finish the proof, it remains to show the equality $F_A(\bn) = \lim_{n\to \infty} \frac{\dim_{\kk}\big([B]_{n}\big)}{n^q}$.
		
		As $A$ has a decomposable grading, the algebras $A^{(\ee_1)},\ldots,A^{(\ee_s)}$ are also finitely generated over $\kk$.
		Hence, by \cite[Lemma 13.10]{GORTZ_WEDHORN} we can choose $h > 0$ such that 
		$$
		[A]_{nh\bn} = [A]_{nn_1h\ee_1} \cdot [A]_{nn_2h\ee_2} \cdot \cdots \cdot [A]_{nn_sh\ee_s} = [A]_{nh\ee_1}^{n_1} \cdot [A]_{nh\ee_2}^{n_2} \cdot \, \cdots \, \cdot [A]_{nh\ee_s}^{n_s} = [B]_{nh}
		$$
		for all $n \ge 0$.
		We then obtain
		$$
		F_A(\bn)  = \lim_{n\to \infty} \frac{\dim_{\kk}\big([A]_{nh\bn}\big)}{n^qh^q} = \lim_{n\to \infty} \frac{\dim_{\kk}\big([B]_{nh}\big)}{n^qh^q}  = \lim_{n\to \infty} \frac{\dim_{\kk}\big([B]_{n}\big)}{n^q}.
		$$
		So, the proof of the proposition is complete.
	\end{proof}
	
	The next theorem contains the main result of this section.
	It shows that $F_A(\bn)$ is a polynomial like function when $A$ has a decomposable grading. 
	
	\begin{theorem}
		\label{thm_poly_decomp}
		Assume \autoref{setup_decomp}.
		Then, there exists a homogeneous polynomial $G_A(\bn) \in \RR[n_1,\ldots,n_s]$ of degree $q$ with non-negative real coefficients such that 
		$$
		F_A(\bn) \; = \; G_A(\bn) \quad \text{ for all } \quad  \bn \in \ZZ_+^s.
		$$
		Additionally, we have 
		$$
		G_A(\bn)  \; = \; \lim_{p \to \infty} \frac{G_{\widetilde{A}_{[p]}}(\bn)}{p^q} \; = \; \sup_{p \in \ZZ_+} \frac{G_{\widetilde{A}_{[p]}}(\bn)}{p^q} \quad \text{ for all } \quad  \bn \in \ZZ_+^s.
		$$
	\end{theorem}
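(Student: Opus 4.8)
The plan is to combine the two approximation results already established, namely the truncation approximation \autoref{prop_tuncation} and the Fujita-type approximation \autoref{prop_subalg_p}, to prove the identity
\[
F_A(\bn) \;=\; \lim_{p\to\infty}\frac{G_{\widetilde{A}_{[p]}}(\bn)}{p^q}
\qquad\text{for all } \bn\in\ZZ_+^s,
\]
and then to deduce that $F_A$ is a polynomial from the fact that homogeneous polynomials of degree $q$ in $n_1,\dots,n_s$ form a finite-dimensional space of functions on $\ZZ_+^s$. Recall that each $\widetilde{A}_{[p]}$ is a standard $\NN^s$-graded domain, so by \autoref{rem_standard_graded} the function $F_{\widetilde{A}_{[p]}}$ agrees on $\ZZ_+^s$ with the homogeneous polynomial $G_{\widetilde{A}_{[p]}}(\bn)=\sum_{|\dd|=q}\frac{1}{\dd!}\,e(\dd;\widetilde{A}_{[p]})\,\bn^\dd$ of \autoref{eq_poly_A_p}, whose coefficients are non-negative once $p$ is large enough that $\dim(\widetilde{A}_{[p]})=d$.

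For $A$ finitely generated over $\kk$, the displayed identity is exactly \autoref{prop_subalg_p}, since $F_{\widetilde{A}_{[p]}}(\bn)=G_{\widetilde{A}_{[p]}}(\bn)$ on $\ZZ_+^s$. For a general $A$ I would pass through the $a$-truncations $G^{[a]}(A)$ of \autoref{prop_tuncation}; these are finitely generated but need not be decomposable, so I would replace each by its ``decomposable closure''
\[
A^{[a]} \;:=\; \bigoplus_{\bn\in\NN^s}[G^{[a]}(A)]_{n_1\ee_1}\cdot[G^{[a]}(A)]_{n_2\ee_2}\cdots[G^{[a]}(A)]_{n_s\ee_s}\;\subset\;A,
\]
which one checks is a finitely generated $\NN^s$-graded $\kk$-algebra of almost integral type with decomposable grading and $[A^{[a]}]_{\ee_i}=[A]_{\ee_i}\neq 0$. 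Since $G^{[a]}(A)\subseteq A^{[a]}\subseteq A$, \autoref{prop_tuncation} gives $\lim_{a\to\infty}F_{A^{[a]}}(\bn)=F_A(\bn)$ by squeezing; and since $\dim(A)=\delta_\kk(A)$ by \autoref{thm_dim_subfinite_alg} while finitely many algebraically independent elements of $A$ already lie in some $G^{[a_0]}(A)$, we get $\dim(A^{[a]})=d$ for $a\gg 0$, so \autoref{prop_subalg_p} applies to each such $A^{[a]}$. Combining this with the inclusion $\widetilde{A^{[a]}}_{[p]}\subseteq\widetilde{A}_{[p]}$, hence $F_{\widetilde{A^{[a]}}_{[p]}}(\bn)\le F_{\widetilde{A}_{[p]}}(\bn)$, and passing to $\liminf_p$ and then $a\to\infty$, yields $\liminf_{p\to\infty}\frac{F_{\widetilde{A}_{[p]}}(\bn)}{p^q}\ge F_A(\bn)$. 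For the reverse inequality, $\widehat{A}_{[p]}\subseteq A$ gives $\dim_\kk[\widetilde{A}_{[p]}]_{n\bn}=\dim_\kk[\widehat{A}_{[p]}]_{np\bn}\le\dim_\kk[A]_{np\bn}$, so dividing by $n^q$, letting $n\to\infty$, and using the scaling relation $F_A(p\bn)=p^qF_A(\bn)$ that follows directly from \autoref{eq_function_A}, we obtain $F_{\widetilde{A}_{[p]}}(\bn)\le p^qF_A(\bn)$. This proves the displayed identity, and at the same time the inequality $\frac{G_{\widetilde{A}_{[p]}}(\bn)}{p^q}\le F_A(\bn)$ for every $p$; combined with the identity this shows that the limit is also the supremum, which will give the claimed $\sup$-formula.

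It remains to upgrade the displayed identity to polynomiality. The space $V$ of homogeneous polynomials of degree $q$ in $n_1,\dots,n_s$ has dimension $\binom{q+s-1}{s-1}$, and the restriction map $V\to\RR^{\ZZ_+^s}$ is injective because a homogeneous polynomial vanishing at every point of $\ZZ_+^s$ is the zero polynomial; hence its image is a finite-dimensional, and therefore closed, subspace of $\RR^{\ZZ_+^s}$ in the topology of pointwise convergence. For $p\gg 0$ the functions $\bn\mapsto\frac{G_{\widetilde{A}_{[p]}}(\bn)}{p^q}$ lie in this image and, by the displayed identity, converge pointwise on $\ZZ_+^s$ to $F_A$; therefore $F_A$ restricted to $\ZZ_+^s$ is the restriction of a unique homogeneous polynomial $G_A\in\RR[n_1,\dots,n_s]$ of degree $q$. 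Since convergence in the finite-dimensional space $V$ is coefficient-wise, each coefficient $e(\dd;\widetilde{A}_{[p]})/(\dd!\,p^q)\ge 0$ of $G_{\widetilde{A}_{[p]}}/p^q$ converges to the corresponding coefficient of $G_A$, which is thus a non-negative real number; and the two limit/supremum formulas for $G_A$ follow from what was shown above.

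The step I expect to be the main obstacle is the non-Noetherian instance of the displayed identity: one has to coordinate two separate approximation procedures --- first exhausting $A$ by finitely generated \emph{decomposable} subalgebras (which is precisely why the naive truncations $G^{[a]}(A)$ have to be enlarged to the $A^{[a]}$), and then, inside each of these, running the Fujita-type approximation of \autoref{prop_subalg_p} --- while keeping track of the chain of inclusions $\widetilde{A^{[a]}}_{[p]}\subseteq\widetilde{A}_{[p]}\subseteq A$ that allows one to sandwich all the limits. The remaining ingredients, namely the elementary monotonicity bound and the finite-dimensionality argument, are routine.
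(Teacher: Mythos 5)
Your proof is correct and follows essentially the same strategy as the paper: establish $F_A(\bn)=\lim_p G_{\widetilde{A}_{[p]}}(\bn)/p^q=\sup_p G_{\widetilde{A}_{[p]}}(\bn)/p^q$ by sandwiching via \autoref{prop_tuncation} and \autoref{prop_subalg_p} together with the monotonicity $F_{\widetilde{A}_{[p]}}(\bn)\le p^q F_A(\bn)$, then upgrade to polynomiality with non-negative coefficients by a coefficient-wise limit argument (the paper cites Cutkosky--Sarkar--Srinivasan, Lemma 3.2, for what you prove directly via the finite-dimensional closed-subspace argument). One remark: the ``decomposable closure'' $A^{[a]}$ you introduce is superfluous --- the $a$-truncation $G^{[a]}(A)=\kk\bigl[\bigcup_{i=1}^s\bigcup_{j=1}^a [A]_{j\ee_i}\bigr]$ is \emph{automatically} decomposable, regardless of whether $A$ is, because any monomial in the generators $[A]_{j\ee_i}$ of total multidegree $\bn$ can be regrouped (by commutativity) into a product $g_1\cdots g_s$ with $g_i\in[G^{[a]}(A)]_{n_i\ee_i}$; thus $[G^{[a]}(A)]_\bn=\prod_i[G^{[a]}(A)]_{n_i\ee_i}$ and in fact $A^{[a]}=G^{[a]}(A)$. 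So the paper's direct application of \autoref{prop_subalg_p} to $B^a=G^{[a]}(A)$ is legitimate, and your extra construction is harmless but redundant. Your careful use of $\liminf$/$\limsup$ before the limit is known to exist is a slight improvement in rigor over the paper's abbreviated inequality chain.
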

	\begin{proof}
		Fix $\bn = (n_1,\ldots,n_s) \in \ZZ_+^s$.
		For any $a \ge 1$ and $p \ge 1$, we have the following inequalities
		$$
		\lim_{p \to \infty} \frac{F_{\widetilde{B^a}_{[p]}}(\bn)}{p^q} 
		\; \le \; \lim_{p \to \infty} \frac{F_{\widetilde{A}_{[p]}}(\bn)}{p^q} 
		\; \le \; F_A(\bn)
		$$
		where $B^a = G^{[a]}(A)$.
		Therefore, by combining \autoref{prop_tuncation} and \autoref{prop_subalg_p}, we obtain the equalities 
		$$
		F_A(\bn) \; = \; \lim_{p \to \infty} \frac{F_{\widetilde{A}_{[p]}}(\bn)}{p^q} \; = \; \sup_{p \in \ZZ_+} \frac{F_{\widetilde{A}_{[p]}}(\bn)}{p^q}.
		$$
		From \autoref{rem_standard_graded}, the function  $F_{\widetilde{A}_{[p]}}(\bn)$	coincides with the polynomial $G_{\widetilde{A}_{[p]}}(\bn) = \sum_{|\dd| = q} \frac{1}{\dd!}e\big(\dd; \widetilde{A}_{[p]}\big)\bn^\dd$ in \autoref{eq_poly_A_p} for all $\bn \in \ZZ_+^s$.
		It then necessarily follows that, for all $\bn \in \ZZ_+^s$, $F_A(\bn) = G_A(\bn)$ where $G_A(\bn) \in \RR[n_1,\ldots,n_s]$ is the polynomial 
		\begin{align}
			\label{eq_poly_G_A}
			\begin{split}
				G_A(\bn)  &\; = \; \lim_{p \to \infty} \frac{G_{\widetilde{A}_{[p]}}(\bn)}{p^q} \;=\; \sum_{|\dd| = q} \left(\lim_{p \to \infty}  \frac{e\big(\dd; \widetilde{A}_{[p]}\big)}{p^q}\right) \, \frac{\bn^\dd}{\dd!}\\
				&\; = \; \sup_{p \in \ZZ_+} \frac{G_{\widetilde{A}_{[p]}}(\bn)}{p^q} \;=\; \sum_{|\dd| = q} \left(\sup_{p \in \ZZ_+}  \frac{e\big(\dd; \widetilde{A}_{[p]}\big)}{p^q}\right) \, \frac{\bn^\dd}{\dd!}
			\end{split}
		\end{align}
		(see, e.g.,~\cite[Lemma 3.2]{cutkosky2019}).
		So, the result of the theorem follows.
	\end{proof}
	
	
	Below we have an example of a family of algebras with decomposable grading.
	
	\begin{example}
	    \label{examp_cox_ring}
		Let $G$ be a complex semisimple group and let $B\subset G$ be a Borel subgroup with a character lattice $M$. Denote by $A$ the Cox ring of $G/B$, i.e.
		\[
		A = \bigoplus_{L\in \Pic(G/B)} \HH^0(G/B, L),
		\]
		with a product induced by the natural maps:
		\[
		\HH^0(G/B, L_1)\otimes \HH^0(G/B, L_2) \to \HH^0(G/B, L_1\otimes L_2).
		\]
		The Cox ring $A$ is naturally an algebra graded by the Picard group $\Pic(G/B)$ of the flag variety $G/B$. By Borel's theorem, $\Pic(G/B)$ is isomorphic to $M$, for a character $\lambda\in M$, we will denote by $L_\lambda\in \Pic(G/B)$ the corresponding line bundle. Moreover, by Borel-Weil theorem one has
		\[
		\HH^0(G/B, L_\lambda) = \begin{cases} V_\lambda \quad \text{ if } \lambda \text{ is a dominant weight}\\
			0 \quad \;\;\;\;\text{otherwise.}
		\end{cases}
		\]
		Therefore, the Cox ring $A$ of $G/B$ is given by
		\[
		A=\bigoplus_{\lambda\in \Lambda^+} V_\lambda,
		\]
		where $\Lambda^+$ is the positive Weyl chamber, i.e.,~the direct sum is over dominant weights. 
		
		The product maps $[A]_\lambda\otimes [A]_\mu\to [A]_{\lambda+\mu}$ can be described in the following way. The tensor product $[A]_\lambda\otimes [A]_\mu = V_\lambda\otimes V_\mu$ decomposes into a direct sum of irreducible representations of $G$ with $V_{\lambda+\mu}$ appearing with multiplicity one. Then the multiplication map $[A]_\lambda\otimes [A]_\mu\to [A]_{\lambda+\mu}$ is the projection on $V_{\lambda+\mu}$ in the above decomposition. In particular, we have 
		\[
		[A]_{\sum n_i\omega_i} = [A]_{n_1\omega_1}\cdot \, \cdots \,\cdot [A]_{n_s\omega_s},
		\]
		where $(n_1,\ldots,n_s)\in \NN^s$ and $\omega_1,\ldots, \omega_s$ are fundamental weights of $G$. 
		Therefore, the Cox ring $A$ has decomposable grading and by \autoref{thm_poly_decomp} the volume function $F_A$ is a polynomial.
		
		Finally the global Newton-Okounkov body of $A$ has a nice description. By \cite{kaveh_crystal} there exists a valuation $\nu$ on $A$ such that for any dominant weight $\lambda$, the Newton-Okounkov body $\Delta(A^{(\lambda)})$ is the string polytope $St_\lambda$. Therefore, the global Newton-Okounkov body $\Delta(A)$ is the weighted string cone, which is in particular a polyhedral cone (\cite{littelmanncones,BerZel}). 
		
		Note that, in general, string polytopes $St_\lambda$ provide only piecewise linear family of polytopes on the positive Weyl chamber. So a priory by \autoref{prop_polyhedral}, the function $F_A$ is only piecewise polynomial with respect to some fan decomposition of the positive Weyl chamber. However, using virtual polytopes, one can construct a linear family of \emph{virtual string polytopes} which makes the polynomiality of $F_A$ evident. See \cite[Section 10]{hofscheier} for more details.
	\end{example}
	
	With \autoref{thm_poly_decomp} in hand we are now able to define the mixed multiplicities of $A$.
	
	\begin{definition}
		\label{def_mixed_mult_decomp}
		Assume \autoref{setup_decomp} and let $G_A(\bn)$ be as in \autoref{thm_poly_decomp}. 
		Write 
		$$
		G_A(\bn) \;=\; \sum_{|\dd| = q} \frac{1}{\dd!}\, e(\dd;A)\, \bn^\dd.
		$$
		For each  $\dd = (d_1,\ldots,d_s) \in \NN^s$ with $|\dd| = q$, we define the non-negative real number 
		$
		e(\dd;A) \ge 0
		$ 
		to be the {\it mixed multiplicity of type $\dd$ of $A$}.
	\end{definition}
	
	The next straightforward corollary shows that the mixed multiplicities $e(\dd; A)$ of $A$ can be expressed as a limit that depends on the multiplicities $e\big(\dd; \widetilde{A}_{[p]}\big)$ of the standard multigraded algebras $\widetilde{A}_{[p]}$.
	It can be seen as an extension into a multigraded setting of the Fujita approximation theorem for graded algebras given in \cite[Theorem 2.35]{KAVEH_KHOVANSKII}.
	
	\begin{corollary}
		\label{cor_vol_mult}
		Assume \autoref{setup_decomp}.
		Then, the following equalities hold
		$$
		e(\dd;A) 
		\; = \; 
		\lim_{p \to \infty}  \frac{e\big(\dd; \widetilde{A}_{[p]}\big)}{p^q}
		\; = \; 
		\sup_{p \in \ZZ_+}  \frac{e\big(\dd; \widetilde{A}_{[p]}\big)}{p^q}.
		$$
	\end{corollary}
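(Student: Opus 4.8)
The plan is to obtain this corollary as an immediate consequence of the explicit computation of the polynomial $G_A$ already carried out in the proof of \autoref{thm_poly_decomp}. Recall from \autoref{def_mixed_mult_decomp} that the mixed multiplicities $e(\dd;A)$ are \emph{by definition} the normalized coefficients of $G_A$, i.e., $G_A(\bn) = \sum_{|\dd| = q} \frac{1}{\dd!}\, e(\dd;A)\, \bn^\dd$. On the other hand, equation \eqref{eq_poly_G_A} in the proof of \autoref{thm_poly_decomp} already expresses $G_A$ as
$$
G_A(\bn) \;=\; \sum_{|\dd| = q} \left(\lim_{p \to \infty}  \frac{e\big(\dd; \widetilde{A}_{[p]}\big)}{p^q}\right) \frac{\bn^\dd}{\dd!} \;=\; \sum_{|\dd| = q} \left(\sup_{p \in \ZZ_+}  \frac{e\big(\dd; \widetilde{A}_{[p]}\big)}{p^q}\right) \frac{\bn^\dd}{\dd!},
$$
where each $e\big(\dd; \widetilde{A}_{[p]}\big)$ is the (well-defined, non-negative) mixed multiplicity of the standard $\NN^s$-graded algebra $\widetilde{A}_{[p]}$ from \eqref{eq_poly_A_p}, which makes sense by \autoref{rem_standard_graded} since $\dim(\widetilde{A}_{[p]}) = d$ for $p \gg 0$.

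The remaining step is a comparison of coefficients. The monomials $\{\bn^\dd \mid \dd \in \NN^s,\ |\dd| = q\}$ are linearly independent as polynomial functions, and two polynomials that agree on all of $\ZZ_+^s$ coincide; hence the two displayed expressions for $G_A$, compared with the defining expression $G_A(\bn) = \sum_{|\dd|=q}\frac1{\dd!}e(\dd;A)\bn^\dd$, force
$$
e(\dd;A) \;=\; \lim_{p \to \infty}  \frac{e\big(\dd; \widetilde{A}_{[p]}\big)}{p^q} \;=\; \sup_{p \in \ZZ_+}  \frac{e\big(\dd; \widetilde{A}_{[p]}\big)}{p^q}
$$
for every $\dd$ with $|\dd| = q$, which is precisely the assertion.

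The one point deserving a word of justification — already implicit in \autoref{thm_poly_decomp} — is the interchange of $\lim_{p\to\infty}$ (respectively $\sup_{p\in\ZZ_+}$) with the extraction of a fixed coefficient of a polynomial. This is legitimate because $\{p^{-q} G_{\widetilde{A}_{[p]}}\}_{p\ge 1}$ is a family of homogeneous polynomials of the \emph{fixed} degree $q$ converging pointwise on $\ZZ_+^s$ to $G_A$, and the convergence is monotone in $p$ exactly as established in the proof of \autoref{thm_poly_decomp} (cf.~\cite[Lemma 3.2]{cutkosky2019}); pointwise convergence of polynomials of bounded degree implies coefficientwise convergence, and monotonicity upgrades the limit to a supremum. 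I do not expect any genuine obstacle here: the entire substance of the statement is contained in \autoref{thm_poly_decomp} together with \autoref{rem_standard_graded}, and the corollary is purely a matter of reading off coefficients.
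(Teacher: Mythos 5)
Your proposal is correct and is essentially the paper's own argument: the paper's proof of \autoref{cor_vol_mult} simply says ``It follows directly from \autoref{eq_poly_G_A},'' and you are reading the coefficients off the same identity, with a bit more said about why the interchange of limits/suprema with coefficient extraction is legitimate (which the paper already buries inside the proof of \autoref{thm_poly_decomp} via the citation to \cite[Lemma 3.2]{cutkosky2019}).
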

	\begin{proof}
		It follows directly from \autoref{eq_poly_G_A}.
	\end{proof}
	
	Finally, we provide a complete characterization for the positivity of the mixed multiplicities of a multigraded algebra of almost integral type with decomposable grading. 
	This result is a direct consequence of \autoref{cor_vol_mult} and the general criterion of \cite{POSITIVITY}.
	
	Following the notation of \cite{POSITIVITY}, 	for an $\NN^s$-graded algebra $T$ and for each subset $\mathfrak{J} = \{j_1,\ldots,j_k\}  \subseteq \{1, \ldots, s\}$ denote by $T_{(\fJ)}$ the  $\NN^k$-graded $\kk$-algebra given by 
	$$
	T_{(\fJ)} := \bigoplus_{\substack{i_1\ge 0,\ldots, i_s\ge 0\\ i_{j} = 0 \text{ if } j \not\in \fJ}} {\left[T\right]}_{(i_1,\ldots,i_s)} \; \subset \; T.
	$$
	We obtain a full characterization for the positivity of the mixed multiplicities $e(\dd, A)$ of $A$ in terms of the dimensions $\dim\left(A_{(\fJ)}\right)$.
	
	\begin{theorem}
		\label{thm_postivity_decomp}
		Assume \autoref{setup_decomp}.
		Let $\dd=(d_1,\ldots,d_s) \in \NN^s$ such that $\lvert \dd \rvert=q$.
		Then, $e(\dd, A) > 0$ if and only if for each $\fJ = \{j_1,\ldots,j_k\} \subseteq \{1,\ldots,s\}$ the inequality 
		$$
		d_{j_1} + \cdots + d_{j_k} \;\le\; \dim\left(A_{(\fJ)}\right) - k
		$$
		holds.
	\end{theorem}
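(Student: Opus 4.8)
The plan is to reduce the statement to the positivity criterion for standard $\NN^s$-graded algebras of \cite{POSITIVITY} by means of the Fujita-type approximation of \autoref{cor_vol_mult}, namely $e(\dd;A) = \sup_{p\in\ZZ_+} e\big(\dd;\widetilde{A}_{[p]}\big)/p^q$. First I would record that each $\widetilde{A}_{[p]}$ is a standard $\NN^s$-graded domain (a subalgebra of $R[t_1,\ldots,t_s]$) with $[\widetilde{A}_{[p]}]_{\ee_i} = [A]_{p\ee_i}\neq 0$ and $\dim(\widetilde{A}_{[p]}) \le \dim(A) = d$ by \autoref{thm_dim_subfinite_alg}. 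Since $e(\dd;A)$ is a supremum of the non-negative numbers $e(\dd;\widetilde{A}_{[p]})/p^q$, it is positive if and only if $e\big(\dd;\widetilde{A}_{[p]}\big) > 0$ for some $p$; and because $|\dd| = q$, the non-vanishing of $e(\dd;\widetilde{A}_{[p]})$ forces the degree-$q$ part of the multigraded Hilbert polynomial of $\widetilde{A}_{[p]}$ to be nonzero, hence (\autoref{rem_standard_graded}) $\dim(\widetilde{A}_{[p]}) = d$ and $q = \dim(\widetilde{A}_{[p]}) - s$. For such a $p$ the criterion of \cite{POSITIVITY} applies: $e\big(\dd;\widetilde{A}_{[p]}\big) > 0$ if and only if $d_{j_1}+\cdots+d_{j_k}\le \dim\big((\widetilde{A}_{[p]})_{(\fJ)}\big)-k$ for every $\fJ = \{j_1,\ldots,j_k\}\subseteq\{1,\ldots,s\}$.

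The remaining task is to identify $\dim\big((\widetilde{A}_{[p]})_{(\fJ)}\big)$ with $\dim(A_{(\fJ)})$ for $p$ large. Here I would use decomposability of the grading: one checks straight from the definitions that $A_{(\fJ)}\subset R[t_j:j\in\fJ]$ is again an $\NN^{\fJ}$-graded $\kk$-algebra of almost integral type with decomposable grading and with $[A_{(\fJ)}]_{\ee_j}\neq 0$ for $j\in\fJ$, and that $(\widehat{A}_{[p]})_{(\fJ)} = \kk\big[[A]_{p\ee_j}:j\in\fJ\big] = \widehat{(A_{(\fJ)})}_{[p]}$, so that after regrading $(\widetilde{A}_{[p]})_{(\fJ)}$ is isomorphic to the algebra $\widetilde{(A_{(\fJ)})}_{[p]}$ attached to $A_{(\fJ)}$. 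Consequently $\dim\big((\widetilde{A}_{[p]})_{(\fJ)}\big) \le \dim(A_{(\fJ)})$ for every $p$ (again by \autoref{thm_dim_subfinite_alg}), while the argument used in \autoref{setup_decomp} for $A$ applies verbatim to $A_{(\fJ)}$ and gives $\dim\big((\widetilde{A}_{[p]})_{(\fJ)}\big) = \dim(A_{(\fJ)})$ for all $p\gg 0$.

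With these two facts the equivalence follows. If $e(\dd;A)>0$, pick $p$ with $e\big(\dd;\widetilde{A}_{[p]}\big)>0$; then for each $\fJ$ we get $d_{j_1}+\cdots+d_{j_k}\le \dim\big((\widetilde{A}_{[p]})_{(\fJ)}\big)-k\le \dim(A_{(\fJ)})-k$. Conversely, assuming these inequalities, choose $p$ large enough that $\dim\big((\widetilde{A}_{[p]})_{(\fJ)}\big) = \dim(A_{(\fJ)})$ simultaneously for all (finitely many) subsets $\fJ$; taking $\fJ = \{1,\ldots,s\}$ forces $\dim(\widetilde{A}_{[p]}) = d$, so \cite{POSITIVITY} applies to $\widetilde{A}_{[p]}$ and yields $e\big(\dd;\widetilde{A}_{[p]}\big) > 0$, whence $e(\dd;A) \ge e\big(\dd;\widetilde{A}_{[p]}\big)/p^q > 0$. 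The step I expect to cost the most effort is the second paragraph: checking that forming the $\fJ$-coordinate subalgebra commutes with forming $\widetilde{A}_{[p]}$ (the only place where decomposability of the grading is genuinely used) and that $\dim\big((\widetilde{A}_{[p]})_{(\fJ)}\big)$ stabilises to $\dim(A_{(\fJ)})$ uniformly over the finitely many $\fJ$ as $p\to\infty$; everything else is a direct transcription of \autoref{cor_vol_mult} and the standard-graded criterion of \cite{POSITIVITY}.
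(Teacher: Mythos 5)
Your proposal is correct and follows essentially the same route as the paper: reduce to the standard multigraded case via \autoref{cor_vol_mult} and \cite[Theorem B]{POSITIVITY} applied to $\widetilde{A}_{[p]}$, and then identify $\dim\big((\widetilde{A}_{[p]})_{(\fJ)}\big)$ with $\dim\big(A_{(\fJ)}\big)$ for $p\gg 0$ using decomposability and \autoref{thm_dim_subfinite_alg}. The paper's proof is a terser version of exactly this argument; your second paragraph simply makes explicit the identification $(\widetilde{A}_{[p]})_{(\fJ)}\cong\widetilde{(A_{(\fJ)})}_{[p]}$ and the stabilization of dimensions that the paper asserts without elaboration.
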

	\begin{proof}
		For each $p \ge 1$, \cite[Theorem B]{POSITIVITY} characterizes the positivity of the mixed multiplicities of the standard $\NN^s$-graded algebra $\widetilde{A}_{[p]}$, namely:
		$$
		e\left(\dd;\widetilde{A}_{[p]}\right) > 0 \;\;\, \Longleftrightarrow\;\;\, d_{j_1} + \cdots + d_{j_k} \;\le\; \dim\left(\left(\widetilde{A}_{[p]}\right)_{(\fJ)}\right) - k \;\text{ for each }\; \fJ = \{j_1,\ldots,j_k\} \subseteq \{1,\ldots,s\}.
		$$
		Since $A$ is an algebra with decomposable grading, by \autoref{thm_dim_subfinite_alg}, we can choose $p$ big enough such that 
		$$
		\dim\left(A_{(\fJ)}\right) \; = \; \dim\left(\left(\widetilde{A}_{[p]}\right)_{(\fJ)}\right)
		$$
		for all $\fJ = \{j_1,\ldots,j_k\} \subseteq \{1,\ldots,s\}$.
		Therefore, the result follows from \autoref{cor_vol_mult}.
	\end{proof}

	\section{Application to graded families of ideals}
		\label{sect_appl_ideals}
	
	In this section, we apply the results of \autoref{sect_decomp_grad} to the case of graded families of ideals and we recover some results from \cite{MIXED_MULT_GRAD_FAM, MIXED_VOL_MONOM, cutkosky2019}.
	We also obtain a characterization for the positivity of mixed multiplicities of certain graded families of ideals.
	First, we recall the notion of mixed multiplicities introduced by Bhattacharya in \cite{Bhattacharya} for the case of ideals, and extended for (not necessarily Noetherian) graded families of ideals in \cite{MIXED_MULT_GRAD_FAM}.
	
	Let $\kk$ be a field and $R$ be a finitely generated positively graded $\kk$-domain.
	We denote the graded irrelevant ideal of $R$ by $\mm = [R]_+ = \bigoplus_{n = 1}^\infty [R]_n \subset R$. 
	Assume that $R$ has positive dimension $d = \dim(R) > 0$.
	
	\begin{remark}
		\label{rem_mixed_mult_ideals}
		Let $I, J_1, \ldots, J_s$ be non-zero homogeneous ideals in $R$ such that  $I$ is $\mm$-primary.  
		Then,  for $n_0\gg 0$ and $\bn=(n_1,\ldots,n_s)\gg \mathbf{0}$ the Bhattacharya function $\dim_\kk(I^{n_0}J_1^{n_1}\cdots J_s^{n_s}/I^{n_0+1}J_1^{n_1}\cdots J_s^{n_s})$ coincides with  a polynomial  of total degree $d-1$  whose homogeneous term  in degree $d-1$ can be written as 
		$$
		B_{(I;J_1,\ldots, J_s)}(n_0,n_1,\ldots, n_s) \;:=\; \sum_{\substack{
				(d_0,\fd)=(d_0,d_1,\ldots, d_s)\in \NN^{s}\\ d_0+|\fd|=d-1}
		}\frac{ e_{(d_0,\fd)}(I\mid J_1,\ldots, J_s)}{d_0!d_1!\cdots d_s!} n_0^{d_0}n_1^{d_1}\cdots, n_s^{d_s}.
		$$
		Using standard techniques (see \cite[proof of Lemma 4.2]{MIXED_VOL_MONOM}), one may show that for each $n_0\in \NN$ and $\bn=(n_1,\ldots, n_s)\in \NN^s$ the limit 
		$$
		\lim_{n \to \infty}\frac{\dim_\kk \left( J_1^{nn_1}\cdots J_s^{nn_s}/I^{nn_0}J_1^{nn_1}\cdots J_s^{nn_s} \right)}{n^{d}}
		$$ 
		exists and coincides with the following  polynomial 
		\begin{equation*}
			G_{(I;J_1,\ldots, J_s)}(n_0,n_1,\ldots, n_s) \;:=\; \sum_{(d_0,\dd) \in \NN^{s+1}, d_0+|\fd|=d-1}\frac{ e_{(d_0,\fd)}(I\mid J_1,\ldots, J_s)}{(d_0+1)!d_1!\cdots d_s!}\, n_0^{d_0+1}n_1^{d_1}\cdots n_s^{d_s}.
		\end{equation*}
		The numbers $e_{(d_0,\fd)}(I\mid J_1,\ldots, J_s)$ are non-negative integers called the {\it mixed multiplicities} of $J_1,\ldots, J_s$ with respect to $I$.	
	\end{remark}
	
	A sequence of ideals $\II=\{I_n\}_{n\in \NN}$ is a {\it graded family} if $I_0=R$ and $I_iI_j\subseteq I_{i+j}$ for every $i,j\in \NN$. 
	The graded family is {\it Noetherian} if the corresponding Rees algebra $R[\II t]=\bigoplus_{n\in \NN}I_nt^n\subseteq R[t]$ is Noetherian. 
	The graded family is \emph{$\mm$-primary} when each $I_n$ is $\mm$-primary, and it is a \emph{filtration} when  $I_{n+1}\subseteq I_n$ for every $ n \in \NN$.  
	For a homogeneous ideal $J \subset R$ we denote ${\rm maxdeg}(J) := \max\{j\mid \left[J\otimes_R R/\mm\right]_j\neq 0\}$, that is, the maximum degree of a minimal set of homogeneous generators of $J$. 
	
	Throughout this section, we assume the following setup. 
	
	\begin{setup}
		\label{setup_grad_fam_ideals}
		Let $\kk$ be a field and $R$ be a finitely generated positively graded $\kk$-domain.
		Let $\II = \{I_n\}_{n \in \NN}$ be a (not necessarily Noetherian) graded family of $\mm$-primary homogeneous ideals in $R$.
		Let $\JJ(1)=\{J(1)_n\}_{n\in \NN}$, $\ldots$, $\JJ(s)=\{J(s)_n\}_{n\in \NN}$ be (not necessarily Noetherian) graded families of non-zero homogeneous ideals in $R$.
		We assume that there exists $\beta \in \NN$ satisfying 
		\begin{equation}
			\label{eq_linear_growth}
			{\rm maxdeg}(J(i)_n) \le \beta n\quad\text{for all}\quad 1 \le i \le s\ \ \text{and}\ \ n \in \NN.
		\end{equation}
	\end{setup}
	
	\begin{remark}
		The condition \autoref{eq_linear_growth} is automatically satisfied in the case of $\mm$-primary graded families of ideals.
		Similar assumptions to the one in \autoref{eq_linear_growth} have been considered in previous works regarding limits of graded families of ideals \cite[Theorem 6.1]{cutkosky2014}, \cite{MIXED_VOL_MONOM}, \cite{MIXED_MULT_GRAD_FAM}.
	\end{remark}
	
	To extend the discussions of \autoref{rem_mixed_mult_ideals}, we need to study the following function 
	\begin{equation}
		\label{eq_funct_grad_fam_ideals}
		F_{(\II;\JJ(1),\ldots, \JJ(s))}(n_0,\bn) \; := \; \lim_{n \to \infty}\frac{\dim_\kk \left( J(1)_{nn_1}\cdots J(s)_{nn_s}/I_{nn_0}J(1)_{nn_1}\cdots J(s)_{nn_s} \right)}{n^{d}} 
	\end{equation}
	for all $n_0 \in \NN_+$ and $\bn = (n_1,\ldots,n_s) \in \NN_+^s$.
	For a vector $\bn=(n_1,\ldots, n_s) \in \NN^{s}$, we  abbreviate $\bJ_\bn=J(1)_{n_1}\cdots J(s)_{n_s}$. 
	By \cite[Lemma 3.9]{MIXED_VOL_MONOM}, there exists $c > \beta$ such that 
	$$
	\mm^{c(n_0+|\bn|)} \, \cap \, \bJ_\bn \;=\; \mm^{c(n_0+|\bn|)} \, \cap \, I_{n_0}\bJ_\bn
	$$
	for all $n_0 \in \NN$ and $\bn = (n_1,\ldots,n_s) \in \NN^s$.
	Then, we have the equality
	\begin{align}
		\label{eq_F_filt_F_diff}
		\begin{split}
			F_{(\II;\JJ(1),\ldots, \JJ(s))}(n_0,\bn) \; := \; & \lim_{n \to \infty}  \dim_{\kk}\left(\bJ_{n\bn} / \big(\mm^{cn(n_0+|\bn|)+1} \, \cap \, \bJ_{n\bn}\big) \right) \big/ n^d \\
			& \qquad\quad - \; \lim_{n \to \infty}  	\dim_{\kk}\left(I_{nn_0}\bJ_{n\bn} / \big(\mm^{cn(n_0+|\bn|)+1} \, \cap \, I_{nn_0}\bJ_{n\bn}\big) \right) \big/ n^d
		\end{split}
	\end{align} 
	for all $n_0 \in \NN$ and $\bn = (n_1,\ldots,n_s) \in \NN^s$.
	We consider the multi-Rees algebras 
	$$
	\Rees(\II, \JJ(1),\ldots,\JJ(s)) := \bigoplus_{n_0\in \NN, \bn \in \NN^s} I_{n_0} \bJ_{\bn} t_0^{n_0}\ttt^\bn
	\qquad \text{ and } \qquad \Rees(\mathfrak{R}, \JJ(1),\ldots,\JJ(s)) := \bigoplus_{n_0 \in \NN, \bn \in \NN^s} \bJ_{\bn} t_0^{n_0}\ttt^\bn
	$$
	where $\ttt^\bn = t_1^{n_1}\cdots t_s^{n_s}$ and $\mathfrak{R}$ is the trivial filtration of identity ideals, i.e.,~$\mathfrak{R} = \{\mathfrak{R}_n\}_{n \in \NN}$ with $\mathfrak{R}_n = R$.
	We have the corresponding $\NN^{s+1}$-graded subalgebras 
	$$
	A \;:=\;  \bigoplus_{n_0\in \NN, \bn \in \NN^s} \left( \bigoplus_{k = 0}^{c(n_0+|\bn|)}  {\big[I_{n_0} \bJ_{\bn}\big]}_k  \right) t_0^{n_0}\ttt^\bn \;\; \subset \;\;  \Rees(\II, \JJ(1),\ldots,\JJ(s))
	$$
	and 
	$$
	B \;:=\;  \bigoplus_{n_0\in \NN, \bn \in \NN^s} \left( \bigoplus_{k = 0}^{c(n_0+|\bn|)}  {\big[ \bJ_{\bn}\big]}_k  \right) t_0^{n_0}\ttt^\bn \;\; \subset \;\;  \Rees(\mathfrak{R}, \JJ(1),\ldots,\JJ(s)).
	$$
	Notice that both $A$ and $B$ are of almost integral type and have decomposable gradings.
	Since $\dim(A) = \dim(B) = d+s+1$, we can rewrite \autoref{eq_F_filt_F_diff} in terms of the volume functions of $A$ and $B$ (see \autoref{eq_function_A}), that is 
	\begin{equation}
		\label{eq_F_ideals_A_B}
		F_{(\II;\JJ(1),\ldots, \JJ(s))}(n_0,\bn) \; = \; F_B(n_0,\bn) - F_A(n_0,\bn).
	\end{equation}
	Therefore, as a simple consequence of \autoref{thm_poly_decomp} we obtain the following result, which extends \autoref{rem_mixed_mult_ideals} and allows us to define mixed multiplicities for graded families of ideals.
	
	\begin{theorem}
		\label{thm_grad_fam_ideals}
		Assume \autoref{setup_grad_fam_ideals}.
		Then, there exists a homogeneous polynomial $G_{(\II;\JJ(1),\ldots, \JJ(s))}(n_0,\bn) \in \RR[n_0,n_1,\ldots,n_s]$ of degree $d$ with non-negative real coefficients such that 
		$$
		F_{(\II;\JJ(1),\ldots, \JJ(s))}(n_0,\bn) \; = \; G_{(\II;\JJ(1),\ldots, \JJ(s))}(n_0,\bn) \quad \text{ for all } \quad  n_0 \in \ZZ_+, \, \bn \in \ZZ_+^s,
		$$
		where $F_{(\II;\JJ(1),\ldots, \JJ(s))}(n_0,\bn)$ is the function in \autoref{eq_funct_grad_fam_ideals}.
		Additionally, we have 
		$$
		G_{(\II;\JJ(1),\ldots, \JJ(s))}(n_0,\bn) \; = \; \lim_{p \to \infty} \frac{
			G_{(I_p;J(1)_p,\ldots, J(s)_p)}(n_0,\bn)
		}{p^d}  \quad \text{ for all }   \quad  n_0 \in \ZZ_+, \, \bn \in \ZZ_+^s,
		$$
		where $G_{(I_p;J(1)_p,\ldots, J(s)_p)}(n_0,\bn)$ is the corresponding polynomial of the ideals $I_p, J(1)_p,\ldots,J(s)_p$ (as in \autoref{rem_mixed_mult_ideals}).
	\end{theorem}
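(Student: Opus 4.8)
The plan is to deduce everything from \autoref{thm_poly_decomp} applied to the two auxiliary multigraded algebras $A$ and $B$ constructed above, using \autoref{eq_F_ideals_A_B} to pass from $F_{(\II;\JJ(1),\ldots,\JJ(s))}$ to $F_A$ and $F_B$, and then matching the coefficients of the resulting polynomial with the Bhattacharya polynomials of the ideals $I_p,J(1)_p,\ldots,J(s)_p$ through the approximation results of \autoref{sect_decomp_grad}.

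First I would check that $A$ and $B$ satisfy \autoref{setup_decomp}: as observed right before the statement, both are $\NN^{s+1}$-graded $\kk$-algebras of almost integral type with decomposable grading, both have $\dim(A)=\dim(B)=d+s+1$, and $[A]_{\ee_i},[B]_{\ee_i}\neq 0$ for $0\le i\le s$ (enlarging the constant $c$ if necessary). Thus, in the notation of \autoref{thm_poly_decomp} with $s+1$ gradings, the relevant degree is $q=(d+s+1)-(s+1)=d$, and \autoref{thm_poly_decomp} produces homogeneous polynomials $G_A,G_B\in\RR[n_0,\ldots,n_s]$ of degree $d$ with non-negative coefficients such that $F_A=G_A$ and $F_B=G_B$ on $\ZZ_+^{s+1}$. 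Combining this with \autoref{eq_F_ideals_A_B}, the function $F_{(\II;\JJ(1),\ldots,\JJ(s))}$ of \autoref{eq_funct_grad_fam_ideals} agrees on $\ZZ_+^{s+1}$ with the degree-$d$ homogeneous polynomial $G_{(\II;\JJ(1),\ldots,\JJ(s))}:=G_B-G_A$. This already yields the first assertion, except that the non-negativity of the coefficients is not visible from the difference $G_B-G_A$.

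To get the limit formula and the non-negativity at once, I would invoke the approximation part of \autoref{thm_poly_decomp} (equivalently \autoref{cor_vol_mult}): the coefficients of $\tfrac{1}{p^{d}}G_{\widetilde A_{[p]}}$ and $\tfrac{1}{p^{d}}G_{\widetilde B_{[p]}}$ converge coefficientwise to those of $G_A$ and $G_B$, so that
$$
G_{(\II;\JJ(1),\ldots,\JJ(s))}\;=\;\lim_{p\to\infty}\frac{G_{\widetilde B_{[p]}}-G_{\widetilde A_{[p]}}}{p^{d}}
$$
coefficient by coefficient. It then remains to show this limit equals $\lim_{p\to\infty}p^{-d}\,G_{(I_p;J(1)_p,\ldots,J(s)_p)}$, where $G_{(I_p;J(1)_p,\ldots,J(s)_p)}$ is the polynomial of \autoref{rem_mixed_mult_ideals} attached to the \emph{fixed} ideals $I_p,J(1)_p,\ldots,J(s)_p$. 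For this I would unwind the definitions: $\widehat A_{[p]}$ (resp.\ $\widehat B_{[p]}$) is generated by the pieces $\bigl[I_p\bigr]_{\le cp}t_0^{\,p},\bigl[J(i)_p\bigr]_{\le cp}t_i^{\,p}$ (resp.\ with $R$ in place of $I_p$), so after regrading $\widetilde A_{[p]}$ and $\widetilde B_{[p]}$ can be sandwiched — via the multiplication maps and \autoref{thm_approx}, exactly as in the proofs of \autoref{prop_tuncation} and \autoref{prop_subalg_p} — between these algebras and the truncated multi-Rees algebras of the fixed ideals $I_p,J(1)_p,\ldots,J(s)_p$, truncated at the level furnished by \cite[Lemma 3.9]{MIXED_VOL_MONOM} for those ideals (which is legitimate since $\beta(J(i)_p)\le\beta p<cp$). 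The corresponding difference of Hilbert polynomials is, by the argument of \autoref{eq_F_filt_F_diff}--\autoref{eq_F_ideals_A_B} run for these fixed ideals, exactly $G_{(I_p;J(1)_p,\ldots,J(s)_p)}$, which by \autoref{rem_mixed_mult_ideals} is a polynomial with non-negative integer coefficients. Passing to the coefficientwise limit in $p$ then gives simultaneously the displayed formula and the non-negativity of the coefficients of $G_{(\II;\JJ(1),\ldots,\JJ(s))}$.

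I expect the main obstacle to be precisely this last comparison: identifying, asymptotically in $p$ and after dividing by $p^{d}$, the standard multigraded algebras $\widetilde A_{[p]},\widetilde B_{[p]}$ — which are only the subalgebras \emph{generated by} their degree-$p$ components — with the truncated multi-Rees algebras of the $p$-th ideals, so that the Bhattacharya polynomial $G_{(I_p;J(1)_p,\ldots,J(s)_p)}$ appears. The subtleties are that the quotients $\bigl[\bJ_{p\bn}\bigr]_{\le cp(n_0+|\bn|)}\big/\bigl[I_{pn_0}\bJ_{p\bn}\bigr]_{\le cp(n_0+|\bn|)}$ are captured correctly by the chosen truncation level and that the defect produced by the ``generated in degree $p$'' restriction is $o(p^{d})$; both are precisely the kind of control that \autoref{thm_approx} (hence \autoref{thm_poly_decomp}) supplies, and once it is in place the remaining steps are routine.
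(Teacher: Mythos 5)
Your proposal is correct and follows essentially the same route as the paper, whose entire proof is the one-line observation that the result follows from \autoref{eq_F_ideals_A_B} together with \autoref{thm_poly_decomp} applied to the algebras $A$ and $B$. You in fact supply two details the paper leaves implicit --- that the non-negativity of the coefficients of $G_B-G_A$ cannot be read off from \autoref{thm_poly_decomp} alone and must instead come from the coefficientwise limit of the Bhattacharya polynomials $G_{(I_p;J(1)_p,\ldots, J(s)_p)}$, and that identifying $G_{\widetilde{B}_{[p]}}-G_{\widetilde{A}_{[p]}}$ with $G_{(I_p;J(1)_p,\ldots, J(s)_p)}$ up to $o(p^d)$ requires the sandwich argument via \autoref{thm_approx} --- so your write-up is, if anything, more complete than the paper's.
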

	\begin{proof}
		By using the equality \autoref{eq_F_ideals_A_B}, the result follows by applying \autoref{thm_poly_decomp} to the $\NN^{s+1}$-graded algebras of almost integral type $A$ and $B$ that have decomposable grading.
	\end{proof}
	
	As a consequence of \autoref{thm_grad_fam_ideals} we have the following definition.
	
	\begin{definition}
		Assume \autoref{setup_grad_fam_ideals} and let $G_{(\II;\JJ(1),\ldots, \JJ(s))}(n_0,\bn)$ be as in \autoref{thm_grad_fam_ideals}. 
		Write 
		$$
		G_{(\II;\JJ(1),\ldots, \JJ(s))}(n_0,\bn) \;=\; \sum_{(d_0,\dd) \in \NN^{s+1}, d_0+|\fd|=d-1}\frac{ e_{(d_0,\fd)}(\II\mid \JJ(1),\ldots, \JJ(s))}{(d_0+1)!d_1!\cdots d_s!}\, n_0^{d_0+1}\bn^\dd.
		$$
		The numbers $e_{(d_0,\fd)}(\II\mid \JJ(1),\ldots, \JJ(s))$ are non-negative integers called the {\it mixed multiplicities} of $\JJ(1),\ldots, \JJ(s)$ with respect to $\II$.
	\end{definition}

	The following result recovers the ``Volume = Multiplicity formula'' for the mixed multiplicities of graded families of ideals (see \cite{MIXED_MULT_GRAD_FAM, MIXED_VOL_MONOM}).
	
	\begin{corollary}
		\label{cor_ideals_vol_mult}
		Assume \autoref{setup_grad_fam_ideals}.
		Then, the following equalities hold
		$$
		e_{(d_0,\fd)}(\II\mid \JJ(1),\ldots, \JJ(s))
		\; = \; 
		\lim_{p \to \infty}  \frac{e_{(d_0,\fd)}(I_p\mid J(1)_p,\ldots, J(s)_p)}{p^d}.
		$$
	\end{corollary}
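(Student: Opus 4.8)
The plan is to deduce this directly from \autoref{thm_grad_fam_ideals} by comparing coefficients of the relevant polynomials. First I would recall that, by definition, $e_{(d_0,\fd)}(\II\mid \JJ(1),\ldots, \JJ(s))$ is (up to the factorial factor $(d_0+1)!\,d_1!\cdots d_s!$) the coefficient of $n_0^{d_0+1}\bn^\dd$ in the homogeneous polynomial $G_{(\II;\JJ(1),\ldots, \JJ(s))}(n_0,\bn)$ of degree $d$, and similarly $e_{(d_0,\fd)}(I_p\mid J(1)_p,\ldots, J(s)_p)$ is, for each fixed $p$, the corresponding coefficient of the polynomial $G_{(I_p;J(1)_p,\ldots, J(s)_p)}(n_0,\bn)$ described in \autoref{rem_mixed_mult_ideals}.

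Next, \autoref{thm_grad_fam_ideals} gives the identity
$$
G_{(\II;\JJ(1),\ldots, \JJ(s))}(n_0,\bn) \; = \; \lim_{p \to \infty} \frac{G_{(I_p;J(1)_p,\ldots, J(s)_p)}(n_0,\bn)}{p^d} \quad \text{ for all } \quad n_0 \in \ZZ_+, \, \bn \in \ZZ_+^s.
$$
Since $\ZZ_+^{s+1}$ is Zariski-dense in $\RR^{s+1}$ and each $G_{(I_p;J(1)_p,\ldots, J(s)_p)}$ is a genuine polynomial of degree $d$ (homogeneous after the change of variables of \autoref{rem_mixed_mult_ideals}), the pointwise limit of $p^{-d}G_{(I_p;J(1)_p,\ldots, J(s)_p)}$, which agrees with the polynomial $G_{(\II;\JJ(1),\ldots, \JJ(s))}$ on this dense set, forces convergence of each individual coefficient: writing both sides in the monomial basis $\{n_0^{d_0+1}\bn^\dd\}_{d_0+|\fd|=d-1}$ and using that finitely many monomial evaluations at distinct points of $\ZZ_+^{s+1}$ determine the coefficients by an invertible linear system, one gets that $p^{-d}\,e_{(d_0,\fd)}(I_p\mid J(1)_p,\ldots, J(s)_p)$ converges to $e_{(d_0,\fd)}(\II\mid \JJ(1),\ldots, \JJ(s))$ for each $(d_0,\fd)$. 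This is exactly the asserted equality.

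I do not anticipate a genuine obstacle here; the only point requiring a small amount of care is the passage from a pointwise-on-$\ZZ_+^{s+1}$ limit of polynomials to coefficient-wise convergence, which is handled by the standard density/Vandermonde argument sketched above (alternatively one can invoke \cite[Lemma 3.2]{cutkosky2019} as in the proof of \autoref{thm_poly_decomp}). One could also phrase the whole argument more economically by simply noting that \autoref{eq_poly_G_A}, applied to the decomposably graded algebras $A$ and $B$ of \autoref{eq_F_ideals_A_B}, already expresses each coefficient of $G_B$ and $G_A$ as such a limit, and then subtracting via \autoref{eq_F_ideals_A_B}.
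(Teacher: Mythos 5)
Your proposal is correct and follows essentially the same route as the paper, whose proof is just the citation of \autoref{thm_grad_fam_ideals} and \autoref{rem_mixed_mult_ideals}. The only content you add is making explicit the passage from pointwise convergence of the degree-$d$ homogeneous polynomials on $\ZZ_+^{s+1}$ to coefficient-wise convergence, which is exactly the standard finite-dimensionality argument the paper itself relies on (via \cite[Lemma 3.2]{cutkosky2019}) in \autoref{eq_poly_G_A}.
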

	\begin{proof}
		It follows from \autoref{thm_grad_fam_ideals} and \autoref{rem_mixed_mult_ideals}.
	\end{proof}

	\subsection{Positivity for graded families of equally generated ideals}
		\label{subsect_equigen_ideals}
	
	Here, we restrict to the following graded families of ideals and we provide a criterion for the positivity of their mixed multiplicities. 
	
	\begin{setup}
		\label{setup_grad_fam_equigen}
		Assume \autoref{setup_grad_fam_ideals}.
		Let $\bmm$ be the filtration $\bmm := \{\mm^n\}_{n \in \NN}$.
		For each $1 \le i \le s$, assume that there is an integer $h_i \ge 1$ such that the ideal $J(i)_n$ is generated in degree $nh_i$ for all $n \in \NN$,  that is, 
		$$
		J(i)_{n} \; = \; \left( \big[J(i)_{n}\big]_{nh_i} \right) \quad \text{ for all } n \in \NN.
		$$
	\end{setup}
	
	Our focus is on characterizing the positivity of the mixed multiplicities $e_{(d_0,\dd)}(\bmm \mid \JJ(1),\ldots,\JJ(s))$.
	We define the following $\NN^{s+1}$-graded $\kk$-algebra 
	\begin{equation}
		\label{eq_algebra_T}
		T \; := \; \bigoplus_{n_0,\ldots,n_s \in \NN}   \mm^{n_0} J(1)_{n_1} \cdots J(s)_{n_s} \Big/ \mm^{n_0+1} J(1)_{n_1} \cdots J(s)_{n_s}.
	\end{equation}
	Since each ideal $J(i)_{n_i}$ is generated in degree $n_ih_i$, by Nakayama's lemma we get the isomorphism 
	$$
	T \; \cong \; \bigoplus_{n_0,\ldots,n_s \in \NN}   \Big[\mm^{n_0}\Big]_{n_0} \Big[J(1)_{n_1}\Big]_{n_1h_1} \cdots \Big[J(s)_{n_s}\Big]_{n_sh_s} \quad \subset \quad \Rees(\bmm, \JJ(1),\ldots, \JJ(s)).
	$$
	As a consequence we obtain that $T$ is a $\kk$-domain of almost integral type with decomposable grading, and so by \autoref{thm_poly_decomp} and \autoref{def_mixed_mult_decomp}  we can consider its mixed multiplicities $e\big(d_0,\dd; T\big)$ for $d_0 \in \NN, \dd\in \NN^{s}$.
	We have that $\dim(T) = \dim(R)+s$.
	The following proposition shows that the mixed multiplicities of $T$ coincide with the mixed multiplicities $e_{(d_0,\dd)}(\bmm \mid \JJ(1),\ldots,\JJ(s))$.
	
	\begin{proposition}
		\label{prop_equal_mixed_mult_T}
		For each $d_0 \in \NN, \, \dd \in \NN^s$ with $d_0 + |\dd| = d-1$, we have the equality
		$$
		e_{(d_0,\dd)}(\bmm \mid \JJ(1),\ldots,\JJ(s)) \; = \; e\big(d_0,\dd; T\big).
		$$
	\end{proposition}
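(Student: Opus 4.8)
The plan is to reduce the proposition to a single polynomial identity and then prove that identity by a slicing/Riemann‑sum argument. Recall first that $\dim(T)=\dim(R)+s=d+s$, so $T$ is an $\NN^{s+1}$‑graded $\kk$‑algebra of almost integral type with decomposable grading and $q_T=\dim(T)-(s+1)=d-1$; hence by \autoref{thm_poly_decomp} and \autoref{def_mixed_mult_decomp} one has $F_T(n_0,\bn)=G_T(n_0,\bn)=\sum_{d_0+|\dd|=d-1}\frac{e(d_0,\dd;T)}{d_0!\,\dd!}\,n_0^{d_0}\bn^{\dd}$ for $(n_0,\bn)\in\ZZ_+^{s+1}$, while \autoref{thm_grad_fam_ideals} and the definition following it give $F_{(\bmm;\JJ(1),\ldots,\JJ(s))}(n_0,\bn)=G_{(\bmm;\JJ(1),\ldots,\JJ(s))}(n_0,\bn)=\sum_{d_0+|\dd|=d-1}\frac{e_{(d_0,\dd)}(\bmm\mid\JJ(1),\ldots,\JJ(s))}{(d_0+1)!\,\dd!}\,n_0^{d_0+1}\bn^{\dd}$. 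Differentiating the latter in $n_0$ absorbs exactly the factor $(d_0+1)$, so the proposition is equivalent to $\frac{\partial}{\partial n_0}G_{(\bmm;\JJ(1),\ldots,\JJ(s))}=G_T$, which, since both sides are polynomials, follows as soon as one proves
\[
G_{(\bmm;\JJ(1),\ldots,\JJ(s))}(n_0,\bn)\;=\;\int_0^{n_0}G_T(x,\bn)\,dx\qquad\text{for all }(n_0,\bn)\in\ZZ_+^{s+1}.
\]

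To establish this integral formula I would start from the $\mm$‑adic filtration $\bJ_{n\bn}\supseteq\mm\bJ_{n\bn}\supseteq\cdots\supseteq\mm^{nn_0}\bJ_{n\bn}$, whose successive quotients are, by the definition \autoref{eq_algebra_T} of $T$, exactly the graded components $[T]_{(j,n\bn)}=\mm^{j}\bJ_{n\bn}/\mm^{j+1}\bJ_{n\bn}$; thus $\dim_{\kk}\!\big(\bJ_{n\bn}/\mm^{nn_0}\bJ_{n\bn}\big)=\sum_{j=0}^{nn_0-1}\dim_{\kk}\!\big([T]_{(j,n\bn)}\big)$. Dividing by $n^{d}$, the left‑hand side converges to $F_{(\bmm;\JJ(1),\ldots,\JJ(s))}(n_0,\bn)$ by \autoref{eq_funct_grad_fam_ideals}, while the right‑hand side becomes the Riemann sum $\frac1n\sum_{j=0}^{nn_0-1}\frac{1}{n^{d-1}}\dim_{\kk}\!\big([T]_{(j,n\bn)}\big)$. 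For rational $x>0$, homogeneity of $G_T$ (of degree $d-1$) gives $\frac{1}{n^{d-1}}\dim_{\kk}\!\big([T]_{(\lceil nx\rceil,n\bn)}\big)\to G_T(x,\bn)=F_T(x,\bn)$, and \cite[Lemma 3.9]{MIXED_VOL_MONOM} bounds $\dim_{\kk}\!\big(\bJ_{n\bn}/\mm^{nn_0}\bJ_{n\bn}\big)$, hence all the relevant partial sums, by $O(n^{d})$; together with the monotonicity of the partial sums in their upper limit this identifies the limit of the Riemann sums with $\int_0^{n_0}G_T(x,\bn)\,dx$, yielding the displayed identity, and the proposition then follows by differentiating and comparing coefficients as above.

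The hard part will be the passage to the limit in the Riemann sum: an individual slice $\frac{1}{n^{d-1}}\dim_{\kk}\!\big([T]_{(j,n\bn)}\big)$ only approximates $G_T(j/n,\bn)$ beyond an ``$n$‑independent stable range'', and controlling this uniformly for $j\in[0,nn_0)$ requires care. I would carry this out exactly as in \cite{MIXED_VOL_MONOM,cutkosky2019}: replace $T$ by the truncated algebras $A$ and $B$ figuring in \autoref{eq_F_ideals_A_B} (both of almost integral type with decomposable grading), for which \autoref{thm_poly_decomp} and the Fujita‑type approximation \autoref{cor_vol_mult} supply the needed polynomial control, and then let the truncation level tend to infinity. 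Alternatively, one may note that adjoining a single variable $u$ of degree $\ee_0$ produces $T[u]$, again of almost integral type with decomposable grading and of dimension $d+s+1$, with $\dim_{\kk}\!\big([T[u]]_{n(n_0,\bn)}\big)=\sum_{j=0}^{nn_0}\dim_{\kk}\!\big([T]_{(j,n\bn)}\big)$; since this differs from $\dim_{\kk}\!\big(\bJ_{n\bn}/\mm^{nn_0}\bJ_{n\bn}\big)$ only by the term $\dim_{\kk}\!\big([T]_{(nn_0,n\bn)}\big)=o(n^{d})$, one obtains $G_{(\bmm;\JJ(1),\ldots,\JJ(s))}=G_{T[u]}$ at once, reducing everything to the single‑algebra identity $\partial_{n_0}G_{T[u]}=G_T$.
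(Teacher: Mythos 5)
Your structural reduction is appealing and its first half is sound: the filtration identity $\dim_\kk\big(\bJ_{n\bn}/\mm^{nn_0}\bJ_{n\bn}\big)=\sum_{j=0}^{nn_0-1}\dim_\kk\big([T]_{(j,n\bn)}\big)$ is correct, and your alternative via $T[u]$ does give, cleanly and with no Riemann sum, the polynomial identity $G_{(\bmm;\JJ(1),\ldots,\JJ(s))}=G_{T[u]}$ (indeed $T[u]$ is again of almost integral type with decomposable grading, so \autoref{thm_poly_decomp} applies, and the two volume functions differ only by the $o(n^d)$ term $\dim_\kk[T]_{(nn_0,n\bn)}$). The genuine gap is the step you relegate to the end: the identity $\partial_{n_0}G_{T[u]}=G_T$ (equivalently $e(d_0+1,\dd;T[u])=e(d_0,\dd;T)$, equivalently $G_{(\bmm;\cdots)}(n_0,\bn)=\int_0^{n_0}G_T(x,\bn)\,dx$) is not a formal consequence of anything you have established --- it \emph{is} the proposition, rewritten. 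Your Riemann-sum argument needs $\frac{1}{n^{d-1}}\dim_\kk[T]_{(j,n\bn)}\to G_T(j/n,\bn)$ \emph{uniformly} as $j/n$ ranges over $[0,n_0]$; what \autoref{thm_limit_graded_domain} provides is pointwise convergence along each fixed ray (and for a slope $x$ only along the integer multiples of a representative of $x$), and this together with an $O(n^d)$ bound on the total sum does not justify interchanging the limit with the sum, since the defect could persist on a positive-measure set of slopes. The exact sequence $0\to T[u](-\ee_0)\xrightarrow{\,u\,}T[u]\to T\to 0$ does not help either: convergence of $\dim_\kk[T[u]]_{(nn_0,n\bn)}/n^{d}$ says nothing about the difference quotients. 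The proposed fix via ``the truncated algebras $A$ and $B$ figuring in \autoref{eq_F_ideals_A_B}'' does not apply as stated, since those are degree truncations of the Rees algebras used to express $F_{(\II;\JJ(1),\ldots,\JJ(s))}$ as a difference of volume functions and do not control the individual slices $[T]_{(j,n\bn)}$; and if you instead attack $\partial_{n_0}G_{T[u]}=G_T$ by Fujita approximation (\autoref{cor_vol_mult}), note that $\widetilde{(T[u])}_{[p]}\neq\widetilde{T}_{[p]}[u]$ because $[T[u]]_{p\ee_0}=\bigoplus_{j\le p}[T]_{j\ee_0}u^{p-j}$, so the level-$p$ approximations of $T[u]$ and of $T$ do not match termwise.

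The paper sidesteps this difficulty by performing the ``differentiation'' at each finite level $p$, where it is classical: the Hilbert polynomial of the standard algebra $C^p$ generated by $[T]_{\ee_0},[T]_{p\ee_1},\ldots,[T]_{p\ee_s}$ is the Bhattacharya polynomial $B_{(\mm;J(1)_p,\ldots,J(s)_p)}$, and \autoref{rem_mixed_mult_ideals} already records that its leading coefficients are those of the integrated polynomial $G_{(\mm;J(1)_p,\ldots,J(s)_p)}$ up to the factor $(d_0+1)$. The rescalings $G_{C^p}(pn_0,\bn)=G_{\widetilde{T}_{[p]}}(n_0,\bn)$ and $G_{(\mm;\cdots)}(pn_0,\bn)=G_{(\mm^p;\cdots)}(n_0,\bn)$ then allow the passage to the limit $p\to\infty$ using \autoref{cor_vol_mult} on the algebra side and \autoref{cor_ideals_vol_mult} on the ideal side. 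To salvage your route you would need uniform two-sided polynomial bounds on the slices $\frac{1}{n^{d-1}}\dim_\kk[T]_{(j,n\bn)}$, which in practice means running exactly this kind of level-$p$ sandwich; as written, the limit interchange is unjustified.
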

	\begin{proof}
		For each $p \ge 1$, we define the following standard $\NN^{s+1}$-graded algebra $$
		C^p \; = \; \bigoplus_{n_0,n_1, \ldots, n_s \in \NN^{s+1}}  {\Big[\kk\big[[T]_{\ee_0}, [T]_{p\ee_1},\cdots, [T]_{p\ee_s}\big]\Big]}_{(n_0,n_1p,\ldots,n_sp)}
		$$
		(i.e.,~the regrading of the algebra generated by the graded parts $[T]_{\ee_0}, [T]_{p\ee_1},\cdots, [T]_{p\ee_s}$).
		Notice that the polynomial $G_{C^p}$ of $C^p$ (see \autoref{rem_standard_graded}) equals the polynomial $B_{(\mm;J(1)_p,\ldots,J(s)_p)}$ corresponding to the ideals $\mm$ and $J(1)_p,\ldots,J(s)_p$ (see \autoref{rem_mixed_mult_ideals}), that is, 
		$$
		G_{C^p}(n_0,\bn) \; = \; B_{(\mm;J(1)_p,\ldots, J(s)_p)}(n_0,\bn).
		$$
		Hence $e\left(d_0,\dd; C^p\right) = e_{(d_0,\dd)}\left(\mm \mid J(1)_p,\ldots, J(s)_p\right)$. 
		
		Since the following polynomials are equal
		$$
		G_{C^p}(pn_0,\bn) \; = \; G_{\widetilde{T}_{[p]}}(n_0,\bn),
		$$ 
		by comparing the coefficients, we obtain  $e\big(d_0,\dd; \widetilde{T}_{[p]}\big)  = p^{d_0} e\left(d_0,\dd; C^p\right)$ (recall that $\widetilde{T}_{[p]}$ denotes the regrading of the algebra generated by the graded parts $[T]_{p\ee_0}, [T]_{p\ee_1},\cdots, [T]_{p\ee_s}$).
		Similarly, we have the equality of polynomials
		$$
		G_{(\mm;J(1)_p,\ldots, J(s)_p)}(pn_0,\bn) \; = \; G_{(\mm^p;J(1)_p,\ldots, J(s)_p)}(n_0,\bn) \qquad (\text{see \autoref{rem_mixed_mult_ideals}})
		$$
		that implies $e_{(d_0,\dd)}\left(\mm^p \mid J(1)_p,\ldots, J(s)_p\right) = p^{d_0+1}e_{(d_0,\dd)}\left(\mm \mid J(1)_p,\ldots, J(s)_p\right)$.
		
		By combining the above equalities with \autoref{cor_vol_mult} and \autoref{cor_ideals_vol_mult}, we get 
		\begin{align*}
			e_{(d_0,\dd)}(\bmm \mid \JJ(1),\ldots,\JJ(s)) \;  &=	 \; \lim_{p \to \infty}  \frac{e_{(d_0,\fd)}(\mm^p\mid J(1)_p,\ldots, J(s)_p)}{p^d}  \\
			&=	 \; \lim_{p \to \infty}  \frac{e_{(d_0,\fd)}(\mm\mid J(1)_p,\ldots, J(s)_p)}{p^{d-1-d_0}} \\
			&=	 \; \lim_{p \to \infty}  \frac{e\left(d_0,\dd; C^p\right)}{p^{d-1-d_0}} \\
			&=	 \; \lim_{p \to \infty}  \frac{e\big(d_0,\dd; \widetilde{T}_{[p]}\big)}{p^{d-1}} \\
			&= \; e\big(d_0,\dd; T\big).
		\end{align*}
		Therefore, the result follows.
	\end{proof}
	
	The \emph{analytic spread} of an ideal $I \subset R$ is given by $\ell(I) := \dim\Big(\Rees(I)/\mm\Rees(I)\Big)$, where $\Rees(I)$ denotes the Rees algebra of $I$.
	When the ideal $I = (f_1,\ldots,f_e) \subset R$ is generated by homogeneous elements of the same degree, one has that $\Rees(I)/\mm\Rees(I) \cong \kk[f_1,\ldots,f_e]$ and so $\ell(I) = \dim\left(\kk[f_1,\ldots,f_e]\right)$.
	The following positivity criterion extends the result of \cite[Theorem 4.4]{POSITIVITY}.
	It characterizes the positivity of the mixed multiplicities $e_{(d_0,\dd)}(\bmm \mid \JJ(1),\ldots,\JJ(s))$.
	
	\begin{theorem}
		\label{thm_postiv_grad_fam}
		Assume \autoref{setup_grad_fam_equigen}.
		Let $d_0 \in \NN, \, \dd \in \NN^s$ such that $d_0 + \lvert \dd \rvert = d-1$.
		Then, 
		$$
		e_{(d_0,\dd)}(\bmm \mid \JJ(1),\ldots,\JJ(s)) > 0
		$$ 
		if and only if for all $p\gg 0 $ and $\fJ = \{j_1,\ldots,j_k\} \subseteq \{1,\ldots,s\}$ the inequality 
		$$
		d_{j_1} + \cdots + d_{j_k} \;\le\; \ell\Big(J(j_1)_p \cdots J(j_k)_p \Big) - 1
		$$
		holds.
	\end{theorem}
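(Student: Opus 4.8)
The plan is to deduce the statement from \autoref{thm_postivity_decomp}, applied to the $\NN^{s+1}$-graded algebra $T$ of \autoref{eq_algebra_T}, after transferring the assertion to $T$ by means of \autoref{prop_equal_mixed_mult_T}. By \autoref{prop_equal_mixed_mult_T} we have $e_{(d_0,\dd)}(\bmm \mid \JJ(1),\ldots,\JJ(s)) = e(d_0,\dd;T)$ whenever $d_0+|\dd|=d-1$. Recall that $T$ is a $\kk$-domain of almost integral type with decomposable grading, that $[T]_{\ee_i}\neq 0$ for $0\le i\le s$, and that $\dim(T)=\dim(R)+s=d+s$; hence, in the notation of \autoref{setup_decomp} applied to $T$, the relevant exponent is $\dim(T)-(s+1)=d-1=d_0+|\dd|$, and \autoref{thm_postivity_decomp} gives that $e(d_0,\dd;T)>0$ if and only if, for every subset $\fJ'\subseteq\{0,1,\ldots,s\}$, one has
\[
\sum_{j\in\fJ'} d_j \;\le\; \dim\big(T_{(\fJ')}\big) - \#\fJ' ,
\]
where $d_0$ is included in the sum when $0\in\fJ'$ and $\#\fJ'$ denotes the number of elements of $\fJ'$. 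The remaining task is to match this system of inequalities with the one in the statement.

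First I would observe that every inequality indexed by a subset $\fJ'$ with $0\in\fJ'$ holds automatically. Writing $\fJ'=\{0\}\cup\fJ_0$ with $\fJ_0\subseteq\{1,\ldots,s\}$, the algebra $T_{(\fJ')}$ is precisely the algebra of \autoref{eq_algebra_T} associated with $\bmm$ and the sub-collection $\{\JJ(j)\}_{j\in\fJ_0}$, which again satisfies \autoref{setup_grad_fam_equigen}; hence $\dim\big(T_{(\fJ')}\big)=\dim(R)+\#\fJ_0 = d+\#\fJ'-1$, so the inequality becomes $d_0+\sum_{j\in\fJ_0}d_j\le d-1$, which is true because $d_0+|\dd|=d-1$ and the $d_j$ are non-negative. (For $\fJ'=\emptyset$ the inequality is trivial.) Thus only the nonempty subsets $\fJ=\{j_1,\ldots,j_k\}\subseteq\{1,\ldots,s\}$ carry information, and for these the condition reads $d_{j_1}+\cdots+d_{j_k}\le\dim\big(T_{(\fJ)}\big)-k$.

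The crux of the argument is therefore the identity
\[
\dim\big(T_{(\fJ)}\big) \;=\; \ell\big(J(j_1)_p\cdots J(j_k)_p\big)+k-1 \qquad\text{for } p\gg 0,
\]
for each nonempty $\fJ=\{j_1,\ldots,j_k\}\subseteq\{1,\ldots,s\}$. Using the equally-generated hypothesis together with Nakayama's lemma, exactly as in the discussion following \autoref{eq_algebra_T}, one identifies $T_{(\fJ)}$ with the $\kk$-domain $\bigoplus_{\bm\in\NN^k}[J(j_1)_{m_1}]_{m_1h_{j_1}}\cdots[J(j_k)_{m_k}]_{m_kh_{j_k}}\subset R$, which is of almost integral type and has decomposable grading. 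Let $E:=(T_{(\fJ)})^{(\mathbf{1})}=\bigoplus_n[J(j_1)_n]_{nh_{j_1}}\cdots[J(j_k)_n]_{nh_{j_k}}$ be its diagonal Veronese subalgebra; by the dimension count for Veronese subalgebras recalled after \autoref{eq_veronese_subalg}, $\dim(E)=\dim\big(T_{(\fJ)}\big)-k+1$. Now set $I_p:=J(j_1)_p\cdots J(j_k)_p$, an ideal generated in the single degree $\delta_p:=p(h_{j_1}+\cdots+h_{j_k})$ since each $J(j_l)_p$ is generated in degree $ph_{j_l}$. A direct computation gives $\big[\widehat{E}_p\big]_{np}=\big([E]_p\big)^n=[J(j_1)_p]_{ph_{j_1}}^n\cdots[J(j_k)_p]_{ph_{j_k}}^n=[I_p^n]_{n\delta_p}$, so that $\widehat{E}_p$, after regrading, is isomorphic to the fiber cone $\Rees(I_p)/\mm\Rees(I_p)\cong\bigoplus_n[I_p^n]_{n\delta_p}$; in particular $\dim\big(\widehat{E}_p\big)=\ell(I_p)$. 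Since $\dim\big(\widehat{E}_p\big)=\dim(E)$ for $p\gg0$ (by \autoref{thm_dim_subfinite_alg}, as in the proof of \autoref{thm_limit_graded_domain}), we conclude $\ell(I_p)=\dim(E)=\dim\big(T_{(\fJ)}\big)-k+1$ for $p\gg0$, as claimed.

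Combining these steps, $e(d_0,\dd;T)>0$ if and only if $d_{j_1}+\cdots+d_{j_k}\le\dim\big(T_{(\fJ)}\big)-k$ for every nonempty $\fJ=\{j_1,\ldots,j_k\}\subseteq\{1,\ldots,s\}$; and since for each such $\fJ$ the right-hand side equals $\ell\big(J(j_1)_p\cdots J(j_k)_p\big)-1$ for all sufficiently large $p$, while there are only finitely many subsets $\fJ$ (so that a single threshold $p_0$ works for all of them), this is equivalent to requiring $d_{j_1}+\cdots+d_{j_k}\le\ell\big(J(j_1)_p\cdots J(j_k)_p\big)-1$ for all $p\gg0$ and all $\fJ=\{j_1,\ldots,j_k\}\subseteq\{1,\ldots,s\}$. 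Together with \autoref{prop_equal_mixed_mult_T} this is exactly the assertion of the theorem. I expect the identification of $\widehat{E}_p$ with the fiber cone of $I_p$ to be the main technical point — this is precisely where the equally-generated hypothesis is indispensable, forcing each homogeneous factor $[J(j_l)_p]_{ph_{j_l}}^n$ to lie in the minimal-generator degree $nph_{j_l}$ of $(J(j_l)_p)^n$; the remaining steps are routine bookkeeping with \autoref{thm_postivity_decomp}, \autoref{prop_equal_mixed_mult_T}, and the Veronese dimension formula.
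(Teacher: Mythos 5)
Your proof is correct, but it takes a genuinely different route from the paper's. The paper first reduces to the Noetherian case: by \autoref{prop_equal_mixed_mult_T} and \autoref{cor_vol_mult} the positivity of $e_{(d_0,\dd)}(\bmm\mid\JJ(1),\ldots,\JJ(s))$ is equivalent to the positivity of the classical Bhattacharya mixed multiplicity $e_{(d_0,\dd)}(\mm\mid J(1)_p,\ldots,J(s)_p)$ for $p\gg0$, and then the analytic-spread criterion is invoked as a black box from \cite[Theorem 4.4]{POSITIVITY}. You instead stay at the level of the (non-Noetherian) algebra $T$: you invoke the paper's own \autoref{thm_postivity_decomp} on $T$, obtaining a criterion in terms of $\dim(T_{(\fJ')})$, and then you translate this into the analytic-spread form by identifying the diagonal Veronese $E=(T_{(\fJ)})^{(\mathbf{1})}$ with the directed system of fiber cones $\widehat{E}_p\cong\Rees(I_p)/\mm\Rees(I_p)$, which is exactly where the equally-generated hypothesis enters. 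In effect you re-derive for equally-generated ideals the portion of \cite[Theorem 4.4]{POSITIVITY} that the paper cites, from the more primitive \cite[Theorem B]{POSITIVITY} that underlies \autoref{thm_postivity_decomp}; this makes the argument more self-contained within the paper's own machinery. One small advantage of your route is that it makes the ``for all $p\gg0$'' explicit: it reduces to the stabilization $\dim(\widehat{E}_p)=\dim(E)$ for $p\gg0$, which holds because any transcendence basis $y_1,\ldots,y_e$ of homogeneous elements together with a nonzero $f\in[E]_1$ yields algebraically independent elements $f^{p-b_i}y_i\in[E]_p$ once $p\geq\max b_i$ (since $f^p$, $y_i=f^{b_i-p}z_i$ all lie in the algebraic closure of $\kk(z_1,\ldots,z_e)$). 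The paper's proof handles this point only implicitly. One place you should spell out a little more: the claim $\dim(T_{(\fJ')})=d+\#\fJ'-1$ for $\fJ'\ni 0$ uses, exactly as in the paper's computation of $\dim(T)=d+s$, that the $\mm$-graded piece $[T]_{n_0\ee_0}=[\mm^{n_0}]_{n_0}$ generates a subalgebra of dimension $d$; this is where the implicit standard-gradedness of $R$ (so that $[\mm^{n_0}]_{n_0}=[R]_{n_0}$) is used, and it is worth stating.
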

	\begin{proof}
		By using \autoref{prop_equal_mixed_mult_T} and \autoref{cor_vol_mult}, to show $	e_{(d_0,\dd)}(\bmm \mid \JJ(1),\ldots,\JJ(s)) > 0$, it is enough to show that for $p\gg 0$ one has
		$$
		e_{(d_0,\fd)}\big(\mm\mid J(j_1)_p, \ldots, J(j_k)_p \big) = e\big(d_0,\dd; \widetilde{T}_{[p]}\big) > 0.
		$$
		For any $p > 0$, \cite[Theorem 4.4]{POSITIVITY} implies that the inequality $e_{(d_0,\fd)}\big(\mm\mid J(j_1)_p, \ldots, J(j_k)_p \big)  > 0$ is equivalent to the condition of having $
		d_{j_1} + \cdots + d_{j_k} \;\le\; \ell\Big(J(j_1)_p \cdots J(j_k)_p \Big) - 1
		$
		for all $\fJ = \{j_1,\ldots,j_k\} \subseteq \{1,\ldots,s\}$.
		So, the result follows.
	\end{proof}

	\subsection{Positivity for mixed volumes of convex bodies}	
		\label{subsect_mixed_vol}
	
	In this subsection, by exploiting the known close relation between mixed multiplicities and mixed volumes (see \cite{TRUNG_VERMA_MIXED_VOL, MIXED_VOL_MONOM}), we provide a positivity criteria for mixed volumes.
	Following the notation of \cite{MIXED_VOL_MONOM}, we use the setup  below.
	
	\begin{setup}
		\label{setup_mixed_volumes}
		Let $\bK = (K_1,\ldots,K_s)$ be a sequence of convex bodies in $\RR_{\ge 0}^d$.
		Let $\kk$ be a field, $R$ be the polynomial ring $R = \kk[x_1,\ldots,x_{d+1}]$, and  $\mm=\left(x_1,\ldots,x_{d+1}\right)$.
		Let 
		$
		\pi_1 : \RR^{d+1} \rightarrow \RR^{d} , \, (\alpha_1,\ldots,\alpha_d,\alpha_{d+1}) \mapsto (\alpha_1,\ldots,\alpha_d)
		$ be the projection into the first $d$ factors. 
		Let $\pi : \RR^{d+1} \rightarrow \RR$ the linear map 
		$
		\pi : \RR^{d+1} \rightarrow \RR, \, (\alpha_1,\ldots,\alpha_d,\alpha_{d+1}) \mapsto \alpha_1+\cdots+\alpha_d+\alpha_{d+1}.
		$			
		For $1 \le i \le s$, choose $h_i \in \NN$ a positive integer such that $K_i \subset \pi_1\big(\pi^{-1}(h_i) \cap \RR_{\ge 0}^{d+1}\big)$.
		The corresponding \emph{homogenization} of $K_i$ (with respect to $h_{i}$) is defined as the convex body 
		$$
		\widetilde{K_i} \;:=\; \left(K_i \times \RR\right) \cap \pi^{-1}(h_i) \; \subset \; \RR_{\ge 0}^{d+1}.
		$$
		Let $C_{K_i}$ be the corresponding cone $C_{K_i} := \text{Cone}(\widetilde{K_i})$.
		Consider the semigroup $S_{K_i} \subset \NN^{d+1}$ given by
		$$
		S_{K_i}:= C_{K_i} \cap \NN^{d+1} \cap \left(\bigcup_{k=1}^\infty \pi^{-1}\left(kh_i\right)\right).
		$$
		We consider the (not necessarily Noetherian) graded family of monomial ideals
		\begin{equation*}
			\JJ(i) \;:=\; \{J(i)_n\}_{n \in \NN},  \quad\text{where }\quad J(i)_n \;:=\; \Big(\bx^\bm \mid \bm \in S_{K_i} \text{ and } \lvert \bm \rvert = nh_{i}\Big) \subset R.  
		\end{equation*}
		Let $\bmm$ be the filtration $\bmm := \{\mm^n\}_{n \in \NN}$.
	\end{setup}

	As a direct consequence of our previous developments, we recover a classical criterion for the positivity of mixed volumes (see \cite[Theorem 5.1.8]{SCHNEIDER}).
	
	\begin{theorem}
		\label{thm_mixed_vol}
		Assume \autoref{setup_mixed_volumes}.
		Let $\dd \in \NN^s$ such that $ \lvert \dd \rvert = d$.
		Then, $\MV_d\left(\bK_{ \dd}\right)>0$ if and only if for each $\fJ = \{j_1,\ldots,j_k\} \subseteq \{1,\ldots,s\}$ the inequality 
		$$
		d_{j_1} + \cdots + d_{j_k} \;\le\; \dim_\RR\left(\sum_{i=1}^k K_{j_i}\right)
		$$
		holds. 
	\end{theorem}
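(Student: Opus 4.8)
The plan is to reduce the statement to the positivity criterion \autoref{thm_postiv_grad_fam} for mixed multiplicities of graded families of ideals. First I would check that \autoref{setup_mixed_volumes} is a particular instance of \autoref{setup_grad_fam_equigen}: since each $S_{K_i}$ is a subsemigroup of $\NN^{d+1}$, the $\JJ(i)=\{J(i)_n\}_{n\in\NN}$ are graded families of non-zero monomial ideals of $R=\kk[x_1,\ldots,x_{d+1}]$, and by construction $J(i)_n$ is generated in the single degree $nh_i$, so \autoref{eq_linear_growth} holds with $\beta=\max_i h_i$. Here $\dim(R)=d+1$, so the ambient dimension ``$d$'' in \autoref{setup_grad_fam_ideals} equals $d+1$, and the relevant mixed multiplicities are $e_{(d_0,\dd)}(\bmm\mid\JJ(1),\ldots,\JJ(s))$ with $d_0+|\dd|=d$; imposing $|\dd|=d$ forces $d_0=0$. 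I would then invoke the known dictionary between mixed volumes and mixed multiplicities in the monomial setting (see \cite{TRUNG_VERMA_MIXED_VOL} and \cite{MIXED_VOL_MONOM}): $\MV_d(\bK_\dd)$ agrees, up to a fixed positive constant, with $e_{(0,\dd)}(\bmm\mid\JJ(1),\ldots,\JJ(s))$, so $\MV_d(\bK_\dd)>0$ if and only if $e_{(0,\dd)}(\bmm\mid\JJ(1),\ldots,\JJ(s))>0$. Applying \autoref{thm_postiv_grad_fam} with $d_0=0$, the latter holds if and only if for all $p\gg0$ and all $\fJ=\{j_1,\ldots,j_k\}\subseteq\{1,\ldots,s\}$ one has $d_{j_1}+\cdots+d_{j_k}\le\ell\big(J(j_1)_p\cdots J(j_k)_p\big)-1$.

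It then remains to compute the analytic spread of the monomial ideal $J(j_1)_p\cdots J(j_k)_p$ for $p\gg0$. It is generated by monomials all of degree $p(h_{j_1}+\cdots+h_{j_k})$, so its fiber cone $\Rees(I)/\mm\Rees(I)$ is the affine semigroup ring generated by those monomials, whence $\ell\big(J(j_1)_p\cdots J(j_k)_p\big)$ equals one plus the dimension of the affine hull of the exponent vectors of its minimal generators. These exponent vectors are the sums $\bm^{(1)}+\cdots+\bm^{(k)}$ with $\bm^{(i)}\in S_{K_{j_i}}$ and $|\bm^{(i)}|=ph_{j_i}$, hence they lie in the dilated polytope $p\big(\widetilde{K_{j_1}}+\cdots+\widetilde{K_{j_k}}\big)$; for $p$ large they affinely span the same affine subspace as $\widetilde{K_{j_1}}+\cdots+\widetilde{K_{j_k}}$ itself (lattice points of a large dilate of a rational polytope affinely span its affine hull). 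Since the restriction of $\pi_1$ to the hyperplane $\pi^{-1}(h_{j_1}+\cdots+h_{j_k})$ is a linear isomorphism onto $\RR^d$ carrying each $\widetilde{K_{j_i}}$ onto $K_{j_i}$ and commuting with Minkowski sums, this dimension equals $\dim_\RR\big(\sum_{i=1}^k K_{j_i}\big)$. Therefore $\ell\big(J(j_1)_p\cdots J(j_k)_p\big)-1=\dim_\RR\big(\sum_{i=1}^k K_{j_i}\big)$ for $p\gg0$, and the criterion of \autoref{thm_postiv_grad_fam} becomes exactly the asserted inequalities.

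The main obstacle I expect is the last paragraph: keeping careful track of the homogenization parameters $h_i$, verifying that $\pi_1$ preserves the dimensions in question, and justifying that the analytic spread of $J(j_1)_p\cdots J(j_k)_p$ is eventually independent of $p$ (equivalently, that the exponent vectors of its generators affinely span the full affine hull of the Minkowski sum once $p$ is large). The reduction to mixed multiplicities and to \autoref{thm_postiv_grad_fam} is essentially formal once the cited volume/multiplicity correspondence is in place.
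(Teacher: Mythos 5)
Your proposal is correct and follows essentially the same route as the paper's proof: both reduce via the identity $\MV_d(\bK_\dd) = e_{(0,\dd)}(\bmm \mid \JJ(1),\ldots,\JJ(s))$ from \cite{MIXED_VOL_MONOM} to the analytic-spread positivity criterion of \autoref{thm_postiv_grad_fam} (with $d_0=0$, since $\dim R = d+1$), and then identify $\ell\big(J(j_1)_p\cdots J(j_k)_p\big)-1$ with $\dim_\RR\big(\sum_{i=1}^k K_{j_i}\big)$ for $p\gg 0$ by computing the dimension of the convex hull of the exponent vectors of the generators. The only cosmetic differences are that you state the volume--multiplicity correspondence "up to a positive constant" where the paper cites the exact equality, and you spell out the approximation of $\sum K_{j_i}$ by the fiber-cone polytopes in a bit more detail than the paper (which just asserts the approximation); neither affects the argument.
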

	\begin{proof}
		From \cite[Theorem 5.4]{MIXED_VOL_MONOM} we have that $\MV_d\left(\bK_{ \dd}\right) = 	e_{(0,\dd)}(\bmm \mid \JJ(1),\ldots,\JJ(s))$.
		Hence, \autoref{thm_postiv_grad_fam} implies that $\MV_d\left(\bK_{ \dd}\right)>0$ if and only if for all $p\gg 0 $ and $\fJ = \{j_1,\ldots,j_k\} \subseteq \{1,\ldots,s\}$ the inequality 
		$$
		d_{j_1} + \cdots + d_{j_k} \;\le\; \ell\Big(J(j_1)_p \cdots J(j_k)_p \Big) - 1
		$$
		holds.
		For each ideal $J(j_1)_p \cdots J(j_k)_p$ there corresponds the convex body 
		$$
		K\big(J(j_1)_p \cdots J(j_k)_p\big) \;:=\; \pi_1\left(\conv
		\Big\lbrace
		\bm \in \NN^{d+1}\mid \bx^\bm \in  \big[J(j_1)_p \cdots J(j_k)_p\big]_{ph_{j_1}+\cdots+ph_{j_k}}  
		\Big\rbrace
		\right) \; \subset \; \RR_{\ge 0}^d.
		$$
		Notice that $\ell(J(j_1)_p \cdots J(j_k)_p) - 1 = \dim_\RR(K(J(j_1)_p \cdots J(j_k)_p))$.
		For $p \gg 0$, one obtains the equality $\dim_\RR(K(J(j_1)_p \cdots J(j_k)_p)) = \dim_\RR(\sum_{i=1}^k K_{j_i})$, as the convex bodies $K(J(j_1)_p \cdots J(j_k)_p)$ approximate the Minkowski sum $\sum_{i=1}^k K_{j_i}$.
		Therefore, the proof of the theorem is complete.
	\end{proof}

	\section{Multigraded linear series}\label{section_mult_lin_series}
	
	In this section, we apply our main results from the previous sections to the case of multigraded linear series. 
	Here, we extend the results of \cite{lazarsfeld09} on the multigraded linear series. 
	Our main improvement is the fact that we deal with valuations with leaves of bounded dimension instead of restricting to valuations with only one-dimensional leaves.
	Furthermore, with the family of multigraded linear series with decomposable grading, we provide an interesting family for which the volume function is a polynomial.

	Following the notation of \cite{HARTSHORNE}, we say that $X$ is a \emph{variety} over a field $\kk$ if $X$ is a reduced and irreducible separated scheme of finite type over $\kk$.

	Throughout this section the following setup is used. 
	
	\begin{setup}
		\label{setup_linear_series}
		Let $\kk$ be a field and $X$ be a proper variety over $\kk$.
		Let $D_1,\ldots,D_s$ be a sequence of Cartier divisors on $X$.
	\end{setup}
	
	We consider the section ring of the divisors $D_1,\ldots,D_s$, which given by 
	$$
	\SSS(D_1,\ldots,D_s) \; := \; \bigoplus_{(n_1,\ldots,n_s) \in \NN^s} \HH^0\big(X, \OO(n_1D_1+\cdots+n_sD_s)\big).
	$$
	Notice that $	\SSS(D_1,\ldots,D_s)$ is by construction an $\NN^s$-graded $\kk$-algebra.
	To simplify notation, for any $\bn = (n_1,\ldots,n_s) \in \NN^s$, we denote the divisor $n_1D_1+\cdots + n_sD_s$ by 
	$$
	\bn  D \; := \; n_1D_1+\cdots + n_sD_s.
	$$
	The following basic result shows that the section ring of $D_1,\ldots,D_s$ is an $\NN^s$-graded algebra of almost integral type (in the sense of \autoref{def_multigrad_integral}).
	For the single graded case, see \cite[Theorem 3.7]{KAVEH_KHOVANSKII}.
	
	\begin{proposition}
		\label{prop_sect_ring}
		$\SSS(D_1,\ldots,D_s)$ is an $\NN^s$-graded algebra of almost integral type.
	\end{proposition}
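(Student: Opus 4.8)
The plan is to follow the strategy of the single-graded statement \cite[Theorem 3.7]{KAVEH_KHOVANSKII}: first realize $\SSS(D_1,\ldots,D_s)$ as an $\NN^s$-graded $\kk$-subalgebra of $R[t_1,\ldots,t_s]$ with $R$ a $\kk$-domain, and then squeeze it between section rings attached to very ample divisors, for which integrality can be read off from a multiprojective embedding. For the first point, pick for each $i$ a nonzero rational section $s_{0,i}$ of $\OO_X(D_i)$; then $s_{0,\bn}:=\prod_i s_{0,i}^{n_i}$ is a rational section of $\OO_X(\bn D)$, where $\bn D:=n_1D_1+\cdots+n_sD_s$, and $s\mapsto (s/s_{0,\bn})\,t_1^{n_1}\cdots t_s^{n_s}$ embeds $\HH^0(X,\OO_X(\bn D))$ into $\kk(X)[t_1,\ldots,t_s]$ in a way compatible with the multiplication maps, exhibiting $\SSS(D_1,\ldots,D_s)\subseteq R[t_1,\ldots,t_s]$ with $R=\kk(X)$ a field.

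Next I would reduce to the case where $X$ is projective. By Chow's lemma there is a projective birational $\pi\colon X'\to X$ with $X'$ a projective variety; pullback gives injections $\HH^0(X,\OO_X(\bn D))\hookrightarrow\HH^0(X',\OO_{X'}(\pi^*\bn D))$ which, using the rational sections $\pi^*s_{0,i}$, are compatible with the embeddings above, so $\SSS(D_1,\ldots,D_s)\subseteq\SSS(\pi^*D_1,\ldots,\pi^*D_s)$ inside $\kk(X)[t_1,\ldots,t_s]=\kk(X')[t_1,\ldots,t_s]$, and it suffices to treat the latter. Assuming now $X$ projective, for each $i$ I would choose a very ample divisor $H_i$ such that $H_i-D_i$ is linearly equivalent to an effective divisor $E_i$ (for a very ample $A_i$ and $m\gg 0$, $\OO_X(mA_i-D_i)$ is globally generated, so $mA_i-D_i\sim E_i\geq 0$ and one may take $H_i:=D_i+E_i$, with a section $\theta_i\in\HH^0(X,\OO_X(E_i))$ cutting out $E_i$). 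Multiplication by $\theta_\bn:=\prod_i\theta_i^{n_i}$ embeds $\HH^0(X,\OO_X(\bn D))$ into $\HH^0(X,\OO_X(\bn H))$, and, using the rational sections $s_{0,i}\otimes\theta_i$ for the $H_i$, these embeddings are compatible with products and with the ambient embedding into $\kk(X)[t_1,\ldots,t_s]$; hence $\SSS(D_1,\ldots,D_s)\subseteq B:=\SSS(H_1,\ldots,H_s)$, and it remains to prove that $B$ is of integral type in the sense of \autoref{def_multigrad_integral}.

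For the last step, each very ample $H_i$ gives a closed immersion $\phi_i\colon X\hookrightarrow\mathbb{P}^{N_i}:=\mathbb{P}\big(\HH^0(X,\OO_X(H_i))^{\vee}\big)$ with $\phi_i^*\OO(1)=\OO_X(H_i)$, and the product morphism $\phi=(\phi_1,\ldots,\phi_s)\colon X\to\mathbb{P}^{N_1}\times\cdots\times\mathbb{P}^{N_s}$ is a proper monomorphism, hence a closed immersion onto a closed subvariety $Y\cong X$. Writing $\OO_Y(\bn)=\OO(n_1)\boxtimes\cdots\boxtimes\OO(n_s)|_Y$, one has $\phi^*\OO_Y(\bn)=\OO_X(\bn H)$, so $B=\bigoplus_{\bn\in\NN^s}\HH^0(Y,\OO_Y(\bn))$. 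I would then invoke the multigraded analogue of Serre's finiteness theorem: for any coherent sheaf $\mathcal G$ on $\mathbb{P}^{N_1}\times\cdots\times\mathbb{P}^{N_s}$ the $\NN^s$-graded module $\bigoplus_{\bn}\HH^0(\mathcal G(\bn))$ is finitely generated over the multigraded polynomial ring $\bar R=\kk[x_{i,j}:1\le i\le s,\ 0\le j\le N_i]$ with $\deg(x_{i,j})=\ee_i$. Applied to $\mathcal G$ the pushforward of $\OO_Y$, this shows $B$ is a finite module over the image of $\bar R$, and since $\HH^0(\mathbb{P}^{N_i},\OO(1))\xrightarrow{\sim}\HH^0(X,\OO_X(H_i))=[B]_{\ee_i}$ (complete linear system), that image is exactly $\kk\big[[B]_{\ee_1},\ldots,[B]_{\ee_s}\big]$. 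Hence $B$ is a finitely generated $\kk$-algebra and a finite module over $\kk[[B]_{\ee_1},\ldots,[B]_{\ee_s}]$, i.e.\ of integral type, and $\SSS(D_1,\ldots,D_s)\subseteq B$ is of almost integral type.

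Beyond the bookkeeping of compatible embeddings, the only substantive input is the multigraded Serre finiteness used in the last paragraph; this is the multiprojective counterpart of the classical fact exploited in \cite[\S 3]{KAVEH_KHOVANSKII}, and if one prefers not to cite it directly it can be deduced by induction on $s$ from the single-graded case applied in the last coordinate. I expect this to be the main (though standard) obstacle; the reductions via Chow's lemma and via $\SSS(D_1,\ldots,D_s)\subseteq\SSS(H_1,\ldots,H_s)$ are routine.
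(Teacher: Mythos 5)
Your proof is correct, but it takes a genuinely different route from the paper's. The paper also begins with Chow's lemma to reduce to $X$ normal projective, but then finds a \emph{single} very ample divisor $H$ with $D_i \le H$ for every $i$, so that $\OO(\bn D) \subseteq \OO(|\bn|H)$; this embeds $\SSS(D_1,\ldots,D_s)$ into the $\NN^s$-graded algebra $B = \bigoplus_{\bn\in\NN^s}\HH^0(X,\OO(|\bn|H))$, which is of integral type because the singly graded section ring $\SSS(H)$ is (Hartshorne, Exercise II.5.14). That way, only the classical singly graded Serre finiteness is needed, and the $\NN^s$-graded overring is obtained purely formally by re-indexing $\SSS(H)$ along $\bn\mapsto|\bn|$. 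You instead choose one very ample $H_i\ge D_i$ per factor, embed $X$ into a product of projective spaces, and invoke a multigraded Serre finiteness theorem for $\SSS(H_1,\ldots,H_s)$. Both arguments work; yours keeps a genuinely multigraded bounding algebra and makes the embedding into $\kk(X)[t_1,\ldots,t_s]$ explicit (which the paper leaves tacit), at the cost of needing the multiprojective finiteness statement --- which, as you note, one can deduce by induction on $s$ from the singly graded case. The paper's choice of a single $H$ sidesteps that extra input entirely.
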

	\begin{proof}
		First, by Chow's lemma (see, e.g.~\cite[Theorem 13.100]{GORTZ_WEDHORN} or \cite[Exercise II.4.10]{HARTSHORNE}), there exists a proper birational morphism $\pi : X' \rightarrow X$ where $X'$ is a normal projective variety over $\kk$.
		Since for all $\bn \in \NN^s$ we have $\HH^0\big(X, \OO(\bn D)\big) \hookrightarrow \HH^0\big(X', \pi^*\OO(\bn D)\big)$, it suffices to assume that $X$ is a normal projective variety over $\kk$, and we do so.
		
		We can find a very ample divisor $H$ on $X$ such that $D_i \le H$, and so $\OO(\bn D) \subseteq \OO(|\bn|H)$ for all $\bn \in \NN^s$ (see, e.g.,~\cite[Example 1.2.10]{LAZARSFELD} and \cite[Theorem 3.9]{KAVEH_KHOVANSKII}).
		From the fact that the section ring $\SSS(H) = \bigoplus_{n=0}^\infty \HH^0(X, \OO(nH))$ is of integral type (see \cite[Exercise II.5.14]{HARTSHORNE}), we obtain that $\SSS(D_1,\ldots,D_s)$ is an $\NN^s$-graded algebra of almost integral type.
	\end{proof}

	Here, we study multigraded linear series as defined below. 
	
	\begin{definition}
		A \emph{multigraded linear series} associated to the divisors $D_1,\ldots,D_s$ is an $\NN^s$-graded $\kk$-subalgebra $W$ of the section ring $\SSS(D_1,\ldots,D_s)$. 
	\end{definition}
	
	Let $W \subseteq \SSS(D_1,\ldots,D_s)$ be a multigraded linear series and suppose that $[W]_{\ee_i} \neq 0$ for all $1 \le i \le s$.
	By \autoref{prop_sect_ring}, it follows that $W$ is an $\NN^s$-graded algebra of almost integral type. 
	The \emph{Kodaira-Itaka dimension} of $W$ is denoted and given by 
	$$
	\kappa(W) \; := \; \dim(W) - s,
	$$
	where as before $\dim(W)$ denotes the Krull dimension of the $\NN^s$-graded algebra $W$ of almost integral type.
	This value was the correct asymptotic for the volume function of an $\NN^s$-graded algebra of almost integral type (see \autoref{eq_function_A}). 
	Additionally, notice that this agrees with the definition used in \cite[Section 7]{cutkosky2014} for the case of singly graded linear series.
	Following \autoref{def_glob_NO}, let $\Delta(W)$ be the global Newton-Okounkov body of $W$.
	As in \autoref{def_uniform_A}, consider the integers $\ind(W)$ and $\ell_W$.
	
	Our main result regarding multigraded linear series is the theorem below, and it follows rather easily from our previous developments.
	
	\begin{theorem}
		\label{thm-linear-series}
		Assume \autoref{setup_linear_series}. 
		Let $W \subset \SSS(D_1,\ldots,D_s)$ be a multigraded linear series and suppose that $[W]_{\ee_i} \neq 0$ for all $1 \le i \le s$.				
		Then, the following statements hold: 
		\begin{enumerate}[(i)]
			\item The volume function 
			$$
			F_W(\bn) \; := \; \lim_{n \to \infty} \frac{\dim_{\kk}\big([W]_{n\bn}\big)}{n^{\kappa(W)}}
			$$
			of $W$ is well-defined for all $\bn \in \ZZ_+^s$.
			\item There exists a unique continuous function that is homogeneous of degree $\kappa(W)$ and log-concave and that extends the volume function $F_W(\bn)$ of part (i) to the positive orthant $\RR_{\geq 0}^s$.
			This function is given by 
			$$
			F_W \, :\, \RR_{\geq 0}^s\to \RR, \qquad x \,\mapsto\, \ell_W \cdot \frac{\Vol_{\kappa(W)}\big(\Delta(W)_x\big)}{\ind(W)}.
			$$
			\item If $W$ has a decomposable grading, then there exists a homogeneous polynomial $G_W(\bn) \in \RR[n_1,\ldots,n_s]$ of degree $\kappa(W)$ with non-negative real coefficients such that 
			$$
			F_W(\bn)  \;=\;  G_W(\bn)
			$$  for all $\bn \in \ZZ_+^s$.
		\end{enumerate}
	\end{theorem}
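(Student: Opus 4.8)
The plan is to reduce all three statements to the general results on $\NN^s$-graded algebras of almost integral type proved in \autoref{sect_mult_grad} and \autoref{sect_decomp_grad}, applied to the algebra $A = W$. Nothing new needs to be estimated; the work is entirely in checking that $W$ fits the hypotheses of \autoref{setup_mult_grad} (and, in part (iii), of \autoref{setup_decomp}).

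First I would realize $W$ concretely inside a polynomial ring over a $\kk$-domain. Since $X$ is an integral variety, for a Cartier divisor $D$ one has the canonical identification $\HH^0(X,\OO(D)) = \{\, f\in \kk(X)^{\times} : \operatorname{div}(f)+D\ge 0\,\}\cup\{0\}\subseteq \kk(X)$, under which the multiplication maps of the section ring become ordinary multiplication in the field $\kk(X)$; sending $f\in \HH^0(X,\OO(\bn D))$ to $f\,t_1^{n_1}\cdots t_s^{n_s}$ exhibits $\SSS(D_1,\ldots,D_s)$ as an $\NN^s$-graded $\kk$-subalgebra of $\kk(X)[t_1,\ldots,t_s]$. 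Hence $W\subseteq \SSS(D_1,\ldots,D_s)$ is a domain; it is of almost integral type by \autoref{prop_sect_ring}, since a graded $\kk$-subalgebra of an algebra of almost integral type is again of that type; $\kk(X)$ is a $\kk$-domain; and $[W]_{\ee_i}\neq 0$ by hypothesis. Invoking \autoref{rem_finite_gen} to replace $\kk(X)$ by a finitely generated reduced $\kk$-subalgebra, I would conclude that \autoref{setup_mult_grad} is satisfied with $A=W$, and \autoref{setup_decomp} as well when $W$ has a decomposable grading. I would also note that $q := \dim(W)-s = \kappa(W)$ is precisely the exponent appearing in the definition of $F_W$, and that $q\ge 0$ because $[W]_{\ee_i}\neq 0$ for all $i$ forces $\dim(W) = s + \trdeg_\kk(W_{(0)})\ge s$, exactly as in the computation following \autoref{eq_veronese_subalg}.

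With this setup in place the three parts follow immediately. Part (i) is the assertion that the limit in \autoref{eq_function_A} with $A = W$ exists for every $\bn\in\ZZ_+^s$; this is obtained by applying \autoref{thm_limit_graded_domain} to the Veronese subalgebra $W^{(\bn)}$, as done in \autoref{sect_mult_grad}. Part (ii) is \autoref{cor-global} applied to $A = W$: it yields the unique continuous extension of $F_W$ to $\RR_{\ge 0}^s$ that is homogeneous of degree $q$ and log-concave, given by $x\mapsto \ell_W\cdot \Vol_q(\Delta(W)_x)/\ind(W)$, where $\Delta(W)$ is the global Newton--Okounkov body of \autoref{def_glob_NO} and $\ind(W),\ell_W$ are as in \autoref{def_uniform_A}. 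Part (iii), assuming $W$ has a decomposable grading, is \autoref{thm_poly_decomp} applied to $A = W$, producing the homogeneous polynomial $G_W(\bn)\in\RR[n_1,\ldots,n_s]$ of degree $q$ with non-negative real coefficients and $F_W(\bn)=G_W(\bn)$ for all $\bn\in\ZZ_+^s$. Since each part is a direct specialization of an earlier theorem, I do not expect any genuine obstacle; the only point demanding care is the first step, namely the concrete embedding of $W$ into $R[t_1,\ldots,t_s]$ for a finitely generated $\kk$-domain $R$ together with the verification that the Kodaira--Iitaka dimension $\kappa(W)$ matches the Krull-dimension normalization $q = \dim(W)-s$ built into \autoref{eq_function_A}. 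I expect that bookkeeping, rather than any analytic input, to be the main thing to get right.
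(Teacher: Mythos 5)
Your proposal is correct and follows the same route as the paper: the paper's own proof of \autoref{thm-linear-series} is just the three one-line citations to \autoref{eq_function_A}, \autoref{cor-global}, and \autoref{thm_poly_decomp}, exactly as you invoke. The only difference is that you spell out explicitly the (implicitly assumed) embedding of $W$ into $\kk(X)[t_1,\ldots,t_s]$, the verification via \autoref{prop_sect_ring} and \autoref{rem_finite_gen} that \autoref{setup_mult_grad} (resp.~\autoref{setup_decomp}) is satisfied, and the identification $\kappa(W)=\dim(W)-s=q$; this is exactly the bookkeeping the paper suppresses, and your treatment of it is accurate.
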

	\begin{proof}
		(i) The function is obtained as in \autoref{eq_function_A}.
		
		(ii) It follows from \autoref{cor-global}.
		
		(iii) It follows from \autoref{thm_poly_decomp}.
	\end{proof}

	For a closed subscheme  $Y \subset  \PP_\kk^{m_1} \times_\kk \cdots \times_\kk \PP_\kk^{m_s}$ of a multiprojective space over $\kk$, we can consider the \emph{multidegrees} of $Y$.
	These fundamental invariants go back to the work of van~der~Waerden \cite{VAN_DER_WAERDEN}.
	If $S$ is a standard $\NN^s$-graded algebra that coincides with the multihomogeneous coordinate ring of $Y$, then for each $\dd \in \NN^s$ with $\lvert \dd \rvert = \dim(Y)$, one way of defining the multidegree of $Y$ of type $\dd$ is by setting 
	$$
	\deg(\dd; Y) \; := \; e(\dd;S).
	$$
	The following theorem is an extension of \cite[Theorem 3.3]{KAVEH_KHOVANSKII} to a multigraded setting.
	It expresses the mixed multiplicities of a multigraded linear series with decomposable grading in terms of the multidegrees of the image of the corresponding Kodaira rational maps.
	
	\begin{theorem}
	    \label{thm_Kodaira_multigrad}
		Assume \autoref{setup_linear_series}. 
		Let $W \subset \SSS(D_1,\ldots,D_s)$ be a multigraded linear series with decomposable grading and suppose that $[W]_{\ee_i} \neq 0$ for all $1 \le i \le s$.	
		For each $p \ge 1$, consider the corresponding Kodaira rational map 
		$$
		\Pi_{[W]_{(p,\ldots,p)}} \; : \; X \;\dashrightarrow \; \PP_\kk^{\dim_{\kk}([W]_{p\ee_1}) - 1} \times_\kk \cdots \times_\kk \PP_\kk^{\dim_{\kk}([W]_{p\ee_s}) - 1}.
		$$			
		Let $Y_{[W]_{(p,\ldots,p)}}$ be the closure of the image of $\Pi_{[W]_{(p,\ldots,p)}}$.
		Then, for each  $\dd = (d_1,\ldots,d_s) \in \NN^s$ with $|\dd| = \kappa(W)$, we have the following equalities
		$$
		e(\dd; W) 
		\; = \; 
		\lim_{p \to \infty}  \frac{
			\deg\big(\dd;Y_{[W]_{(p,\ldots,p)}}\big)
		}{p^{\kappa(W)}}
		\; = \; 
		\sup_{p \in \ZZ_+}  \frac{
			\deg\big(\dd;Y_{[W]_{(p,\ldots,p)}}\big)
		}{p^{\kappa(W)}}.
		$$
	\end{theorem}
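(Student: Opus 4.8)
The plan is to deduce everything from \autoref{cor_vol_mult}, which already gives
$e(\dd;W)=\lim_{p\to\infty}e(\dd;\widetilde{W}_{[p]})/p^{\kappa(W)}=\sup_{p\in\ZZ_+}e(\dd;\widetilde{W}_{[p]})/p^{\kappa(W)}$,
so the only genuine task is to match, for each fixed $p$, the multidegree $\deg(\dd;Y_{[W]_{(p,\ldots,p)}})$ with the mixed multiplicity $e(\dd;\widetilde{W}_{[p]})$. First I would record the harmless preliminaries: since $X$ is a variety, a nonzero product of global sections of $\OO(\bn D)$ and $\OO(\bm D)$ is a nonzero rational function times a section of $\OO((\bn+\bm)D)$, so $\SSS(D_1,\ldots,D_s)$, and hence $W$, is a domain; thus $W$ is an $\NN^s$-graded domain of almost integral type with decomposable grading satisfying \autoref{setup_decomp}, and \autoref{cor_vol_mult} (and \autoref{rem_standard_graded}) apply. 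Moreover, for each $p\ge 1$, $[W]_{p\ee_i}\supseteq[W]_{\ee_i}^p\neq 0$, the subalgebra $\widehat{W}_{[p]}=\kk\big[[W]_{p\ee_1},\ldots,[W]_{p\ee_s}\big]\subseteq W$ is a domain, and so is its regrading $\widetilde{W}_{[p]}$, a standard $\NN^s$-graded $\kk$-domain.

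The core step is the geometric identification. Fix $p$ and choose $\kk$-bases $s_{i,0},\ldots,s_{i,m_i}$ of $[W]_{p\ee_i}$ with $m_i=\dim_\kk[W]_{p\ee_i}-1$; these are the sections defining $\Pi_{[W]_{(p,\ldots,p)}}$. Sending the $i$-th block $x_{i,0},\ldots,x_{i,m_i}$ of multigraded polynomial variables to $s_{i,0},\ldots,s_{i,m_i}$ yields a surjection of standard $\NN^s$-graded $\kk$-algebras from $\kk[x_{i,j}]$ onto the subalgebra of $\SSS(D_1,\ldots,D_s)$ generated by all the $s_{i,j}$; its degree-$\bn$ component is $[W]_{p\ee_1}^{n_1}\cdots[W]_{p\ee_s}^{n_s}$, which by decomposability of the grading equals $[\widehat{W}_{[p]}]_{p\bn}=[\widetilde{W}_{[p]}]_{\bn}$. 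Hence $\widetilde{W}_{[p]}$ is, up to saturation, the multihomogeneous coordinate ring of the closure $Y_{[W]_{(p,\ldots,p)}}$ of the image of $\Pi_{[W]_{(p,\ldots,p)}}$; since saturation leaves the multigraded Hilbert polynomial unchanged in large degrees, it leaves all multidegrees unchanged, so $\deg(\dd;Y_{[W]_{(p,\ldots,p)}})=e(\dd;\widetilde{W}_{[p]})$ whenever $|\dd|=\kappa(W)$. For this last identity one checks both cases: for $p\gg 0$ one has $\dim\widetilde{W}_{[p]}=\dim W$ by \autoref{thm_dim_subfinite_alg} and decomposability, hence $\dim Y_{[W]_{(p,\ldots,p)}}=\kappa(W)=|\dd|$ and both numbers are the leading coefficient data of the multigraded Hilbert polynomial; for the remaining finitely many $p$, that polynomial has degree $<|\dd|$, so $e(\dd;\widetilde{W}_{[p]})=0=\deg(\dd;Y_{[W]_{(p,\ldots,p)}})$. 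Substituting $e(\dd;\widetilde{W}_{[p]})=\deg(\dd;Y_{[W]_{(p,\ldots,p)}})$ into \autoref{cor_vol_mult} (applied with $A=W$, $q=\kappa(W)$) gives the two asserted equalities simultaneously.

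I expect the main obstacle to be the identification in the second paragraph: one must argue carefully that the homogeneous ideal of $Y_{[W]_{(p,\ldots,p)}}$ inside $\PP_\kk^{m_1}\times_\kk\cdots\times_\kk\PP_\kk^{m_s}$ is, up to saturation, the kernel of $\kk[x_{i,j}]\to\SSS(D_1,\ldots,D_s)$ — this uses that $X$ is reduced and irreducible so that the $s_{i,j}$ evaluate as rational functions and the Kodaira map's image is the multiprojective variety cut out by the multihomogeneous relations among the $s_{i,j}$ — and then that passing to a graded algebra with the same high-degree pieces does not alter any $e(\dd;-)$ with $|\dd|=\dim Y$. Everything else is formal bookkeeping on top of \autoref{cor_vol_mult} and \autoref{rem_standard_graded}.
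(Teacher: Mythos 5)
Your proposal is correct and follows the same route as the paper: apply \autoref{cor_vol_mult} to $W$ after identifying $\deg\big(\dd;Y_{[W]_{(p,\ldots,p)}}\big)$ with $e\big(\dd;\widetilde{W}_{[p]}\big)$. The paper's own proof is essentially the single line ``apply \autoref{cor_vol_mult} to $W$''; you have made explicit the geometric identification it tacitly relies on --- that $\widetilde{W}_{[p]}$ is, up to saturation, the multihomogeneous coordinate ring of $Y_{[W]_{(p,\ldots,p)}}$, that saturation does not alter the multigraded Hilbert polynomial and hence leaves every $e(\dd;\text{--})$ unchanged, and that for the finitely many $p$ with $\dim Y_{[W]_{(p,\ldots,p)}}<\kappa(W)$ both sides vanish --- which is a genuine gap the paper leaves for the reader.
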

	\begin{proof}
		The result follows by applying \autoref{cor_vol_mult} to the multigraded linear series $W$ that has a decomposable grading.
	\end{proof}

	Finally, below we have an example where the section ring has a decomposable grading.

	\begin{example}
		Let $C$ be a smooth projective algebraic curve of genus $g$ over an algebraically closed field $\kk$.  Let $D_1,\ldots, D_s$ be divisors on $C$ with $\deg(D_i)\geq 2g+1$. Then by \cite[Theorem 6]{mumford}, the tensor product map
		\[
		\HH^0\big(X, \OO(n_1D_1)\big)\otimes \cdots \otimes \HH^0\big(X, \OO(n_sD_s)\big) \to   \HH^0\big(X, \OO(n_1D_1+\cdots+n_sD_s)\big)
		\]
		is surjective. Hence, the multigraded linear series $W=	\SSS(D_1,\ldots,D_s)$ has a decomposable grading and by part (iii) of \autoref{thm-linear-series} the function $F_W$ is a homogeneous polynomial of degree $\kappa(W)$. 
		
		Finally, we would like to remark that the conditions on $C$ and $D_1,\ldots, D_s$ can be relaxed slightly by considering generalizations of Mumford's theorem, see \cite{green1986projective, eisenbud1988determinantal, butler1999global} for details.
	\end{example}

\section*{Acknowledgments}	
The authors thank the reviewer for helpful comments and suggestions.
F.M. was partially supported by the grants G023721N and G0F5921N (Odysseus programme) from the Research Foundation - Flanders (FWO), the KU Leuven grant iBOF/23/064 and the UiT Aurora project MASCOT.

	\bibliographystyle{amsplain} 
	\bibliography{references.bib}

\end{document}